\DeclarePairedDelimiter\floor{\lfloor}{\rfloor}
\newcommand{\R}{\mathbb R}
\newcommand{\Z}{\mathbb Z}
\newcommand{\N}{\mathbb N}
\newcommand{\F}{\mathbb F}
\newcommand{\B}{\mathcal B}
\newcommand{\cG}{\mathcal G}
\newcommand{\be}{{\mathbf e}}
\newcommand{\GG}{{\mathcal{G}}}
\newcommand {\X} {{\mathbb X}}
\newcommand {\Y} {{\mathbb Y}}
\newcommand {\e} {{\varepsilon}}
\newcommand{\bfe}{{\boldsymbol\e}}
\newcommand {\dl} {{\dot{\ell_{\eta}^1}}}
\def\supp{\mathop{\rm supp}}
\def\sgn{\mathop{\rm sign}}
\numberwithin{equation}{section}
\newtheorem*{theorem*}{Theorem}
\newtheorem{theorem}{Theorem}[section]
\newtheorem{lemma}[theorem]{Lemma}
\newtheorem{defi}[theorem]{Definition}
\newtheorem{corollary}[theorem]{Corollary}
\newtheorem{Remark}[theorem]{Remark}
\newtheorem{remark}[theorem]{Remark}
\newtheorem{proposition}[theorem]{Proposition}
\newtheorem{definition}[theorem]{Definition}
\newtheorem{question}{Question}
\newtheorem{example}[theorem]{Example}
\theoremstyle{definition}
\newcommand{\Ba}[1]{\begin{array}{#1}}
	\newcommand{\Ea}{\end{array}}
\newcommand{\Be}{\begin{equation}}
	\newcommand{\Ee}{\end{equation}}
\newcommand{\Bea}{\begin{eqnarray}}
	\newcommand{\Eea}{\end{eqnarray}}
\newcommand{\Beas}{\begin{eqnarray*}}
	\newcommand{\Eeas}{\end{eqnarray*}}
\newcommand{\Benu}{\begin{enumerate}}
	\newcommand{\Eenu}{\end{enumerate}}
\newcommand{\Bi}{\begin{itemize}}
	\newcommand{\Ei}{\end{itemize}}
\newcommand{\BR}{\begin{Remark} \em}
	\newcommand{\ER}{\end{Remark}}
\newcommand{\BE}{\begin{example} \em}
	\newcommand{\EE}{\end{example}}
\newcounter{reg}
\newcounter{regTO}
\newcommand{\bff}{\mathbf 1}
\newcommand{\n}{\mathbf{n}}
\newcommand{\m}{\mathbf{m}}
\newcommand\G{\mathbf{G}}
\newcommand{\K}{\mathbf{K}}
\newcommand{\one}{\mathbf{1}}
\newcommand{\C}{\mathbf{C}}
\title[Greedy-like bases for sequences with gaps]{Greedy-like bases for sequences with gaps}
\author[M. Berasategui]{Miguel Berasategui}
\address{Miguel Berasategui
	\\
	IMAS - UBA - CONICET - Pab I, Facultad de Ciencias Exactas y Naturales \\ Universidad de Buenos Aires \\ (1428), Buenos Aires, Argentina}
\email{mberasategui@dm.uba.ar}
\author[P.\ M. Bern\'a]{Pablo M. Bern\'a}
\address{Pablo M. Bern\'a\\
	Departamento de M\'etodos Cuantitativos\\ CUNEF Universidad\\ Madrid, 28040 Spain.}
\email{pablo.berna@cunef.edu}
\begin{document}
	\subjclass[2020]{41A65, 41A46, 46B15, 46B45.}
	
	\keywords{Non-linear approximation, greedy bases, weak greedy algorithm, quasi-greedy bases.}
	\thanks{The first author was supported by ANPCyT  PICT-2018-04104. The second author was supported by the grant PID2019-105599GB-I00 (Agencia Estatal de Investigación, Spain).
	}
	
	\begin{abstract}
		
		
		In \cite{O2015}, T. Oikhberg introduced and studied variants of the greedy and weak greedy algorithms for sequences with gaps, with a focus on the $\n$-$t$-quasi-greedy property that is based on them. Building upon this foundation, our current work aims to further investigate these algorithms and bases while introducing new ideas for two primary purposes. Firstly, we aim to prove that for $\n$ with bounded quotient gaps, $\n$-$t$-quasi-greedy bases are quasi-greedy bases. This generalization extends the result previously established in \cite{BB} to the context of Markushevich bases and, also, completes the answer to a question from \cite{O2015}. The second objective is to extend certain approximation properties of the greedy algorithm to the context of sequences with gaps and study if there is a relationship between this new extension and the usual convergence.
	\end{abstract}
	\maketitle
	
	\section{Introduction and background}\label{sectionintroduction}
	
	Let $\X$ be a separable, infinite dimensional Banach space over the field $\mathbb F=\mathbb R$ or $\mathbb C$, with dual space $\X^*$. A \textit{fundamental minimal system}  $\B=(\be_i)_{i\in \N}\subset \X$ is a sequence that satisfies the following: 
	\begin{enumerate}[\rm i)\color{black}]
		\item $\X=\overline{[\be_i\,\colon\, i\in\N]}$;
		\item there is a (unique) sequence $\B^*=(\be_i^*)_{i=1}^{\infty}\subset \X^*$ of biorthogonal functionals, that is, $\be_k^*(\be_i)=\delta_{k,i}$ for all $k,i\in\N$.
	\end{enumerate}
	If $\B$ verifies the above conditions and 
	$$
	\be_i^*(x)=0\qquad \forall i\in\N\Longrightarrow x=0 \qquad\text{(totality)},
	$$
	we say that $\B$ is a \emph{Markushevich basis}. If there is also a positive constant $\C$ such that
	$$\Vert S_m(x)\Vert \leq \C \Vert x\Vert \qquad\forall x\in \X,\,\forall m\in \N,$$ where $S_m$ is the $m$th partial sum $\sum_{i=1}^{m}\be_i^*(x)\be_i$, we say that $\B$ is a \textit{Schauder basis}. Its basis constant $\K_b$ is the minimum $\C$ for which this inequality holds. If $\K_b=1$, the Schauder basis is \emph{monotone}, and it is \emph{bimonotone} if $\K_b=1$ and $\Vert x-S_m(x)\Vert \le  \Vert x\Vert$ for all $x\in\X$ and $m\in \N$. \\
	If there is $\C>0$ such that 
	$$\Vert P_A(x)\Vert \leq \C \Vert x\Vert \qquad\forall x\in \X,\, \forall A\subset\N: |A|<\infty,$$ 
	where $P_A=P_{\B,A}$ is the projection on $A$ with respect to $\B$ (that is $P_A(x)= \sum_{i\in A}\be_i^*(x)\be_i$), we say that $\B$ is \emph{suppression unconditional}. The suppression unconditionality constant $\C_{su}$ is the minimum $\C$ for which the above holds. Equivalently (though not necessarily with the same constant), $\B$ is \emph{unconditional} if 
	$$
	\|\sum_{j\in \N}a_j \be_j^*(x)\be_j\|\le \C \|x\|,\quad \forall x\in \X,\,\forall (a_j)_{j\in\N}\subset \F: |a_j|\le 1, \forall j\in \N,
	$$
	for some $\C>0$.\\
	Hereinafter, by a \emph{basis} for $\X$ we mean a fundamental minimal system $\B$ such that both $\B$ and $\B^*$ are semi-normalized, that is 
	$$
	0<\inf_{i\in \N}\min\{\|\be_i\|,\|\be_i^*\|\}\le \sup_{i\in \N}\max\{\|\be_i\|,\|\be_i^*\|\}<\infty. 
	$$
	We will use $\B$ to denote a basis, and we define positive constants $\alpha_1, \alpha_2, \alpha_3$ as follows: 
	$$
	\alpha_1:=\sup_{i\in\N}\|\be_i\|, \qquad\alpha_2:=\sup_{i\in\N}\|\be_i^*\|,\qquad\text{and}\qquad \alpha_3:=\sup_{i\in \N}\|\be_i\|\|\be_i^*\|.
	$$
	
	In 1999, S. V. Konyagin and V. N. Temlyakov introduced the Thresholding Greedy Algorithm (TGA), which is one of the most important algorithms in he field of non-linear approximation, and has been studied by researchers such as F. Albiac, J. L. Ansorena, S. J. Dilworth, G. Garrigós, E. Hernández, N. J. Kalton, D. Kutzarova, V. N. Temlyakov and P. Wojtaszczyk, among others.  The algorithm essentially chooses for each $x\in \X$ the largest coefficients in modulus with respect to a basis. 
	Here, we consider a relaxed version of this algorithm introduced by V. N. Temlyakov in \cite{Tem2}.  Fix $t\in (0,1]$.
	We say that a set $A(x,t):=A$ is a $t$-\textbf{greedy set} for $x\in\X$ if
	$$\min_{i\in A}\vert\be_i^*(x)\vert\geq t \max_{i\not\in A}\vert\be_i^*(x)\vert.$$
	
	\noindent A $t$-\textbf{greedy sum} of order $m$ (or an $m$-term $t$-greedy sum) is the projection
	$$\mathbf{G}_m^t(x)=\sum_{i\in A}\be_i^*(x)\be_i,$$
	where $A$ is a $t$-greedy set of cardinality $m$. The collection $(\mathbf{G}^t_m)_{m=1}^\infty$ is called the \textbf{Weak Thresholding Greedy Algorithm} (WTGA) (see \cite{Tem,Tem2}), and we denote by $\mathcal G_m^t$ the collection of $t$-greedy sums $\G^t_m$ with $m\in\mathbb N$. Also, we denote by $\GG(x,m,t)$ the set of all $t$-greedy sets for $x$ of cardinality $m$. \\
	If $t=1$, we talk about greedy sets and greedy sums $\G_m$ (see \cite{KT}).\\

	Different types of convergence of these algorithms have been studied in several papers, for instance \cite{DKK2003, DKKT, KT}. For $t=1$, a central concept in these studies is the notion of quasi-greediness (\cite{KT}). 
	
	\begin{defi}
		We say that $\mathcal B$ is quasi-greedy if there exists a positive constant $\mathbf C$ such that
		$$\Vert \mathbf{G}_m(x)\Vert \leq \mathbf C\Vert x\Vert,\; \forall x\in\mathbb X, \forall m\in\mathbb N.$$
	\end{defi}

	The relation between quasi-greediness and the convergence of the algorithm was given by P. Wojtaszczyk in \cite{Wo}: a basis is quasi-greedy if and only 
	$$\lim_n \G_n(x)=x,\; \forall x\in\X.$$\\
	In 2015, T. Oikhberg introduced and studied a variant of the WTGA where only the $t$-greedy sums with order in a given strictly increasing sequence of positive integers $\mathbf{n}=(n_k)_{k=1}^\infty$ are considered  \cite{O2015}.  Oikhberg's central definition is as follows: given $\mathbf{n}=(n_k)_{k=1}^\infty\subset \N$ a strictly increasing sequence $n_1<n_2<...$, a basis $\mathcal B$ is $\n$-$t$-quasi-greedy if
	\begin{eqnarray}\label{qO}
		\lim_k \G_{n_k}^t(x)=x,
	\end{eqnarray}
	for any $x\in\X$ and any choice of $t$-greedy sums $\G_{n_k}^t(x)$.\\
	Of course, if the basis is quasi-greedy, it is $\n$-quasi-greedy for any sequence $\n$; moreover, it is also $\n$-$t$-quasi greedy for all $0<t\le 1$ (see \cite[Theorem 2.1]{O2015}, \cite[Lemmas 2.1, 2.3]{KT2002}, \cite[Proposition 4.5]{DKSWo2012}, \cite[Lemma 2.1, Lemma 6.3]{DKO2015}, and Lemma~\ref{lemmaQG}). The reciprocal is false as \cite[Proposition 3.1]{O2015} shows. \\
	
	The study of $\n$-$t$-quasi-greedy bases can be of interest for the following reasons: first, as pointed out in \cite[Questions 2,3]{O2015}, there are classical spaces that do not have a quasi-greedy basis -  e.g., $C[0,1]$, see \cite{DKKT} - or a uniformly bounded quasi-greedy basis - e.g., $L_1[0,1]$ -, see \cite{DST2012} -  so one may study whether it is possible to obtain bases (or uniformly bounded bases) where the greedy algorithm at least converges through a subsequence. 
	Second, and in a more general setting, the study of $\n$-$t$-quasi greedy bases sheds light on general properties of the convergence of the greedy algorithm, advancing the theory on the TGA. For example, quasi-greedy bases are the weakest bases for which the greedy sums converge for all $x\in \X$. If we can find the sequences for which $\n$-quasi-greedy bases are quasi-greedy, we obtain formally weaker conditions that guarantee said convergence, and a  method that may simplify the construction of quasi-greedy bases with properties of interest, as well as the determination of whether a basis is quasi-greedy.  A partial result in this direction was proven in \cite[Proposition 4.1]{O2015}, for sequences that are \emph{crude bases} (see \cite{O2015}). In this work, we make further progress on this front, completing the characterization of the sequences for which the aforementioned implication holds - which is based on the following classification. 
	\begin{definition}\label{definitiondifferentgaps}
		Let $\n=(n_k)_{k\in \N}$ be a strictly increasing sequence of natural numbers. The \emph{quotient gaps} of the sequence are the quotients $\frac{n_{k+1}}{n_k}$, when $n_{k+1}>n_{k}+1$, and we say that $\n$ has arbitrarily large quotient gaps if
		$$\limsup_{k\rightarrow+\infty}\frac{n_{k+1}}{n_k}=+\infty.$$
		
		\noindent Alternatively, for $l\in\mathbb N_{>1}$, we say that $\n$ has $l$-bounded quotient gaps if it has gaps - i.e., $\n\not=\N$ - and 
		$$\frac{n_{k+1}}{n_k}\leq l,$$
		for all $k\in\mathbb N$, and we say that it has bounded quotient gaps if it has $l$-bounded quotient gaps for some natural number $l\ge 2$.
	\end{definition}
	In \cite[Proposition 3.1]{O2015}, it was proven that if $\n$ has arbitrarily large quotient gaps, there are $\n$-$t$-quasi greedy bases that are not quasi-greedy; in fact, the construction gives Schauder bases with these properties. In the other direction, the author asked whether  $\n$-$t$ quasi-greedy bases that are not quasi-greedy are possible possible for some sequences that grow exponentially or even more slowly (\cite[Question 6.4]{O2015}). A partial result was obtained in \cite[Theorem 5.2]{BB}, where it was shown that the answer is negative for Schauder bases. However, the general question remained open; the main obstacle was that some the techniques used in \cite{O2015} and \cite{BB} to prove the previously mentioned partial results seem to rely crucially on further conditions on either the sequence or the basis, and thus appear not suitable for the general case. In this work, we combine recent results from \cite{AAB2023} and \cite{BB2} and use a threshold property from \cite{DKKT} to complete the characterization: $\n$-$t$-quasi-greediness entails quasi-greediness if and only if $\n$ has bounded quotient gaps. \\
	In addition to the study of $\n$-$t$-quasi-greedy bases, we continue the general study of the greedy algorithm for sequences with gaps, extending to our context some of the notions of greedy-type bases, with an eye on either finding formally weaker characterizations of well-known greedy-like bases or - alternatively - finding cases where it is possible to have only a weaker form of convergence of a greedy-like algorithm through a  subsequence. \\
	
	This paper is organized as follows:  in Section \ref{sectionQGgaps}, we study $\n$-$t$-quasi-greedy bases  and prove our main result, Theorem~\ref{theoremnQGboundedgaps->QG!!!}. In Section \ref{sectionbidemocracy}, we define $\n$-bidemocracy and study duality of $\n$-$t$-quasi-greedy bases. In Sections~\ref{sectionuncond} and~\ref{sectionsemi}, we study the extensions of unconditionality and semi-greediness to our context, respectively. In Section \ref{sectionpartially}, we introduce and study $\n$-partially greedy and $\n$-strong partially greedy bases. In Section~\ref{sectionexamples}, we consider a family of examples that is used throughtout the paper. Finally, in Section~\ref{sectionquestions}, we pose some questions for future research.

	We will also use the following notation throughout the paper - in addition to that already introduced: 
	for $A$ and $B$ subsets of $\mathbb N$, we write $A<B$ to mean that  $\max A<\min B$. If $m\in \mathbb N$, we write $m < A$ and $A < m$ for $\{ m\} < A$ and  
	$A <\{ m\}$ respectively, and we use the symbols ``$>$'', ``$\ge$'' and ``$\le$'' similarly.\\
	Also, $A\cupdot B$ means the union of $A$ and $B$ with $A\cap B=\emptyset$, and $\N_{>k}$ means the set $\N\setminus \lbrace 1,\dots,k\rbrace$. \\
	
	For $A\subset\N$ finite, $\Psi_A$ denotes the set of all collections of sequences $\bfe = (\e_i)_{i\in A}\subset \mathbb F$ such that $\vert \varepsilon_n\vert=1$ and
	$$
	\bff_{\bfe A}[\mathcal B,\X]:=\bff_{\bfe A}=\sum_{i\in A}\e_i \be_i.
	$$
	If $\bfe\equiv 1$, we just write $\bff_A$. Also, every time we have index sets $A\subset B$ and $\bfe\in \Psi_B$, we write $\bff_{\bfe A}$ considering the  natural restriction of $\bfe$ to $A$, with the convention that $\bff_{\bfe A}=0$ if $A=\emptyset$. \\
	As usual, by $\supp{(x)}=\supp_{\B}(x)$ we denote the support of $x\in \X$, that is, the set $\{i\in \mathbb{N}: \be_i^*(x)\not=0\}$, and $P_A$ with $A$ a finite set denotes the projection operator, that is,
	$$P_A(x)=\sum_{i\in A}\be_i^*(x)\be_i,$$
	as before with the convention that the sum is zero if $A=\emptyset$.  Also, for $x\in \X$, $\varepsilon(x)$ denotes de sequence of signs $(\sgn(\be_i^*(x))_{i\in \N}$, where $\sgn(0):=1$.
	
	Although T. Oikhberg defined $\n$-$t$-quasi-greedy bases using the condition \eqref{qO}, for our purposes we use the following  definition - which is equivalent by \cite[Theorem 2.1]{O2015}.
	
	\begin{defi}\label{definitionqg}
		Let $\n=(n_k)_{k=1}^\infty$ be a strictly increasing sequence of natural numbers. We say that $\mathcal B$  is $\mathbf{n}$-$t$-\textbf{quasi-greedy} if there exists a positive constant $\C$ such that
		\begin{eqnarray}\label{q}
			\Vert \mathbf{G}^t_n(x)\Vert\leq \C\Vert x\Vert,\; \forall x\in\mathbb X, \forall \G_n^t(x)\in\mathcal G_n^t(x), \forall n\in\mathbf{n},
		\end{eqnarray}
		where $\mathcal G_n^t(x)$ denotes the set of all $n$-term $t$-greedy sums of $x$. \\
		Alternatively, we say that $\mathcal B$ is $\n$-$t$-\textbf{suppression-quasi-greedy} if there exists a positive constant $\C$ such that 
		\begin{eqnarray}\label{sq}
			\Vert x-\mathbf{G}^t_n(x)\Vert \leq \C\Vert x\Vert,\; \forall x\in\mathbb{X}, \forall \G_n^t(x)\in\mathcal G_n^t(x), \forall n\in\n.
		\end{eqnarray}
		We denote by $\C_{q,t}$ and $\C_{sq,t}$ the smallest constants verifying \eqref{q} and \eqref{sq}, respectively, and we say that $\mathcal B$ is $\C_{q,t}$-$\n$-$t$-quasi-greedy and $\C_{sq,t}$-$\n$-$t$-suppression-quasi-greedy.
	\end{defi}
	
	While a sequence $\n$ has gaps if and only if $\n\not=\N$, for convenience we will allow $\n=\N$ in our definitions and statements unless otherwise specified. 
	
	\begin{remark}
		For $\n=\mathbb N$ and $t=1$, we recover the classical definitions of quasi-greediness (see \cite{KT}).\end{remark}
	
	We will use the following notation for $\n$-$t$-quasi-greedy bases:
	\begin{itemize}
		\item If $\n=\mathbb N$, we say that $\mathcal B$ is $\mathbf C_{q,t}$-$t$-quasi-greedy (resp. $\mathbf C_{sq,t}$-$t$-suppression-quasi-greedy).
		\item If $t=1$, we say that $\mathcal B$ is $\mathbf C_{q}$-$\n$-quasi-greedy (resp. $\mathbf C_{sq}$-$\n$-suppression-quasi-greedy).
		\item If $t=1$ and $\n=\mathbb N$, we say that $\mathcal B$ is $\mathbf C_{q}$-quasi-greedy (resp. $\mathbf C_{sq}$-suppression-quasi-greedy).
		\item If $\n$ and $\m$ are two sequences and $\B$ is both $\n$-$t$-QG and $\m$-$t$-QG, we write $\C_{sq,t,\n}$ (or $\C_{q,t,\n}$) and $\C_{sq,t,\m}$ (resp. $\C_{q,t,\m}$) to distinguish them, also with the convention that we do not write $t$ when $t=1$. 
	\end{itemize}

	Finally, we let 
	\begin{align*}
		\kappa:=\begin{cases}
			1 & \text {if } \F=\R,\\
			2 & \text{if }\F=\mathbb{C}.
		\end{cases}
	\end{align*}

	\section{$\n$-Quasi-greedy bases.}\label{sectionQGgaps}
	
	In this section, we continue the study of $\n$-$t$-quasi-greedy bases and prove our main result, which shows that $\n$-quasi-greedy bases are quasi-greedy if $\n$ has bounded quotient gaps. Additionally, we obtain a result for $\n$-$t$-quasi-greedy bases when $\n$ has arbitrarily large quotient gaps, which gives a partial answer to \cite[Question 6.4]{O2015}. A key property in our study is thresholding boundedness, first introduced in \cite{DKK2003}.

	\begin{definition}\label{definitionthresholdingbounded}
		Let $\B$ be a basis for a Banach space $\X$ and define 
		$$
		\mathcal{Q}:=\{x\in \X: \sup_{i\in \N}|\be_i^*(x)|\le 1\}
		$$
		and, for each $0<t\le  1$ and each $x\in Q$, 
		$$
		A(x,a):=\{i\in \N: |\be_i^*(x)|\ge a\}. 
		$$
		We say that $\B$ is \emph{thresholding bounded} if, for each $0<t\le 1$, there is $\C_t>0$ such that 
		$$
		\|P_{A(x,t)}(x)\|\le \C_t\|x\|\qquad\forall x\in Q. 
		$$
		The function $\theta$ is defined on $(0,1]$ so that for each $0<t\le 1$, $\theta(t)$ is the minimum $\C_t$ for which the above inequality holds. We also define $\theta_c(t)$ as the minimum $\K_t$ such that 
		$$
		\|x-P_{A(x,t)}(x)\|\le \K_t\|x\|\qquad\forall x\in Q. 
		$$
	\end{definition} 
	Note that - by scaling - $\B$ is quasi-greedy if and only if $\theta$ (equivalently, $\theta_c$) is bounded (see for example \cite{DKK2003}).

	\begin{remark}\rm 
		It is known that $\theta$ is non-increasing \cite[Proposition 4.1]{DKK2003}. We note that the same holds for $\theta_c$: Indeed, given $x\in \mathcal{Q}$ and $0<t_1<t_2\le 1$, we have
		\begin{align*}
			\left\Vert x-P_{A(x,t_2)}(x)\right\Vert=&\frac{t_2}{t_1}\left\Vert \frac{t_1x}{t_2} -P_{A\left(\frac{t_1x}{t_2}  ,t_1\right)}\left(\frac{t_1x}{t_2} \right)\right\Vert \le \frac{t_2}{t_1}\theta_c\left(t_1\right)\left\Vert \frac{t_1x}{t_2}\right\Vert =\theta_c\left(t_1\right)\|x\|.
		\end{align*}
	\end{remark}

	For the proof of Theorem~\ref{theoremnQGboundedgaps->QG!!!}, it is convenient to define two more variants of $\theta$ and $\theta_c$ as follows: Let 
	\begin{align*}
		\mathcal{Q}_{0}:=&\left\lbrace x\in \mathcal{Q}: |\be_j^*(x)|\not=|\be_k^*(x)|\forall k, j\in \supp(x): k\not=j \right\rbrace. 
	\end{align*}
	Now given a thresholding bounded basis $\B$, let $\vartheta$ and $\vartheta_c$ be the minima defined as $\theta$ and $\theta_c$ but in terms of elements of $\mathcal{Q}_{0}$ instead of $\mathcal{Q}$. Clearly $\vartheta\le \theta$ and $\vartheta_c\le \theta_c$, and both $\vartheta$ and $\vartheta_c$ are non-increasing for the same reasons that hold for $\theta$ and $\theta_c$ respectively. Also, by density, scaling and a standard small perturbation argument, $\B$ is quasi-greedy if and only if $\vartheta$ (equivalently, $\vartheta_c$) is bounded. 
	\begin{remark}\rm \label{remarkwhyQ0}
		Note that if $x\in \mathcal{Q}_0$, $A\in \cG\left(x,m,1\right)$ for some $m\in \N$, and there is $j\not\in A$ such that $\be_j^*(x)\not=0$, then there is $0<t\le 1$ such that $A=A(x,t)$. This property - which does not hold for elements of $\mathcal{Q}$ in general - will allow us to simplify the proof of Theorem~\ref{theoremnQGboundedgaps->QG!!!}. 
	\end{remark}
	
	\begin{remark} \rm \label{remarkQGtheta}
		Note also that for all $0<t\le 1$, 
		\begin{align*}
			&\theta(t)\le 1+\theta_c(t);&&\theta_c(t)\le 1+\theta(t);\\
			&\vartheta(t)\le\ 1+\vartheta_c(t);&&\vartheta_c(t)\le 1+\vartheta(t);
		\end{align*}
	\end{remark}
	It was proven in \cite[Proposition 4.5]{DKK2003} that thresholding boundedness is equivalent to near unconditionality, a property introduced by Elton in 1978 \cite{Elton1978}. 
	
	\begin{definition}\label{definitionnearlyuncond}
		A basis $\B$ be a basis for a Banach space $\X$  \emph{nearly unconditional} if, for each $0<t\le 1$, there is $\C_t>0$ such that 
		$$
		\|P_B(x)\|\le \C_t\|x\|\qquad\forall x\in Q\;\forall B\subset A(x,t). 
		$$
		The near unconditionality function $\phi$ is defined on $(0,1]$ so that for each $0<t\le 1$, $\phi(t)$ is the minimum $\C_t$ for which the above inequality holds.  
	\end{definition}
	
	Recently, it has be shown that near unconditionality and thresholding boundedness are also equivalent to near truncation quasi-greediness \cite[Theorem 3.4]{AABBL2021b} and to quasi-greediness for largest coefficients (\cite[Theorem 2.6]{AAB2023}. We will use the latter equivalence in the proof of Theorem~\ref{theoremnQGboundedgaps->QG!!!}.
	
	\begin{definition}\cite[Definition 4.6 ]{AABW}\label{definitionnQGlc} We say that $\mathcal B$ is quasi-greedy for largest coefficients if there exists a positive constant $\C$ such that
		\begin{eqnarray}\label{ql}
			\|\bff_{\bfe A}\|\le \C\|\bff_{\bfe A}+x\|
		\end{eqnarray}
		for every $A\subset \N$ finite, $\bfe \in \Psi_A$, and all $x\in \mathcal{Q}$ such that $\supp{(x)}\cap A=\emptyset$. The smallest constant verifying \eqref{ql} is denoted by $\C_{ql}$ and we say that $\B$ is $\C_{ql}$-quasi-greedy for largest coefficients. 
	\end{definition}
	
	Combining the aforementioned equivalence with \cite[Proposition 6]{BB2} we immediately obtain the following result. 
	\begin{corollary}\label{corollary: nQGbounded->NU} Let  $\B$ be a basis for a Banach space $\X$, and $\n$ a sequence with bounded quotient gaps. If $\B$ is $\n$-quasi-greedy, it is thresholding bounded. 
	\end{corollary}
	To further simplify the proof of Theorem~\ref{theoremnQGboundedgaps->QG!!!}, we give first some auxiliary results that will allow us to replace a sequence $\n$ with one that is more suitable to that end. Given a sequence $\n$ and $j\in \N$, following \cite{O2015} by $j*\n$ we denote the only strictly increasing sequence $\m=(m_k)_{k\in \N}$ with the property that for each $m\in \N$, there is $k\in \N$ such that $m=m_k$ if and only if $m$ can be written as a sum 
	$$m=n_{k_1}+\dots+n_{k_{j_1}},$$ where $1\le j_1\le j$ and $k_1,\dots, k_{j_1}$ are any  (possibly repeated) natural numbers.
	\begin{lemma}\label{lemmasimplify}Let $\B$ be a $\C_{sq,t}$-$\n$-QG basis for a Banach space $\X$. Suppose $x\in \X$ and $A$ is a $t$-greedy set for $x$ with $|A|\in m*\n$ . Then 
		\begin{align*}
			\|x-P_A(x)\|\le& \C_{sq,t}^m\|x\|. 
		\end{align*}
	\end{lemma}
	\begin{proof}
		This follows at once from the proof of \cite[Proposition 4.1]{O2015}
	\end{proof}

	\begin{lemma}\label{lemma2bg}Let $l\in \N_{\ge 2}$ and let $\n$ be a sequence with $l$-bounded quotient gaps. Then either $l*\n$ has $2$-bounded quotient gaps or $l*\n=\N$. 
	\end{lemma}
	\begin{proof}
		Suppose $l*\n\not=\N$, pick $1\le l_0\le l$ and choose $(k_j)_{1\le j\le l_0}$  a finite sequence of natural numbers with the property that either $l_0=1$ or $k_{j}\le k_{j+1}$ for all $1\le j\le l_0-1$. Let $m=\sum_{j=1}^{l_0}n_{k_j}$. We have to find $m'\in l*\n$ such that $m<m'\le 2m$. 
		\begin{itemize}
			\item If $l_0=1$, then trivially $2m=2 n_{k_1}\ge m':= n_{k_1}+n_{k_1}>n_{k_1}=m$. 
			\item If $1<l_0<l$, then 
			\begin{align*}
				2m=&2\sum_{j=1}^{l_0}n_{k_j}> m':=n_{k_1}+\sum_{j=1}^{l_0}n_{k_j}>\sum_{j=1}^{l_0}n_{k_j}=m.
			\end{align*}
			\item If $l_0=l>1$, and $k_1<k_{l}$, then 
			\begin{align*}
				2m=&2\sum_{j=1}^{l}n_{k_j}=2 n_{k_1}+2\sum_{j=2}^{l}n_{k_j}>m':= n_{k_{l}}+\sum_{j=2}^{l}n_{k_j}>\sum_{j=1}^{l}n_{k_j}=m. 
			\end{align*}
			\item If $l_0=l>1$ and $k_1=k_{l}$, then $k_j=k_l$ for all $1\le j\le l$. Hence, 
			\begin{align*}
				2m=&2\sum_{j=1}^{l}n_{k_j}=2l n_{k_l}> m':=n_{k_{l+1}}+(l-1)n_{k_l}>\sum_{j=1}^{l}n_{k_j}=m. 
			\end{align*}
			As we have considered all possible cases, the proof is complete. 
		\end{itemize}
	\end{proof}
	\begin{corollary}\label{corollarysimpler}Let $\n$ be a sequence with $l$-bounded quotient gaps, and $\B$ an $\n$-quasi-greedy basis for a Banach space $\X$. There is a sequence $\m=(m_k)_{k\in \N}$ with  $m_1=1$ such that $\B$ is $\m$-quasi-greedy with $\C_{sq, \m}\le \max\{ 1+\alpha_1\alpha_2 (n_1-1),\C_{sq,\n}^l\}$, and either $\m$ has $2$-bounded quotient gaps or $\m=\N$. 
	\end{corollary}
	\begin{proof}
		By Lemma~\ref{lemmasimplify}, $\B$ is $l*\n$-QG with $\C_{sq, l*\n}\le \C_{sq,\n}^l$, and by Lemma~\ref{lemma2bg}, either $l*\n$ has $2$-bounded quotient gaps or $l*\n=\N$. If $n_1=1$, there is nothing else to prove. If $n_1>1$,  let $\m$ be the (strictly increasing) sequence obtained from $l*\n$ by adding all natural numbers smaller than $n_1$. Then it is immediate that if $\m\not=\N$, then $\m$ has $2$-bounded quotient gaps. Given that
		\begin{align*}
			\|x-P_A(x)\|\le (1+(n_1-1)\alpha_1\alpha_2)\|x\|
		\end{align*}
		for every $A\subset \N$ with $|A|<n_1$, we conclude that $\B$ is $\m$-QG with constant as in the statement. 
	\end{proof}

	Now we prove our main result. 
	\begin{theorem}\label{theoremnQGboundedgaps->QG!!!}
		Let $\n$ be a sequence with bounded quotient gaps. If $\B$ is an $\n$-quasi-greedy Markushevich basis for a Banach space $\X$, it is quasi-greedy. 
	\end{theorem}
	\begin{proof}
		By Corollary~\ref{corollarysimpler}, we may assume that $\n$ has $2$-bounded quotient gaps and $n_1=1$.\\
		Suppose, to obtain a contradiction, that $\B$ is not quasi-greedy, so 
		\begin{align*}
			\vartheta_c\left(t\right)&\xrightarrow[t\to 0]{}\infty.
		\end{align*}
		By Corollary~\ref{corollary: nQGbounded->NU}, $\B$ is thresholding bounded.  We have the following:

		\noindent\emph{\textbf{Claim: }}There is $d_1>0$ such that 
		\begin{align}
			\vartheta_{c}(t)\le d_1\left(1+\left(\log\left(\frac{1}{t}\right)\right)^{d_1}\right) \quad\forall 0<t\le 1. \label{paso1}
		\end{align}
		
		To prove the claim first choose $0<t_0<e^{-1}$ so that $\vartheta_c\left(t_0\right)\ge \C_{sq}^{2}$. Fix $0<t\le t_0$, choose $x\in \mathcal{Q}_0$, and set $A:=A\left(x,t^2\right)$. To find an upper bound for $\|x-P_A(x)\|$, we may assume $0\not=P_A(x)\not=x$. 
		Now we proceed by case analysis: \\
		\begin{enumerate}[\rm $\text{Case}$ 1.]
			\item \label{Case1} If $|A|\in \n$,  then by our choice of $t_0$ and the fact that $\vartheta_c$ is non-increasing, we have $\|x-P_A(x)\|\le \C_{sq}\|x\|\le \vartheta_c(t)\|x\| $.  
			\item \label{Case2} If there are $k_1, k_2\in \N$ such that $|A|=n_{k_1}+n_{k_{2}}$, then by the above considerations and  Lemma~\ref{lemmasimplify}, 
			\begin{align*}
				\|x-P_A(x)\|\le& \C_{sq}^{2}\|x\|\le \vartheta_c\left(t\right)\|x\|. 
			\end{align*}
			\item \label{Case3}If neither of the above cases holds, define
			\begin{align*}
				&k_0:=\max_{k\in \N}\{n_k<|A|\}, 
			\end{align*}
			choose $B\subset A$ so that $B\in \cG\left(x, n_{k_0},1\right)$, and let $m_1:=|A|-|B|$. Note that $1\le m_1<n_{k_0}$. Choose $D_1\subset B$ so that $D_1\in \cG\left(x, m_1, 1\right)$ and let   $D_2:=A\setminus B$. Note that $D_{2}\in \cG\left(x-P_{B}(x),m_1,1\right)$. For $j=1,2$, set
			\begin{align*}
				&b_j:=\max_{k\in D_j}\left\vert \be_k^*\left(x\right)\right\vert;&& a_j:=\min_{k\in D_j}\left\vert \be_k^*\left(x\right)\right\vert. 
			\end{align*}
			It follows by construction that
			\begin{align*}
				&t^2\le a_2\le b_2=\max_{k\in D_{2}=A\setminus B}\left\vert \be_k^*(x)\right\vert \le \min_{k\in B}\left\vert \be_k^*(x)\right\vert\le a_1\\
				&\le b_{1}=\max_{k\in A}\left\vert \be_k^*(x)\right\vert=\max_{k\in B}\left\vert \be_k^*(x)\right\vert=\|x\|_{\ell_{\infty}}.
			\end{align*}
			Thus, 
			\begin{align*}
				t^2\le &\min_{k\in A}\left\vert \be_k^*(x)\right\vert \le \frac{\min_{k\in A}\left\vert \be_k^*(x)\right\vert }{\max_{k\in A}\left\vert \be_k^*(x)\right\vert }=\frac{\min_{k\in A}\left\vert \be_k^*(x)\right\vert }{\min_{k\in B}\left\vert \be_k^*(x)\right\vert }\frac{\min_{k\in B}\left\vert \be_k^*(x)\right\vert }{\max_{k\in B}\left\vert \be_k^*(x)\right\vert }\le \frac{a_2}{b_2}\frac{a_1}{b_1}
			\end{align*}
			Hence, there is  $1\le j_0\le 2$ such that
			\begin{align*}
				\frac{a_{j_0}}{b_{j_0}}\ge t. 
			\end{align*}
			\begin{enumerate}[\rm $\text{Case 3}$ a.]
				\item If $j_0=1$, let $x_1:=x-P_{D_1}(x)$. We have 
				\begin{align*}
					&b_1^{-1}x\in \mathcal{Q}_0;&&D_{1}\subset A\left(b_1^{-1} x,t\right).
				\end{align*}
				By Remark~\ref{remarkwhyQ0}, $D_1=A\left(b_1^{-1}x,t_1\right)$ for some $t\le t_1\le 1$, so 
				\begin{align*}
					\left\Vert x_1\right\Vert=&b_1\left\Vert \frac{x}{b_1}-P_{D_1}\left(\frac{x}{b_1}\right)\right\Vert   \le b_1 \vartheta_c\left(t_1 \right)\left\Vert \frac{x}{b_1}\right\Vert \le \vartheta_c\left(t\right)\|x\|. 
				\end{align*}
				Now $A\setminus D_1\in \cG\left(x_1,n_{k_0},1\right)$, so 
				\begin{align}
					&\|x-P_A(x)\|=\|x_1-P_{A\setminus D_1}(x_1)\|\le \C_{sq}\|x_1\|\le \C_{sq}\vartheta_c\left(t\right)\|x\|. \label{3b}
				\end{align}
				\item If $j_0=2$, let $x_1:=x-P_B(x)$. We have
				\begin{align*}
					&\|x_1\|\le \C_{sq}\|x\|;&&b_{2}^{-1}x_1\in \mathcal{Q}_0;&&&D_{2}\subset A\left(b_{2}^{-1}x_1,t\right).
				\end{align*}
				Given that $D_{2}\in \cG\left(b_{2}^{-1}x_1, m_1,1\right)$, as before there is $t\le t_1\le 1$ such that $D_{2}= A\left(b_{2}^{-1}x_1,t_1\right)$, so
				\begin{align*}
					\left \Vert x_1-P_{D_{2}}(x_1)\right\Vert=& b_{2} \left\Vert \frac{x_1}{b_{2}}-P_{D_{2}}\left(\frac{x_1}{b_{2}}\right)\right\Vert\le \vartheta_c\left(t_1\right)\|x_1\|\le  \vartheta_c\left(t\right)\|x_1\|.
				\end{align*}
				Given that $x_1-P_{D_{2}}(x_1)=x-P_A(x)$, a combination of the above estimates gives
				\begin{align}
					\|x-P_A(x)\|\le \C_{sq}\vartheta_c\left(t\right)\|x\|. \label{3c}
				\end{align}
				From the estimates for \ref{Case1} and \ref{Case2}, together with \eqref{3b} and \eqref{3c}, taking supremum  and considering that $0<t\le t_0$ is arbitrary we conclude that 
				\begin{align}
					\vartheta_c\left(t^{2}\right)\le& \C_{sq}\vartheta_c(t)\quad\forall 0<t\le t_0. \label{forclaim3}
				\end{align} 
			\end{enumerate}
		\end{enumerate}

		To complete the proof of the claim, we use a variant of part of the argument of \cite[Proposition 3.4]{AAB2023}. First note that for each $0<t\le t_0$, 
		$$\vartheta_c\left(t^{4}\right)=\vartheta_c\left(\left(
		t^{2}\right)^2\right)\le \C_{sq} \vartheta_c\left(t^2\right)\le \C_{sq}^{2} \vartheta_c\left(t\right).$$
		Inductively, it follows that
		\begin{align}
			\vartheta_c\left(t^{2^n}\right)\le& \C_{sq}^{n}\vartheta_c(t)\quad\forall 0<t\le t_0\forall n\in \N. \label{forlog}
		\end{align}
		Since $\B$ is not quasi-greedy $\C_{sq}>1$ by \cite[Theorem 3.8]{BB}, so there is $d>0$ such that
		\begin{align}
			\C_{sq}=&2^d. \label{eld}
		\end{align}
		Now pick $0<a<t_0^2$, and choose $t_0^2\le t\le t_0$ and $n\in \N$ so that $a=t^{2^n}$. From \eqref{forlog} and \eqref{eld}, considering that $0<t\le t_0<e^{-1}$ and $\vartheta$ is non-increasing, it follows that
		\begin{align*}
			\vartheta_c\left(a\right)=&\vartheta_c\left(t^{2^n}\right)\le \C_{sq}^{n}\vartheta_c(t)=2^{dn}\vartheta_c\left(t\right)=\left(\frac{\log\left(\frac{1}{t^{2^n}}\right)}{\log\left(\frac{1}{t}\right)}\right)^d\vartheta_c(t)\le \vartheta_c\left(t_0^2\right) \left(1+\log^d\left(\frac{1}{a}\right)\right)\\
			\le & d_1\left(1+\log^{d_1}\left(\frac{1}{a}\right)\right),
		\end{align*}
		where 
		\begin{align*}
			d_1:=&\max\left\lbrace d,  \vartheta_c\left(t_0^2\right)\right\rbrace. 
		\end{align*}
		Since $\vartheta$ is non-increasing, $\vartheta_c(t)\le d_1$ for all $t_0^2\le t\le 1$, and our claim is proven. \\
		Now fix $0<t<1$, $x\in \mathcal{Q}_0$, and let $A:=A(x,t)$. As before, we may assume $0\not=P_A(x)\not=x$. We consider the following cases: \\
		\begin{enumerate}[\rm $\text{Case}$ (i) ]
			\item \label{casec1}  If $|A|\in \n$ or $|A|\in 2*\n$, then by  Lemma~\ref{lemmasimplify} 
			\begin{align*}
				\|x-P_A(x)\|\le& \C_{sq}^{2}\|x\|.
			\end{align*}
			\item \label{casec3} Otherwise, as in the proof of \ref{Case3} above, let $k_0:=\max_{k\in \N}\{n_k<|A|\}$, 
			choose $B\subset A$ so that $B\in \cG\left(x, n_{k_0},1\right)$, and let $m_1:=|A|-|B|$.  Let 
			\begin{align*}
				&a:=\min_{j\in A}|\be_j^*(x)|;&&b:=\min_{j\in B}|\be_j^*(x)|. 
			\end{align*}
			Note that $t\le a< b$. There are  two possibilities: \\
			\begin{enumerate}[\rm $\text{Case (ii)}$ .1.] 
				\item If \begin{align}
					\frac{a}{b}\le& \frac{1}{\vartheta_c\left(t\right)},\label{casi1}
				\end{align} 
				choose $D\subset B$ so that $D\in \cG\left(x,n_{k_0}-m_1,1\right)$, and let 
				\begin{align*}
					x_1:=&x-P_{B}(x)+\frac{a}{b}P_{D}(x). 
				\end{align*}
				Given that 
				\begin{align*}
					\left\vert \be_j^*(x_1)\right\vert=&\frac{a}{b} \left\vert \be_j^*(x)\right\vert\ge a \ge \left\vert \be_k^*(x)\right\vert=\left\vert \be_k^*(x_1)\right\vert=\forall j\in D\forall k\not \in A;\\
					\left\vert \be_j^*(x_1)\right\vert=& \left\vert \be_j^*(x)\right\vert\ge a \ge \left\vert \be_k^*(x)\right\vert=\left\vert \be_k^*(x_1)\right\vert=\forall j\in A\setminus B\forall k\not \in A,
				\end{align*}
				it follows that $\left(A\setminus B\right)\cup D\in \cG\left(x_1,n_{k_0},1\right)$. Hence,
				\begin{align*}
					\|x-P_A(x)\|=&\left\Vert x_1-P_{\left(A\setminus B\right)\cup D}(x_1)\right\Vert\le \C_{sq}\|x_1\|.
				\end{align*}
				By Remark~\ref{remarkwhyQ0}, there is $b\le b_1\le 1$ such that $D=A(x,b_1)$. Hence, by \eqref{casi1}, and Remark~\ref{remarkQGtheta} we have
				\begin{align*}
					\|x_1\|\le& \left\Vert x-P_B(x)\right\Vert +\frac{a}{b}\left\Vert P_{D}(x)\right\Vert\le \C_{sq}\|x\|+\frac{\vartheta\left(b_1\right)}{\vartheta_c\left(t\right)}\|x\|\le \C_{sq}\|x\|+\frac{\vartheta\left(t\right)}{\vartheta_c\left(t\right)}\|x\|\\
					\le&\left(\C_{sq}+2\right)\|x\|.
				\end{align*}
				Hence, 
				\begin{align}
					\|x-P_A(x)\|\le& \C_{sq}\left(\C_{sq}+2\right)\|x\|\le 3\C_{sq}^{2}\|x\|.    \label{casec3a}
				\end{align}
				\item If 
				\begin{align*}
					\frac{a}{b}>&\frac{1}{\vartheta_c\left(t\right)}, 
				\end{align*}
				let $x_1:=x-P_B(x)$. We have
				\begin{align*}
					&\|x_1\|\le \C_{sq}\|x\|; &&b^{-1}x_1\in \mathcal{Q}_0;&&&A\setminus B\in \cG\left(b^{-1}x_1, m_1,1\right),
				\end{align*}
				and 
				\begin{align*}
					\left\vert\be_j^*\left(b^{-1}x_1\right)\right\vert\ge \frac{a}{b}\ge \frac{1}{\vartheta_c\left(t\right)}\quad\forall j\in A\setminus B.
				\end{align*}
				By Remark~\ref{remarkwhyQ0}, there is $\left(\vartheta_c(t)\right)^{-1}\le t_1 \le 1$ such that $A\setminus B=A\left(b^{-1}x_1,t_1\right)$, which entails that
				\begin{align*}
					\left\Vert x_1-P_{A\setminus B}\left(x_1\right)\right\Vert=& b\left\Vert \frac{x_1}{b} -P_{A\setminus B}\left(\frac{x_1}{b}\right)\right\Vert  \le \vartheta_c\left(t_1\right)\|x_1\|\le \vartheta_c\left(\frac{1}{\vartheta_c\left(t\right)}\right)\|x_1\|. 
				\end{align*}
				Since 
				\begin{align*}
					x_1-P_{A\setminus B}\left(x_1\right)=x-P_A(x), 
				\end{align*}
				a combination of the above inequalities gives
				\begin{align}
					\|x-P_A(x)\|\le& \C_{sq}\vartheta_c\left(\frac{1}{\vartheta_c\left(t\right)}\right) \label{casec3b}.
				\end{align}
			\end{enumerate}
			As we have studied both possibilities for \ref{casec3}, from  \eqref{casec3b} with \eqref{casec3a} it follows that
			\begin{align*}
				\|x-P_A(x)\|\le& 3\C_{sq}^{2}\left(1+\vartheta_c\left(\frac{1}{\vartheta_c\left(t\right)}\right)\right). 
			\end{align*}
		\end{enumerate}
		Taking supremum and considering that $0<t<1$ is arbitrary, the estimates for \ref{casec1} and \ref{casec3} imply that
		\begin{align*}
			\vartheta_c\left(t\right)\le& 3\C_{sq}^2\left(1+\vartheta_c\left(\frac{1}{\vartheta_c\left(t\right)}\right)\right)\quad \forall 0<t<1,
		\end{align*}
		which together with \eqref{paso1} entails that
		\begin{align*}
			\vartheta_c\left(t\right)\le& 3\C_{sq}^2\left(1+d_1\left(1+\left(\log\left(\vartheta_c\left(t\right)\right)\right)^{d_1}\right)\right)\quad \forall 0<t<1.
		\end{align*}
		Thus,
		\begin{align*}
			1\le \frac{ 3\C_{sq}^2\left(1+d_1\left(1+\left(\log\left(\vartheta_c\left(t\right)\right)\right)^{d_1}\right)\right)}{\vartheta_c\left(t\right)}\xrightarrow[t\to 0]{}0, 
		\end{align*}
		a contradiction.

	\end{proof}
	
	A combination of Theorem~\ref{theoremnQGboundedgaps->QG!!!} and \cite[Proposition 3.1]{O2015} gives the following. 
	\begin{corollary}Let $\n$ be a sequence. The following are equivalent:
		\begin{itemize}
			\item $\n$ has arbitrarily large gaps. 
			\item There is a Markushevich basis that is $\n$-quasi-greedy but not quasi-greedy. 
			\item There is a Schauder basis that is $\n$-quasi-greedy but not quasi-greedy. 
		\end{itemize}
		
	\end{corollary}

	It is known that any quasi-greedy basis is $t$-quasi-greedy for all $0<t\le 1$ (\cite[Lemma 2.1]{DKO2015}), but it is an open question whether the corresponding implication holds for $\n$-QG bases (\cite[Question 4]{O2015}).  A partial answer was  given in \cite[Lemma 3.9]{BB}, whereas for $\n$ with bounded quotient gaps, an affirmative answer follows from \cite[Lemma 2.1]{DKO2015} and Theorem~\ref{theoremnQGboundedgaps->QG!!!}.  
	In the remainder of this section, we obtain a further partial result for $\n$ with arbitrarily large gaps, whereas another such result will be given in Section~\ref{sectionbidemocracy}.  We use the following definitions.

	\begin{definition}\cite{Wo}\label{uccc} We say that $\B$ is $\n$-unconditional for constant coefficients if there is $\C>0$ such that 
		\begin{eqnarray}\label{ucc}
			\|\bff_{\bfe A}\|\le \C\|\bff_{\bfe' A}\|
		\end{eqnarray}
		for all $A\subset \N$ with $|A|\in\n$ and all $\bfe, \bfe'\in \Psi_A$. The smallest constant verifying \eqref{ucc} is denoted by $\mathbf{K}_{u}$ and we say that $\mathcal B$ is $\K_u$-$\n$-unconditional for constant coefficients. If $\n=\mathbb N$, we say that $\mathcal B$ is $\mathbf{K}_u$-unconditional for constant coefficients.
	\end{definition}

	\begin{defi}\cite[Definition 3.1]{BB2}\label{definitionsuperdemo}
		We say that $\mathcal{B}$ is $\n$-superdemocratic if there exists a positive constant $\C$ such that 
		\begin{eqnarray}\label{demo}
			\Vert \mathbf{1}_{\varepsilon A}\Vert \leq \C\Vert \mathbf{1}_{\varepsilon' B}\Vert,
		\end{eqnarray}
		for all $A,B$ with $\vert A\vert\leq \vert B\vert$, $\vert A\vert,\vert B\vert\in\n$ and $\varepsilon\in\Psi_A, \varepsilon'\in\Psi_B$. The smallest constant verifying \eqref{demo} is denoted by $\Delta_s$ and we say that $\mathcal B$ is $\Delta_s$-$\n$-superdemocratic. If \eqref{demo} is satisfied for $\varepsilon\equiv\varepsilon'\equiv 1$, we say that $\mathcal B$ is $\Delta_d$-$\n$-democratic, where $\Delta_d$ is again the smallest constant for which the inequality holds. 
		If $\n=\mathbb N$, we say that $\B$ is $\Delta_d$-democratic and $\Delta_s$-superdemocratic.
	\end{defi}
	\begin{remark}\label{remarkUCC=SUCC}\rm Note that a basis is unconditional for constant coefficients if and only if it is suppression unconditional for constant coefficients (\cite[Remark 3.4]{BBG}), and a basis that is quasi-greedy for largest coefficients has these properties as well (\cite[Lemma 4.7]{AABW}). Also, a basis is superdemocratic if and only if it is democratic and unconditional for constant coefficients. 
	\end{remark}
	
	\begin{proposition}\label{proposition: dem+nqg->ntqg} Let  $\n$ be a sequence, and $\B$ be a basis for a Banach space $\X$ that is $\n$-quasi-greedy. If $\B$ quasi-greedy for largest coefficients and democratic, it is $\n$-$t$-quasi-greedy for all $0<t\le 1$.
	\end{proposition}
	\begin{proof}
		First suppose $\F=\R$. By \cite[Theorem 2.6]{AAB2023}, $\B$ is nearly unconditional, and by Remark~\ref{remarkUCC=SUCC}, it is superdemocratic. Let $\phi:(0,1]\rightarrow [1,\infty)$ and $\Lambda_{s}$ be $\B$'s near unconditionality function and superdemocracy constant respectively. \\
		Now fix $x\in \X$ and $A$ a $t$-greedy set for $x$ with $|A|\in \n$. We may assume that $A$ is not a greedy set for $x$, that $P_A(x)\not=0$ and $P_A(x)\not=x$. Let $B$ be a greedy set for $x$ with $|B|=|A|$. Set  $a:=\max_{i\in  B\setminus A}|\be_i^*(x)|$. Note that $a>0$ and, for every $i\in B\setminus A$ and $j\in A\setminus B$, 
		\begin{align}
			a\ge |\be_i^*(x)|\ge& |\be_j^*(x)\|\ge ta \ge t |\be_i^*(x)|.\label{proposition: dem+nqg->ntqg. General caseflattgreedt}
		\end{align}
		It follows from the above estimate that, for every $E\subset A\setminus B$, 
		\begin{align}
			\left\Vert P_{E}\left(x\right)\right\Vert=&a\left\Vert P_{E}\left(a^{-1}\left(x-P_{B}(x)\right)\right)\right\Vert\le a\phi(t)\left\Vert a^{-1}\left(x-P_{B}(x)\right)\right\Vert=\phi(t)\left\Vert x-P_{B}(x)\right\Vert\nonumber\\
			\le& \phi(t)\C_{sq}\|x\|. \label{proposition: dem+nqg->ntqg. General caseestimatetgreedy}
		\end{align}
		On the other hand, by convexity and superdemocracy, 
		\begin{align}
			\left\Vert P_{B\setminus A}\left(x\right)\right\Vert&\le \max_{\varepsilon\in\Psi_{B\setminus A}}a\|\bff_{\varepsilon, B\setminus A}\|\le a \Delta_s \|\bff_{\varepsilon(x), A\setminus B}\|. \label{proposition: dem+nqg->ntqg. General caseestimategreedy}
		\end{align}
		Choose $x^*\in \X^*$ with $\|x^*\|=1$ so that 
		\begin{align*}
			x^*\left(\bff_{\varepsilon(x), A\setminus B}\right)=&\|\bff_{\varepsilon(x), A\setminus B}\|,
		\end{align*}
		and let 
		\begin{align*}
			D:=&\left\lbrace j\in A\setminus B\colon \be_j^*(x)x^*\left(\be_n\right)\ge 0\right\rbrace.
		\end{align*}
		Using \eqref{proposition: dem+nqg->ntqg. General caseflattgreedt}  and \eqref{proposition: dem+nqg->ntqg. General caseestimatetgreedy} we obtain
		\begin{align*}
			a\|\bff_{\varepsilon(x), A\setminus B}\|=&ax^*\left(\bff_{\varepsilon(x), A\setminus B}\right)=a \sum_{n\in A\setminus B}\sgn\left(\be_n^*(x)\right)x^*\left(\be_n\right)\le a \sum_{n\in D}\sgn\left(\be_n^*(x)\right)x^*\left(\be_n\right)\\
			=& t^{-1}\sum_{n\in D}at \sgn\left(\be_n^*(x)\right)x^*\left(\be_n\right)\le t^{-1}\sum_{n\in D}\be_n^*(x) x^*\left(\be_n\right)\le t^{-1}\left\Vert P_D(x)\right\Vert \\
			\le&t^{-1} \phi(t)\C_{sq}\|x\|.
		\end{align*}
		Combining the above with \eqref{proposition: dem+nqg->ntqg. General caseestimategreedy} we get 
		\begin{align*}
			\left\Vert P_{B\setminus A}\left(x\right)\right\Vert\le& t^{-1}\phi(t)\Delta_s\C_{sq}\|x\|. 
		\end{align*}
		Thus,
		\begin{align*}
			\left\Vert P_{B\cap A}\left(x\right)\right\Vert\le& \left\Vert P_{B\setminus A}\left(x\right)\right\Vert+\|P_B(x)\|\le \left(t^{-1}\phi(t)\Delta_s\C_{sq}+\C_{q}\right)\|x\|. 
		\end{align*}
		Finally, a combination of the above estimate with \eqref{proposition: dem+nqg->ntqg. General caseestimatetgreedy} yields
		\begin{align}
			\|P_A(x)\|\le& \left\Vert P_{B\cap A}\left(x\right)\right\Vert+\left\Vert P_{A\setminus B}\left(x\right)\right\Vert\le \left( \C_{sq}\phi(t)\left(1+\frac{\Delta_s}{t}\right)+\C_q\right)\|x\|.\label{proposition: dem+nqg->ntqg. generalcasentgreedy}
		\end{align}
		This completes the proof of the case $\F=\R$.  If $\F=\mathbb{C}$, pick $x\in \X$ and $A$ as before.
		%
		For each $i\in \N$, let 
		\begin{align*}
			(z_i;z_i^*):=&
			\begin{cases}
				(\sgn{\be_i^*(x)}\be_i,\sgn{\be_i^*(x)}^{-1}\be_i^*)& \text{ if }\be_i^*(x)\not=0,\\
				(\be_i,\be_i^*) & \text{otherwise.} 
			\end{cases}
		\end{align*}
		Since $|z_i^*(z)|=|\be_i^*(z)|$ for all $z\in \X$ and all $i\in \N$, $(z_i)_i$ is also an $\n$-quasi-greedy Markushevich basis for $\X$ with constant $\C_{q}$. Note that $z_i^*(x)=|\be_i^*(x)|$ for all $i\in \N$. Let
		$$
		\mathbb{T}:=\left\lbrace z\in \X: z_i^*(z)\in \R\;\forall i\in \N\right\rbrace.
		$$
		It is routine to check that $\mathbb{T}$ is a Banach space over $\R$ with the norm given by the restriction to $\mathbb{T}$ of the norm on $\X$, and that $(z_i)_{i\in \N}$ is a basis for $\mathbb{T}$ with biorthogonal functionals $(z_i^*\big|_{\mathbb{T}})_{i\in \N}$.  \\
		Since, for every for every $y\in \mathbb{T}$, the coordinates of $y$ with respect to  $(z_i)_{i\in \N}$ are the same as they are when we consider $y$ as an element of $\X$, we have
		$$
		\lim_{\substack{n\to +\infty\\n\in \n}}\|\G_{n}(y)-y\|_{\mathbb{T}}=\lim_{\substack{n\to +\infty\\n\in \n}}\|\G_{n}(y)-y\|=0
		$$
		for all $y\in \mathbb{T}$ and every choice of greedy approximations with $n\in \n$ for all $n$ , so $(z_i)_{i\in \N}$ is an $\n$-quasi-greedy basis for $\mathbb{T}$. For the same reason, the $\n$-quasi-greedy constant of $(z_i)_{i\in \N}$ as a basis for $\mathbb{T}$ is no greater than $\C_q$, and the $t$-greedy sets are also the same whether we consider $y$ as an element of $\X$ or of $\mathbb{T}$. Since $x\in \mathbb{T}$, we can apply the result for real Banach spaces to complete the proof. 
	\end{proof}

	\section{$\n$-bidemocracy}\label{sectionbidemocracy}
	\color{black}
	
	In 2003, S. J. Dilworth et al. (\cite{DKKT}) studied conditions under which the dual basis of a greedy (resp. almost greedy) basis is also greedy (resp. almost greedy). In this context, they introduced the notion of bidemocracy, which we extend in this section. First, we need the notion of the fundamental function of a basis: take $\mathbb Y$ as the subspace of $\mathbb X^*$ spanned by $\mathcal B^*$, and define  $$\mathbf{1}_{\varepsilon A}^*=\mathbf{1}_{\varepsilon A}^*[\mathcal B^*,\mathbb Y]:=\sum_{n\in A} \varepsilon_ne_n^*.$$

	We define the \emph{fundamental function} $\varphi$ \emph{of $\B$} and the \emph{fundamental function} $\varphi^*$ \emph{of $\B^*$} by
	$$\varphi(m)=\varphi[\mathcal B,\X](m):=\sup_{\vert A\vert\leq m, \vert\varepsilon\vert=1}\Vert\mathbf{1}_{\varepsilon A}\Vert,$$
	and
	$$\varphi^*(m)=\varphi[\mathcal B^*, \mathbb Y](m)=\sup_{\vert A\vert\leq m, \vert\varepsilon\vert=1}\Vert\mathbf{1}_{\varepsilon A}^*\Vert.$$

	Using these functions, we say that a basis is bidemocratic if $$\varphi(m)\varphi^*(m)\lesssim m,\, \forall m\in\N.$$
	
	\begin{remark}\rm
		Traditionally, the function $\varphi(m)$ has been defined using $\e\equiv 1$. Our definition is equivalent since 
		$$\sup_{\vert A\vert\leq m}\Vert\one_A\Vert\le \varphi(m) \leq 2\kappa \sup_{\vert A\vert\leq m}\Vert\one_A\Vert.$$
	\end{remark}

	The following result was proven in \cite{DKKT}. 
	
	\begin{theorem}\label{theoremDKKTbi}
		Let $\B$ be a quasi-greedy (resp. unconditional) basis. The following are equivalent:
		\begin{enumerate}[\rm \color{red}i)\color{black}]
			\item $\B$ is bidemocratic.
			\item $\B$ and $\B^*$ are both almost greedy (resp. greedy).
		\end{enumerate}
	\end{theorem}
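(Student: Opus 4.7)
The plan is to follow the duality-based approach of \cite{DKKT}, exploiting the known characterizations almost greedy = quasi-greedy + democratic and greedy = unconditional + democratic.

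For (i) $\Rightarrow$ (ii), assume $\B$ is quasi-greedy (resp. unconditional) and bidemocratic. The first step is to derive democracy of both $\B$ and $\B^*$ from bidemocracy. Given $A$ with $|A|=m$, the duality pairing yields $m=\mathbf{1}_A^*(\mathbf{1}_A)\le \|\mathbf{1}_A^*\|\|\mathbf{1}_A\|\le \varphi^*(m)\|\mathbf{1}_A\|$, and combined with $\varphi(m)\varphi^*(m)\lesssim m$ this forces $\|\mathbf{1}_A\|\gtrsim \varphi(m)$, i.e. $\|\mathbf{1}_A\|\approx \varphi(m)$; the symmetric argument in $\Y$ gives $\|\mathbf{1}_A^*\|\approx \varphi^*(m)$. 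By the Konyagin--Temlyakov characterization \cite{KT}, quasi-greedy plus democratic gives almost greedy (resp. unconditional plus democratic gives greedy), so $\B$ is almost greedy (resp. greedy). The more delicate step is to verify the corresponding property for $\B^*$: in the unconditional case, $\B^*$ is automatically unconditional as a basic sequence of $\Y$, so combined with democracy this makes $\B^*$ greedy; in the quasi-greedy case, one must show $\B^*$ is quasi-greedy, which follows from bidemocracy via the identity $G_m(x^*)(y)=x^*(P_A(y))$, where $A$ is the $m$-greedy set of $x^*$, together with bidemocracy-derived bounds on $\|P_A(y)\|$ for sets $A$ of cardinality $m$.

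For (ii) $\Rightarrow$ (i), assume $\B$ and $\B^*$ are both almost greedy (resp. greedy). Both are democratic, so $\|\mathbf{1}_A\|\approx \varphi(m)$ and $\|\mathbf{1}_A^*\|\approx \varphi^*(m)$ for $|A|=m$. The lower bound $\varphi(m)\varphi^*(m)\gtrsim m$ is immediate from the pairing $\mathbf{1}_A^*(\mathbf{1}_A)=m$. For the upper bound, I fix $A$ with $|A|=m$ and, via Hahn--Banach, take $x^*$ with $\|x^*\|=1$ and $x^*(\mathbf{1}_A)=\|\mathbf{1}_A\|$. I then exploit the almost greedy structure of $\B^*$: applying the $m$-greedy truncation to $x^*$ and using the UL property of $\B^*$ combined with democracy, only a ``large'' part of $x^*$ of cardinality $\lesssim m$ contributes significantly to $x^*(\mathbf{1}_A)$, and this contribution is controlled by $\|x^*\|\cdot\|\mathbf{1}_A^*\|/\varphi^*(m)\approx 1$; the UL property of $\B$ handles the residual small coefficients through the $\min$-side estimate $\min_{i\in A}|\be_i^*(x^*)|\|\mathbf{1}_A\|\lesssim \|P_A(x^*)\|$. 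This yields $\|\mathbf{1}_A\|\lesssim m/\varphi^*(m)$, establishing bidemocracy.

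The main obstacle is the upper bound in (ii) $\Rightarrow$ (i), which requires simultaneously leveraging the greedy structures on $\X$ and on $\X^*$; this is the crux of the DKKT argument. A secondary technical point is establishing the quasi-greediness of $\B^*$ from bidemocracy in (i) $\Rightarrow$ (ii), which requires bounding general projections $P_A$ that are not a priori controlled for non-unconditional bases.
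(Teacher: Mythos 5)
First, a point of comparison: the paper does not prove Theorem~\ref{theoremDKKTbi} at all --- it is quoted as a known result from \cite{DKKT} (``The following result was proven in \cite{DKKT}''). The only material in the paper against which your sketch can be checked is the $\n$-analogue of the hardest ingredient, namely Lemma~\ref{lemmanbidemqg2} and Proposition~\ref{propositiondual2} (bidemocratic $+$ quasi-greedy $\Rightarrow$ dual quasi-greedy), and that comparison exposes a genuine gap in your argument.

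The gap is in your treatment of the quasi-greediness of $\B^*$ in (i)$\Rightarrow$(ii). You propose to use the identity $\G_m(x^*)(y)=x^*(P_A(y))$, where $A$ is the greedy set of $x^*$, ``together with bidemocracy-derived bounds on $\|P_A(y)\|$ for sets $A$ of cardinality $m$.'' No such bounds exist: relative to $y$, the greedy set of $x^*$ is an arbitrary $m$-element set, and a uniform estimate $\|P_A(y)\|\lesssim\|y\|$ over all $m$-sets with constant independent of $m$ is precisely unconditionality, which bidemocratic quasi-greedy bases need not enjoy (there are conditional almost greedy bidemocratic bases). The argument that works --- and that the paper itself carries out in the $\n$-setting --- introduces a \emph{second} set: given $x^*$ with greedy set $B$ and a norming element $x$, one takes $A$ a greedy set \emph{for $x$} with $|A|=|B|$ and splits
$$|P_Bx^*(x)|\le |P_Bx^*(x-P_Ax)|+|x^*(P_Ax)|+|(x^*-P_Bx^*)(P_Ax)|,$$
so that the only projection whose norm is ever needed is $P_Ax$ with $A$ greedy for $x$ (controlled by $\C_q$), while the two cross terms are handled by the pairing estimates of Lemma~\ref{lemmanbidemqg2}: the greediness of $B$ for $x^*$ converts $\max_{i\notin B}|x^*(\be_i)|$ into $\min_{i\in B}|x^*(\be_i)|\le |B|^{-1}\sum_{i\in B}|x^*(\be_i)|\le |B|^{-1}\phi(|B|)\|x^*\|$, producing the factor $\phi^*(|A|)\phi(|B|)/|B|\le\Delta_b$. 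Without this decomposition your step fails. A secondary weakness is (ii)$\Rightarrow$(i): the phrase ``only a large part of $x^*$ of cardinality $\lesssim m$ contributes significantly'' is not yet an argument, and the naive route it suggests --- bounding $\|\bff_A^*\|$ by $\sup_{\|x\|\le1}\sum_{j\le m}a_j(x)\lesssim\sum_{j\le m}1/\phi(j)$ via the min-side UL estimate and democracy of $\B$ alone --- breaks down exactly when $\phi(m)\approx m$, where the sum is $\approx\log m$ but the target $m/\phi(m)$ is $\approx 1$; the democracy of $\B^*$ must genuinely enter, and your sketch does not show how. Finally, a small attribution error: ``quasi-greedy $+$ democratic $\Rightarrow$ almost greedy'' is \cite{DKKT}, not the Konyagin--Temlyakov characterization in \cite{KT}, which concerns greedy $=$ unconditional $+$ democratic.
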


	Here, we define the notion of $\n$-bidemocracy for any sequence $\n$, and study duality of $\n$-$t$-quasi greedy bases. 
	
	\begin{defi}
		We say that a basis $\B$ in a Banach space $\mathbb X$ is $\n$-bidemocratic if there exists a positive constant $\C$ such that
		\begin{eqnarray}\label{bi}
			\varphi(n)\varphi^*(n)\leq \C n,\; \forall n\in\n.
		\end{eqnarray}
		The smallest constant verifying \eqref{bi} is denoted by $\Delta_b$ and we say that $\B$  is $\Delta_b$-$\n$-bidemocratic. If $\n=\mathbb N$, we say that $\B$ is $\Delta_b$-bidemocratic.
	\end{defi}
	
	\begin{remark}\label{remarkdualbidem}\rm 
		Note that $\B^*$ is a basis for $\Y$, then $\B^{**}=(\hat{\be}_i\big|_{\Y})_{i\in \N}\subset \Y^*$, where $\hat{x}\in X^{**}$ is the image of $x\in \X$ via the canonical inclusion $X\hookrightarrow X^{**}$. Hence, $\varphi^{**}(m)\le \varphi(m)$ for all $m$. In particular, this implies that if $\B$ is $\Delta_b$-$\n$-bidemocratic, then $\B^*$ is $\C$-$\n$-bidemocratic with $\C\le \Delta_b$. 
	\end{remark}
	
	\begin{remark}\label{remarkbideml1} \rm Note that if $\B$ is $\n$-bidemocratic and $\varphi(n)\approx n$ for $n\in \n$, then $\varphi^*(n)$ is bounded for $n\in \n$, and so for all $n\in \N$. It follows that $\B^*$ is equivalent to the unit vector basis of $\mathtt{c}_{0}$, so $\B$ is equivalent to the unit vector basis of $\ell_1$ (see \cite[Corollary 5.7]{DKKT}).
	\end{remark}

	We  will show that if $\B$ is an $\n$-bidemocratic and $\n$-quasi-greedy basis, then the dual basis $\B^*$ is $\n$-$t$-quasi-greedy for all $0<t\le 1$. First we prove an auxiliary lemma. 
	
	\begin{lemma}\label{lemmanbidemqg2}Let $\B$ be a basis in a Banach space $\mathbb X$, and let $A$ and $B$ be nonempty sets of positive integers. If $B$ is a $t$-greedy set for $x^*\in \Y$, then 
		\begin{equation}
			|(x^*-P_Bx^*)(P_Ax)|\le t^{-1}\varphi^*(|A|)\varphi(|B|)|B|^{-1}\|x\|\|x^*\|\qquad\forall x\in \X.\label{partt12}
		\end{equation}
		Similarly, if $A$ is a $t$-greedy set for $x\in \X$, then 
		\begin{equation}
			|P_Bx^*(x-P_Ax)|\le t^{-1} \varphi^{*}(|A|)\varphi(|B|)|A|^{-1}\|x\|\|x^*\|\qquad \forall x^*\in \Y.\label{part22}
		\end{equation}
	\end{lemma}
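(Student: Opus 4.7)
The plan is to prove both bounds by the same mechanism, exploiting the fact that the projections $P_A$ and $P_B$ are $w^*$-$w$ compatible in the sense that biorthogonality gives $P_B x^*(P_A x)=\sum_{i\in A\cap B}e_i^*(x)\,x^*(e_i)$. Expanding the first expression,
\[
(x^*-P_Bx^*)(P_Ax)=\sum_{i\in A\setminus B}e_i^*(x)\,x^*(e_i),
\]
and expanding the second,
\[
P_Bx^*(x-P_Ax)=\sum_{j\in B\setminus A}e_j^*(x)\,x^*(e_j).
\]
So the task in each case reduces to bounding a sum of products $e_i^*(x)\,x^*(e_i)$ over an index set disjoint from the greedy set on the other side.

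For \eqref{partt12}, since $B$ is $t$-greedy for $x^*$, every $i\in A\setminus B$ satisfies $|x^*(e_i)|\le t^{-1}\min_{j\in B}|x^*(e_j)|\le t^{-1}|B|^{-1}\sum_{j\in B}|x^*(e_j)|$. Factoring out this common bound gives
\[
|(x^*-P_Bx^*)(P_Ax)|\le t^{-1}|B|^{-1}\bigl(\sum_{j\in B}|x^*(e_j)|\bigr)\bigl(\sum_{i\in A\setminus B}|e_i^*(x)|\bigr).
\]
The two remaining sums are then controlled by the fundamental functions: choose unimodular scalars $(\epsilon_j)_{j\in B}$ with $\epsilon_j x^*(e_j)=|x^*(e_j)|$ and $(\eta_i)_{i\in A\setminus B}$ with $\eta_i e_i^*(x)=|e_i^*(x)|$, so that
\[
\sum_{j\in B}|x^*(e_j)|=x^*\bigl(\sum_{j\in B}\epsilon_j e_j\bigr)\le \|x^*\|\,\varphi(|B|),
\]
and
\[
\sum_{i\in A\setminus B}|e_i^*(x)|=\bigl(\sum_{i\in A\setminus B}\eta_i e_i^*\bigr)(x)\le \|x\|\,\varphi^*(|A\setminus B|)\le \|x\|\,\varphi^*(|A|).
\]
Plugging these in yields \eqref{partt12}.

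The second inequality \eqref{part22} is proved by the symmetric argument: use that $A$ is $t$-greedy for $x$ to get $|e_j^*(x)|\le t^{-1}|A|^{-1}\sum_{i\in A}|e_i^*(x)|$ for every $j\in B\setminus A$, factor this out, and bound $\sum_{i\in A}|e_i^*(x)|\le \varphi^*(|A|)\|x\|$ and $\sum_{j\in B\setminus A}|x^*(e_j)|\le \varphi(|B|)\|x^*\|$ exactly as above.

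There is no serious obstacle here; the whole proof is a careful bookkeeping argument combining biorthogonality, the averaging inequality $\min\le\tfrac{1}{|\cdot|}\sum$, the $t$-greediness hypothesis on one side, and the definitions of $\varphi$ and $\varphi^*$ on the other. The only minor point to watch is that one must pick the signs $\epsilon_j,\eta_i$ in $\mathbb F$ (unimodular) rather than in $\{-1,1\}$ when $\mathbb F=\mathbb C$, which is why the fundamental functions are defined with $|\varepsilon|=1$ rather than $\varepsilon=\pm 1$.
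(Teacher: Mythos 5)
Your proof is correct, and for the first inequality \eqref{partt12} it is in substance the paper's own argument: expand $(x^*-P_Bx^*)(P_Ax)$ as $\sum_{i\in A\setminus B}x^*(\be_i)\be_i^*(x)$, use the $t$-greedy condition on $B$ to dominate $\max_{i\in A\setminus B}|x^*(\be_i)|$ by $t^{-1}|B|^{-1}\sum_{j\in B}|x^*(\be_j)|$, and convert the two remaining sums into $\phi^*(|A|)\|x\|$ and $\phi(|B|)\|x^*\|$ via unimodular sign choices. (The paper packages the step $\|\sum_{i\in A\setminus B}x^*(\be_i)\be_i^*\|\le \max_i|x^*(\be_i)|\,\phi^*(|A|)$ as a convexity lemma from \cite{BBG}, whereas you factor the maximum out of the sum of absolute values first; these are interchangeable.) The only genuine divergence is in \eqref{part22}: you rerun the symmetric direct computation on $\sum_{j\in B\setminus A}\be_j^*(x)x^*(\be_j)$, while the paper deduces \eqref{part22} from \eqref{partt12} by passing to the canonical image $\hat{x}\big|_{\Y}\in\Y^*$ and invoking $\phi^{**}(m)\le\phi(m)$ (Remark~\ref{remarkdualbidem}). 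Your route is self-contained and avoids the bidual; the paper's is shorter once that remark is available, and makes the symmetry between the two estimates structural rather than computational. Both yield exactly the stated constants, and your closing caveat about choosing unimodular scalars in $\F$ rather than signs in $\{-1,1\}$ is the right one for the complex case.
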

	\begin{proof}
		To prove \eqref{partt12}, choose $\bfe' \in \Psi_B$ so that $x^*(\be_i)\varepsilon_i=|x^*(\be_i)|$ for all $i\in B$. Using convexity (see for instance \cite[Lemma 2.7]{BBG}), we obtain:
		\begin{eqnarray}
			|(x^*-P_Bx^*)(P_Ax)|&=&|\sum\limits_{i\in A\setminus B}x^*(\be_i)\be_i^*(x)|\le \|\sum\limits_{i\in A\setminus B}x^*(\be_i)\be_i^*\|\|x\|\nonumber\\
			&\le& \max_{i\in A\setminus B}|x^*(\be_i)|\varphi^*(|A|)\|x\|\le t^{-1}\min_{i\in B}|x^*(\be_i)|\varphi^*(|A|)\|x\|\nonumber\\
			&\le& t^{-1}|B|^{-1}\sum\limits_{i\in B}|x^*(\be_i)|\varphi^*(|A|)\|x\|\nonumber\\
			&=&t^{-1}|B|^{-1}|x^*(\bff_{\bfe' B})|\varphi^*(|A|)\|x\|\nonumber\\
			&\le& t^{-1}\varphi^*(|A|)\varphi(|B|)|B|^{-1}\|x\|\|x^*\|.\nonumber
		\end{eqnarray}
		
		This completes the proof of \eqref{partt12}. To prove \eqref{part22}, we use  \eqref{partt12} and Remark~\ref{remarkdualbidem} to obtain
		\begin{eqnarray*}
			|P_Bx^*(x-P_Ax)|&=&|(\hat{x}\big|_{\Y}-P_A\hat{x}\big|_{\Y})(P_Bx^*)|\le t^{-1} \varphi^*(|A|)\varphi^{**}(|B|)|A|^{-1}\|x^*\|\|\hat{x}\big|_{\Y}\|\\
			&\le& t^{-1} \varphi(|B|)\varphi^{*}(|A|)|A|^{-1}\|x^*\|\|x\|.
		\end{eqnarray*}
		
	\end{proof}

	\begin{proposition}\label{propositiondual2}
		Let $\B$ be a $\Delta_b$-$\n$-bidemocratic basis in a Banach space $\mathbb X$, and $0<s\le 1$. If $\B$ is $\C_{q,s}$-$\n$-$s$-quasi-greedy, then for all $0<t\le 1$, $\B^*$ is $\C_{q,t}^*$-$\n$-$t$-quasi-greedy, with
		
		\begin{equation*}
			\C_{q,t}\le \C_{q,s}+s^{-1}\Delta_b+t^{-1}\Delta_b.
		\end{equation*}
		
	\end{proposition}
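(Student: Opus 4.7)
The plan is to dualize the standard trick for quasi-greediness: to bound $\|G_n^t(x^*)\|$ in $\Y$, I would test against unit vectors $x\in\X$, and split the pairing $x^*(P_B(x))$ (where $B$ is a $t$-greedy set for $x^*$ with $|B|=n\in\n$) into a term controlled by the $s$-quasi-greedy constant of $\B$ and two error terms controlled by $\n$-bidemocracy through Lemma~\ref{lemmanbidemqg2}.

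Concretely: fix $x^*\in \Y$, $n\in \n$, and a $t$-greedy set $B$ for $x^*$ with $|B|=n$, so that $G_n^t(x^*)=P_Bx^*$. For any $x\in \X$ with $\|x\|=1$, choose a greedy set $A$ for $x$ of cardinality $n$ (this exists for every $n$, and any greedy set is in particular an $s$-greedy set for all $0<s\le 1$). Use the identity
\[
(P_Bx^*)(x) = x^*(P_Bx) = x^*(P_Ax) - (x^*-P_Bx^*)(P_Ax) + (P_Bx^*)(x-P_Ax),
\]
which is just obtained by adding and subtracting $x^*(P_A x)$ and rewriting $x^*(P_B(P_Ax))=(P_Bx^*)(P_Ax)$.

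Now I would bound the three terms one by one. The first one satisfies
$|x^*(P_Ax)|\le \|x^*\|\,\|P_Ax\|\le \C_{q,s}\|x^*\|\|x\|,$
since $P_Ax$ is an $s$-greedy sum of order $n\in\n$. For the second term, apply part (i) of Lemma~\ref{lemmanbidemqg2} with $B$ as the $t$-greedy set for $x^*$ and $|A|=|B|=n\in\n$, together with $\n$-bidemocracy, to get
$|(x^*-P_Bx^*)(P_Ax)|\le t^{-1}\phi^*(n)\phi(n)n^{-1}\|x\|\|x^*\|\le t^{-1}\Delta_b\|x\|\|x^*\|.$
For the third term, apply part (ii) of the same lemma with $A$ as the $s$-greedy set for $x$, yielding
$|(P_Bx^*)(x-P_Ax)|\le s^{-1}\phi^*(n)\phi(n)n^{-1}\|x\|\|x^*\|\le s^{-1}\Delta_b\|x\|\|x^*\|.$
Summing the three bounds and taking the supremum over $\|x\|=1$ gives $\|P_Bx^*\|\le (\C_{q,s}+s^{-1}\Delta_b+t^{-1}\Delta_b)\|x^*\|$, which is exactly the desired estimate.

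The argument is essentially assembly of prior results, so the main thing to be careful about is the decomposition itself and ensuring that the set $A$ has cardinality exactly $n\in\n$ (so that the $s$-quasi-greedy bound on $\|P_Ax\|$ is available and both applications of Lemma~\ref{lemmanbidemqg2} produce $\phi(n)\phi^*(n)/n$, which is what $\n$-bidemocracy controls). Choosing $A$ to be any greedy set of cardinality $n$ for $x$ resolves this, since such a set always exists and is automatically $s$-greedy for every $0<s\le 1$.
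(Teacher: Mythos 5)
Your proposal is correct and follows essentially the same route as the paper: the same three-term decomposition of $(P_Bx^*)(x)$, the same choice of an $s$-greedy (in your case greedy, hence $s$-greedy) set $A$ with $|A|=|B|=n\in\n$, and the same applications of the two estimates in Lemma~\ref{lemmanbidemqg2} together with $\n$-bidemocracy and the $\n$-$s$-quasi-greedy bound on $\|P_Ax\|$.
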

	\begin{proof}
		Fix $x^*\in \Y$ with $\|x^*\|=1$, and $B$ a $t$-greedy set for $x^*$ with $|B|\in \n$. Given $x\in \X$ with $\|x\|=1$, choose $A$ an $s$-greedy set for $x$ so that $|A|=|B|$. Applying Lemma~\ref{lemmanbidemqg2} we obtain
		\begin{eqnarray*}
			|P_Bx^*(x)|&\le& |P_Bx^*(x-P_Ax)|+|x^*(P_Ax)|+|(x^*-P_Bx^*)(P_Ax)|\\
			&\le &s^{-1} \varphi^{*}(|A|)\varphi(|B|)|A|^{-1}+\C_{q,s}+t^{-1}\varphi^*(|A|)\varphi(|B|)|B|^{-1}\\
			&\le& \C_{q,s}+s^{-1}\Delta_b+t^{-1}\Delta_b. 
		\end{eqnarray*}
		As $x$ and $x^*$ are arbitrary, it follows that $\B^*$ is $\C^*_{q,t}$-$t$-quasi-greedy, with $\C^*_{q,t}$ as in the statement. 
	\end{proof}
	\begin{corollary}If $\B$ is an $\n$-bidemocratic, $\n$-quasi-greedy Schauder basis, it is $\n$-$t$-quasi-greedy for all $0<t\le 1$. 
	\end{corollary}
	\begin{proof}
		Since $\B$ is a Schauder basis, it is equivalent to $\B^{**}$ (see \cite{AK2016}*{Corollary 3.2.4}), so the result follows from Proposition~\ref{propositiondual2}. 
	\end{proof}

	Clearly, if $\B$ is bidemocratic, it is $\n$-bidemocratic for any $\n$. The converse is false for sequences with arbitrarily large quotient gaps, as Example~\ref{examplendemocratic} shows. On the other hand, for sequences with bounded quotient gaps we have the following result. 
	
	\begin{proposition}\label{propositionbilboundedv2}
		Let $\B$ be a basis and $\n$ a sequence with $l$-bounded quotient gaps. If $\B$ is $\n$-$\Delta_b$-bidemocratic, it is $\mathbf{B}$-bidemocratic with
		$$\mathbf{B}\leq\max\left\lbrace \alpha_1\alpha_2 (n_1-1), l \Delta_b\right\rbrace.$$
	\end{proposition}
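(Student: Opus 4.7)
The plan is to exploit the monotonicity of the fundamental functions $\phi$ and $\phi^*$, which is immediate from their definitions as suprema over $|A|\le m$, to reduce the general case $m\in\N$ to the case $m\in \n$ where $\n$-bidemocracy can be applied directly. The $l$-bounded gap condition ensures that the ``price'' of this reduction is only a factor of $l$.

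First I would dispose of the small case $m<n_1$ by a trivial bound: since $\|\one_{\bfe A}\|\le |A|\alpha_1\le m\alpha_1$ and similarly $\|\one_{\bfe A}^*\|\le m\alpha_2$, we get $\phi(m)\phi^*(m)\le \alpha_1\alpha_2\, m^2\le \alpha_1\alpha_2 n_1\, m$. This produces the first term in the maximum.

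For $m\ge n_1$, let $k_0:=\max\{k\in \N\colon n_k\le m\}$, which is well-defined and finite since $\n$ is strictly increasing. Then $n_{k_0}\le m<n_{k_0+1}$, and by the $l$-bounded gap hypothesis $n_{k_0+1}\le l n_{k_0}\le l m$. Since $\phi$ and $\phi^*$ are nondecreasing and $n_{k_0+1}\in \n$, the $\n$-bidemocracy inequality yields
\[
\phi(m)\phi^*(m)\le \phi(n_{k_0+1})\phi^*(n_{k_0+1})\le \Delta_b\, n_{k_0+1}\le l\Delta_b\, m.
\]
This produces the second term in the maximum, and combining the two cases gives the stated bound $\mathbf{B}\le \max\{\alpha_1\alpha_2 n_1,\, l\Delta_b\}$.

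No step is really a ``main obstacle'' here; the whole argument is a one-line observation once one notices that $\phi$ and $\phi^*$ are nondecreasing. Unlike the Schauder-based arguments in Lemma~\ref{lemmandemdem} and Theorem~\ref{theoremqglc}, there is no need to partition sets and apply a triangle inequality, because bidemocracy is a product inequality that is already monotone in $m$ on both sides (up to the factor $m$ on the right), so passing from $m$ to the next element of $\n$ only costs the gap ratio $n_{k_0+1}/m\le l$.
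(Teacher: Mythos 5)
Your argument is correct and is essentially identical to the paper's proof: both handle $m<n_1$ by the trivial bound $\alpha_1\alpha_2 m^2\le \alpha_1\alpha_2 n_1 m$, and for $n_{k}\le m<n_{k+1}$ both use monotonicity of $\phi$ and $\phi^*$ to pass to $n_{k+1}$ and then the bound $n_{k+1}\le l n_{k}\le l m$. Nothing further to add.
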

	\begin{proof}
		Take $m\in\N$. If $m<n_1$, we have the following trivial bound:
		$$
		\varphi(m)\varphi^*(m)\leq \alpha_1\alpha_2 m^2\le \alpha_1\alpha_2 (n_1-1) m.
		$$
		Assume now that there exists $k\in \N$ such that $n_k \le m < n_{k+1}$.  Since $\n$ has $l$-bounded quotient gaps, we have
		\begin{equation*}
			\varphi(m)\varphi^*(m)\leq \varphi(n_{k+1})\varphi^*(n_{k+1})\le \Delta_b n_{k+1}\le l \Delta_b  n_{k}\le l \Delta_b  m.
		\end{equation*}
	\end{proof}

	It is proven in \cite[Proposition 4.2]{DKKT} that every bidemocratic basis is superdemocratic. Next, we extend that result to the case of $\n$-bidemocratic bases, and add the implications for $\n$-symmetry for largest coefficients and the $\n$-UL property.

	\begin{defi}\cite[Definition 4.6]{BB2}\label{definitionnslq}
		We say that a basis $\mathcal{B}$ in a Banach space $\mathbb X$ is $\n$-symmetric for largest coefficients if there exists a positive constant $\C$ such that 
		\begin{eqnarray}\label{sy}
			\Vert x+\mathbf{1}_{\varepsilon A}\Vert \leq \C\Vert x+\mathbf{1}_{\varepsilon' B}\Vert,
		\end{eqnarray}
		for any pair of sets $A,B$ with $\vert A\vert\leq \vert B\vert$, $A\cap B=\emptyset$, $\vert A\vert,\vert B\vert\in\n$, for any $\varepsilon\in\Psi_A, \varepsilon'\in\Psi_B$ and for any $x\in\X$ such that $\vert\be_i^*(x)\vert\leq 1\, \forall i\in\mathbb N$ and $\supp(x)\cap (A\cup B)=\emptyset$. The smallest constant verifying \eqref{sy} is denoted by $\Delta$ and we say that $\mathcal B$ is $\Delta$-$\n$-symmetric for largest coefficients. If $\n=\N$, we say that $\mathcal B$ is $\Delta$-symmetric for largest coefficients.
	\end{defi}

	\begin{lemma}\label{lemmanbidemnsuperdem} Let $\B$ be a $\Delta_b$-$\n$-bidemocratic basis in a Banach space $\mathbb X$. The following hold: 
	\end{lemma}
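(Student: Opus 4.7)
The plan is to build every consequence on a single duality identity: for any finite $A\subset\N$ and $\bfe\in\Psi_A$, the biorthogonality relation gives
\[
\bff^{*}_{\bar{\bfe}A}(\bff_{\bfe A})=|A|,
\]
and since $\|\bff^{*}_{\bar{\bfe}A}\|\le\phi^{*}(|A|)$, this forces the lower bound $\|\bff_{\bfe A}\|\ge |A|/\phi^{*}(|A|)$. Now the hypothesis $\phi(n)\phi^{*}(n)\le\Delta_{b}n$ for $n\in\n$ upgrades this to $\|\bff_{\bfe A}\|\ge\phi(|A|)/\Delta_{b}$ whenever $|A|\in\n$, which pins the norm of $\bff_{\bfe A}$ to its maximum possible value $\phi(|A|)$ up to the factor $\Delta_{b}$. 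Everything else in the lemma should flow from this pinning, combined with the complementary upper bound $\|\bff_{\bfe A}\|\le\phi(|A|)$.

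For $\n$-superdemocracy, take $A,B$ with $|A|\le|B|$ and both cardinalities in $\n$, and $\bfe\in\Psi_A$, $\bfe'\in\Psi_B$. Using the monotonicity of $\phi$, I would estimate $\|\bff_{\bfe A}\|\le\phi(|A|)\le\phi(|B|)$ and $\|\bff_{\bfe' B}\|\ge|B|/\phi^{*}(|B|)\ge\phi(|B|)/\Delta_{b}$, yielding $\Delta_{s}\le\Delta_{b}$. For $\n$-quasi-greediness for largest coefficients, fix $|A|\in\n$, $\bfe\in\Psi_A$ and $x\in\X$ with $\supp(x)\cap A=\emptyset$ and $\sup_{i}|\be^{*}_i(x)|\le 1$. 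Since $\bff^{*}_{\bar{\bfe}A}$ vanishes on $x$, the evaluation identity gives $\bff^{*}_{\bar{\bfe}A}(\bff_{\bfe A}+x)=|A|$, so
\[
\|\bff_{\bfe A}+x\|\ge\frac{|A|}{\phi^{*}(|A|)}\ge\frac{\phi(|A|)}{\Delta_{b}}\ge\frac{\|\bff_{\bfe A}\|}{\Delta_{b}},
\]
giving $\C_{ql}\le\Delta_{b}$.

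For the $\n$-UL property, fix $|A|\in\n$ and scalars $(a_i)_{i\in A}$, and let $m=\min_{i\in A}|a_i|$, $M=\max_{i\in A}|a_i|$. For the left inequality, pick $\bfe'\in\Psi_A$ with $\overline{\varepsilon'_i}a_i=|a_i|$; then $\bff^{*}_{\bfe'A}\bigl(\sum a_i\be_i\bigr)=\sum|a_i|\ge m|A|$, so $\|\sum a_i\be_i\|\ge m|A|/\phi^{*}(|A|)\ge m\|\bff_A\|/\Delta_{b}$, giving $\C_{1}\le\Delta_{b}$. For the right inequality, I would argue by convexity: when $\F=\R$, $\sum a_i\be_i/M$ lies in the absolutely convex hull of $\{\bff_{\bfe A}:\bfe\in\Psi_A\}$, so $\|\sum a_i\be_i\|\le M\phi(|A|)\le M\Delta_{b}\|\bff_A\|$; the complex case requires splitting into real and imaginary parts, producing the standard factor $\kappa$. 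Finally, $\n$-symmetry for largest coefficients follows at no extra cost by plugging the estimates $\Delta_{s}\le\Delta_{b}$ and $\C_{ql}\le\Delta_{b}$ into Proposition~\ref{propositionequivnslcnsdnqglc}, which yields $\Delta\le 1+\Delta_{b}(1+\Delta_{b})$.

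The steps are individually short — there is no serious analytic obstacle here, since every bound reduces to the one-line identity $\bff^{*}_{\bar{\bfe}A}(\bff_{\bfe A})=|A|$ against bidemocracy. The only point requiring minor care is the upper UL bound in the complex case, where one must not forget the factor $\kappa$ coming from separating real and imaginary parts before invoking convexity, and the bookkeeping of constants to record sharp values of $\C_{ql}$, $\Delta_s$, $\C_1$, $\C_2$ and $\Delta$ in the statement of the lemma.
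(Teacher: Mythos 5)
Your proof is correct and rests on the same engine as the paper's: the pairing $\one^*_{\overline{\bfe}A}(\one_{\bfe A})=|A|$ played against $\phi(n)\phi^*(n)\le\Delta_b n$. Item (i) and the lower UL estimate are essentially identical to the paper's argument. The two places where you diverge both cost you constants. For the $\n$-symmetry for largest coefficients, the paper does not route through Proposition~\ref{propositionequivnslcnsdnqglc}; it writes $\|x+\one_{\bfe A}\|\le\|x+\one_{\bfe' B}\|+2\max\{\|\one_{\bfe A}\|,\|\one_{\bfe' B}\|\}$ and bounds the maximum directly by $\Delta_b\|x+\one_{\bfe' B}\|$, applying the same dual pairing to $y:=x+\one_{\bfe' B}$ (since $\min_{n\in B}|\be_n^*(y)|=1$, one gets $\phi(|B|)\le\Delta_b|B|/\|\one^*_{\overline{\bfe'}B}\|\le\Delta_b\,\one^*_{\overline{\bfe'}B}(y)/\|\one^*_{\overline{\bfe'}B}\|\le\Delta_b\|y\|$). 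This yields $\Delta\le 1+2\Delta_b$, sharper than your $1+\Delta_b(1+\Delta_b)$ because $\Delta_b\ge 1$; on the other hand, your detour produces $\C_{ql}\le\Delta_b$ as a clean by-product, which the paper only recovers in the weaker form $\C_{ql}\le 1+\Delta$ via Proposition~\ref{propositionequivnslcnsdnqglc}. For the upper UL bound, the factor $\kappa$ you introduce is not needed: $\Psi_A$ consists of unimodular scalars in $\F$ and $\phi$ is a supremum over such sign patterns, so in the complex case $\sum_{i\in A}(a_i/M)\be_i$ already lies in the closed convex hull of $\{\one_{\bfe A}:\bfe\in\Psi_A\}$, giving $\|\sum_{i\in A}a_i\be_i\|\le M\phi(|A|)\le M\Delta_b\|\one_A\|$ directly (this is the convexity lemma the paper invokes); splitting into real and imaginary parts degrades the bound to $\kappa\Delta_b$, whereas the lemma asserts $\C_2\le\Delta_b$. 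Finally, a trivial bookkeeping point: with the paper's convention $\one^*_{\bfe A}=\sum_{n\in A}\e_n\be_n^*$, the functional wanted in your lower UL estimate is $\one^*_{\overline{\bfe'}A}$ with $\e'_n=\sgn(a_n)$, so that $\overline{\e'_n}\,a_n=|a_n|$; as written, $\one^*_{\bfe' A}(\sum_n a_n\be_n)=\sum_n\e'_n a_n$ need not equal $\sum_n|a_n|$ over $\mathbb{C}$.
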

	\begin{enumerate}[\rm i)]
		\item \label{lemmabidemsuperdemi} $\B$ is $\Delta_s$-$\n$-superdemocratic, with $\Delta_s\le \Delta_b$. 
		\item \label{lemmabidemslcii}$\B$ is $\Delta$-$\n$-symmetric for largest coefficients, with $\Delta\le 1+2\Delta_b$. 
		\item  \label{lemmabidemULiii} $\B$ has the $\n$-UL property, with $\max\{\C_1,\C_2\}\le \Delta_b$.  
	\end{enumerate}
	\begin{proof}
		\ref{lemmabidemsuperdemi} Take $A, B$ such that $\vert A\vert=\vert B\vert\in\n$, $\e\in\Psi_A$ and $\e'\in\Psi_B$. Hence, if $\overline{\e'}$ is the conjugate of $\e'$,
		\begin{eqnarray*}
			\Vert\one_{\e A}\Vert \leq \Delta_b \dfrac{\vert B\vert}{\Vert \one_{\overline{\e'}B}^*\Vert}=\Delta_b\dfrac{\one_{\overline{\e'}B}^*(\one_{\e'B})}{\Vert \one_{\overline{\e'}B}^*\Vert}\leq \Delta_b \Vert \one_{\e' B}^*\Vert.
		\end{eqnarray*}
		Thus, by \cite[Remark 3.3]{BB2}, $\B$ is $\Delta_s$-superdemocratic with $\Delta_s\leq \Delta_b$.
		
		\ref{lemmabidemslcii} Take $A, B$ such that $\vert A\vert=\vert B\vert\in\n$, $x\in\X$ such that $\vert \be_n^*(x)\vert\leq 1$ $\forall n\in\N$, $A \cap B=\emptyset$, $\supp(x)\cap (A\cup B)=\emptyset$, $\e\in\Psi_A$ and $\e'\in\Psi_B$. Then,
		
		\begin{eqnarray}\label{bid1}
			\Vert x+\one_{\e A}\Vert \leq \Vert x+\one_{\e' B}\Vert+2\max\lbrace\Vert \one_{\e A}\Vert,\Vert\one_{\e' B}\Vert\rbrace.
		\end{eqnarray}
		Define now the element $y:=x+\one_{\e' B}$. Thus,
		\begin{eqnarray}\label{bid2}
			\nonumber\max\lbrace\Vert \one_{\e A}\Vert,\Vert\one_{\e' B}\Vert\rbrace&\leq& \min_{n\in B}\vert\be_n^*(y)\vert\varphi(\vert B\vert)\leq \Delta_b \min_{n\in B}\vert\be_n^*(y)\vert\dfrac{\vert B\vert}{\Vert\one_{\overline{\e'}B}^*\Vert}\\
			\nonumber&\leq& \Delta_b \dfrac{\sum_{n\in B}\vert\be_n^*(x+\one_{\e' B})\vert}{\Vert\one_{\overline{\e'}B}^*\Vert}=\Delta_b\dfrac{\one_{\overline{\e'}B}^*(x+\one_{\e' B})}{\Vert\one_{\overline{\e'}B}^*\Vert}\\
			&\leq& \Delta_b\Vert x+\one_{\e' B}\Vert.
		\end{eqnarray}
		
		By \eqref{bid1}, \eqref{bid2} and \cite[Lemma 4.7]{BB2}, $\B$ is $\n$-symmetric for largest coefficients with constant $\Delta\leq 1+2\Delta_b$.
		
		\ref{lemmabidemULiii} Consider a sequence of scalars $(a_n)_{n\in A}$ with $A$ a finite set with $\vert A\vert\in\n$. On the one hand,
		\begin{eqnarray}
			\Vert\sum_{n\in A}a_n\be_n\Vert \stackrel{\text{convexity}}{\leq}\max_{n\in A}\vert a_n\vert\varphi(\vert A\vert)\stackrel{\ref{lemmabidemsuperdemi}}{\leq}\Delta_b \max_{n\in A}\vert a_n\vert\Vert\one_A\Vert.
		\end{eqnarray}
		
		To show that 
		\begin{eqnarray}
			\min_{n\in A}\vert a_n\vert\Vert \one_A\Vert \leq \Delta_b\Vert \sum_{n\in A}a_n \be_n\Vert,
		\end{eqnarray}
		we only have to repeat the argument used to show \eqref{bid2}. Then, $\mathcal B$ has the $\n$-UL property with constants $\lbrace \mathbf{C}_1, \mathbf{C}_2\rbrace\leq \Delta_b$.
	\end{proof}
	
	For $\n$ with bounded quotient gaps, one can use the $\n$-bidemocracy constant of an $\n$-QG basis to estimate the quasi-greedy and superdemocracy constants. 
	
	\begin{corollary}\label{corollaryboundedbidemnqg->qg}Let $\B$ be a basis, and $\n$ a sequence with $l$-bounded quotient gaps. If $\B$ is $\Delta_d$-$\n$-bidemocratic and $\C_{q,t}$-$\n$-$t$-quasi-greedy, it is $\C$-$t$-quasi-greedy with 
		$$
		\C\le \max\{\alpha_1 \alpha_2 (n_1-1),  \C_{q,t}\left(1+\left(l-1\right)\Delta_b^2\right) \},  
		$$
		and is $\mathbf{M}$-superdemocratic with 
		$$
		\mathbf{M}\le \max\left\lbrace \alpha_1\alpha_2 (n_1-1), l \Delta_b\right\rbrace.
		$$
	\end{corollary}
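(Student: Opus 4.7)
The plan is to derive this corollary by simply chaining previously established results, using $\n$-bidemocracy as a source of the auxiliary hypotheses ($\n$-UL and $\n$-superdemocracy) required by Proposition~\ref{propositionndemULboundedgaps}, and using the bounded-gap transfer from Proposition~\ref{propositionbilboundedv2} to upgrade bidemocracy to the full range $\n=\N$.

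For the $t$-quasi-greediness bound, I would first invoke Lemma~\ref{lemmanbidemnsuperdem} on the $\Delta_b$-$\n$-bidemocratic basis $\B$: part \ref{lemmabidemsuperdemi} gives that $\B$ is $\Delta_s$-$\n$-superdemocratic with $\Delta_s\le \Delta_b$, and part \ref{lemmabidemULiii} gives that $\B$ has the $\n$-UL property with both constants $\C_1,\C_2\le \Delta_b$. Combined with the assumed $\C_{q,t}$-$\n$-$t$-quasi-greediness, these are precisely the hypotheses required by Proposition~\ref{propositionndemULboundedgaps}\ref{nsuperdemocratic}. That proposition yields the bound
$$
\C\le \max\{\alpha_1\alpha_2 n_1, \C_{q,t}(1+(l-1)\C_1\Delta_s)\},
$$
and substituting $\C_1\Delta_s\le \Delta_b^2$ gives the stated constant $\C\le \max\{\alpha_1\alpha_2 n_1,\C_{q,t}(1+(l-1)\Delta_b^2)\}$.

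For the superdemocracy assertion, I would apply Proposition~\ref{propositionbilboundedv2} to transfer $\n$-bidemocracy into genuine bidemocracy, obtaining the bound $\mathbf{B}\le \max\{\alpha_1\alpha_2 n_1, l\Delta_b\}$. Then, invoking Lemma~\ref{lemmanbidemnsuperdem}\ref{lemmabidemsuperdemi} with $\n=\N$ on this bidemocratic basis gives $\mathbf{B}$-superdemocracy, which matches the stated constant $\mathbf{M}$. Since the proof is just a composition of earlier lemmas together with bookkeeping of constants, there is no substantive obstacle; the only care needed is to notice that the product $\C_1\Delta_s$ (rather than $\C_1\C_2\Delta_d$ as in part \ref{ndemocratic} of Proposition~\ref{propositionndemULboundedgaps}) is what controls the quasi-greedy constant, so that one loses only a factor of $\Delta_b^2$ instead of $\Delta_b^3$ in the final estimate.
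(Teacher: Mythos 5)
Your proposal is correct and follows essentially the same route as the paper: Lemma~\ref{lemmanbidemnsuperdem} supplies the $\n$-superdemocracy and $\n$-UL hypotheses with constants bounded by $\Delta_b$, Proposition~\ref{propositionndemULboundedgaps} then gives the quasi-greedy bound with $\C_1\Delta_s\le\Delta_b^2$, and the superdemocracy claim follows from Proposition~\ref{propositionbilboundedv2} combined with Lemma~\ref{lemmanbidemnsuperdem}. The constant bookkeeping matches the paper's.
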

	\begin{proof}
		By Lemma~\ref{lemmanbidemnsuperdem}, $\B$ is $\Delta_s$-$\n$-superdemocratic and has the $\n$-UL property with constants $\C_1$ and $\C_2$ such that 
		$$
		\max\{\C_1,\C_2, \Delta_s\}\le \Delta_b.
		$$
		Then, by \cite[Proposition 4.14]{BB2}, $\B$ is $\C$-$t$-quasi-greedy with 
		$$
		\C\le \max\{\alpha_1\alpha_2 (n_1-1), \C_{q,t}\left(1+\left(l-1\right)\Delta_b^2\right)\}. 
		$$
		By Proposition~\ref{propositionbilboundedv2} and Lemma~\ref{lemmanbidemnsuperdem}, $\B$ is $\mathbf{M}$-superdemocratic, with $\mathbf{M}$ as in the statement.
	\end{proof}

	\section{Some $\n$-greedy-type bases}\label{sectionuncond}
	
	\color{black}

	In greedy approximation theory, there are several ways to study the convergence of the TGA. For instance, for quasi-greedy bases the algorithm converges, but we do not know how fast it does. To study other types of convergence, we can consider among others greedy bases (\cite{KT}), almost greedy bases (\cite{DKKT}), semi-greedy bases (\cite{DKK2003}), partially greedy bases (\cite{DKKT}) and strong partially greedy bases (\cite{BBL}). 
	Here, we study the extensions of some of these notions - as well as some closely related ones. In \cite{O2015}, the two following extensions were considered.
	
	\begin{defi}
		We say that a basis $\B$ in a Banach space $\mathbb X$ is $\n$-greedy if there exists a positive constant $\C$ such that
		\begin{eqnarray}\label{greedy}
			\Vert x-\G_n(x)\Vert \leq \C\inf_{\vert\supp(y)\vert\leq n}\Vert x-y\Vert,\; \forall x\in\X, \forall \G_n\in\mathcal{G}_n, \forall n\in\n.
		\end{eqnarray}
	\end{defi}
	
	\begin{defi}
		We say that a basis $\B$  in a Banach space $\mathbb X$ is $\n$-almost greedy if there exists a positive constant $\C$ such that
		\begin{eqnarray}\label{agreedy}
			\Vert x-\G_n(x)\Vert \leq \C\inf_{\vert A\vert\leq n}\Vert x-P_A(x)\Vert,\; \forall x\in\X, \forall \G_n\in\mathcal{G}_n, \forall n\in\n.
		\end{eqnarray}
	\end{defi}
	
	If $\n=\N$, we recover the classical definition of greedy and almost-greedy bases. One interesting result is in \cite[Remark 1.1]{O2015}, where the author proved that for any sequence $\n$, $\n$-greediness (resp. $\n$-almost greediness) is equivalent to greediness (resp. almost greediness) and this fact does not happen for $\n$-quasi-greedy bases as we have mentioned at the beginning of the paper.
	
	\begin{remark}\label{remarkngreedy}\rm
		Although we have considered $\n$-greediness and $\n$-almost greediness for $t=1$, we can extend this version to the WTGA, and combining \cite[Remark 1.1]{O2015} with \cite[Theorem 1.5.1]{Temlyakov2008} (resp. \cite[Theorem 1.5.4]{Temlyakov2008}), we can obtain that for any basis $\B$, the following are equivalent.
		\begin{enumerate}[\rm i)]
			\item $\B$ is $\n$-$t$-greedy (resp. $\n$-$t$-almost greedy).
			\item $\B$ is $t$-greedy (resp. $t$-almost greedy) 
			\item $\B$ is greedy (resp. almost greedy) .
			\item $\B$ is $\n$-greedy (resp. $\n$-almost greedy). 
		\end{enumerate}
	\end{remark}

	As Oikhberg also proved that the $\n$-quasi-greedy property is not equivalent to the quasi-greedy property (\cite[Proposition 3.1]{O2015}), it is natural to ask whether equivalence holds for other intermediate properties. For example, per \cite{KT}, it is known that a basis is greedy if and only if it is unconditional and democratic. Thus, for seminormalized bases, the notion of unconditionality lies between those of quasi-greediness and greediness. Does $\n$-unconditionality  - defined in a natural manner - entail unconditionality? 
	Similarly, in this context we can ask whether the equivalence between greediness on one hand and unconditionality plus democracy on the other hand, holds for the respective extensions, namely $\n$-greediness, $\n$-unconditionality, and $\n$-democracy. Similar questions arise for other greedy-like properties. This section is dedicated to the study of some of these questions. We begin with the following definition, which extends the notion of unconditionality - or more precisely, the equivalent notion of suppression unconditionality - to the context of sequences with gaps. 
	\begin{defi}
		We say that a basis $\B$ in a Banach space $\mathbb X$ is $\n$-suppression unconditional if there exists a positive constant $\C$ such that
		\begin{eqnarray}\label{unc}
			\Vert P_A(x)\Vert\leq \C\Vert x\Vert,\; \forall x\in\X, \forall A\subset \N: \vert A\vert\in\n.
		\end{eqnarray}
		The smallest constant verifying \eqref{unc} is denoted by $\K_s$ and we say that $\B$ is $\K_s$-$\n$-suppression unconditional. If $\n=\mathbb N$, we say that $\B$ is $\K_s$-suppression unconditional.
	\end{defi}
	
	It is immediate that suppression unconditionality entails $\n$-suppression unconditionality. It turns out that the reverse implication holds as well. 
	\begin{proposition}\label{uncon}
		Let $\B$ be a $\K_s$-$\n$-suppression unconditional basis in a Banach space $\mathbb X$. Then, $\B$ is $\K_s$-suppression unconditional.
	\end{proposition}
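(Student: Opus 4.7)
The plan is to reduce the case of a general finite set $A$ to one with cardinality in $\n$ by enlarging $A$ with a small "phantom" projection whose norm we can send to zero. Concretely, fix $A\subset \N$ with $|A|\notin \n$ (the case $|A|\in\n$ is the hypothesis, and $A=\emptyset$ is trivial), and first assume $x\in\X$ has finite support. Since $\n$ is a strictly increasing infinite sequence, pick $n_k\in\n$ with $n_k\ge |A|$. Because $\supp(x)\cup A$ is finite, we can choose $B\subset \N\setminus(\supp(x)\cup A)$ with $|B|=n_k-|A|$, so that $A\cupdot B$ has cardinality $n_k\in\n$.

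Set $z:=\sum_{j\in B}\be_j$ and consider, for each $\e>0$, the vector $x+\e z$. Since $B\cap\supp(x)=\emptyset$ and $A\cap B=\emptyset$, one checks directly that
$$
P_{A\cupdot B}(x+\e z)=P_A(x)+\e z.
$$
Applying the $\K_s$-$\n$-suppression unconditionality hypothesis to $x+\e z$ and the set $A\cupdot B$, we get
$$
\|P_A(x)+\e z\|\le \K_s\|x+\e z\|,
$$
and the triangle inequality gives $\|P_A(x)\|\le \K_s\|x\|+\e(\K_s+1)\|z\|$. Letting $\e\to 0^+$ yields $\|P_A(x)\|\le \K_s\|x\|$ for all $x$ with finite support.

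For arbitrary $x\in\X$, approximate it by a sequence $(x_m)_m$ of finitely supported vectors with $x_m\to x$ in $\X$. Since each $\be_i^*$ is continuous, the projection $P_A$ is a bounded (finite-rank) operator on $\X$, so $P_A(x_m)\to P_A(x)$. Passing to the limit in the inequality $\|P_A(x_m)\|\le \K_s\|x_m\|$ yields $\|P_A(x)\|\le \K_s\|x\|$, as required.

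The only delicate point is making sure the constant does not degrade in the reduction, which is why we use the scaling $\e z$ and then take $\e\to 0$ rather than a single fixed auxiliary vector; this preserves $\K_s$ exactly. The rest is just careful bookkeeping, using that $\B$ being a Markushevich basis guarantees both the density of finitely supported vectors and the continuity of the coordinate projections.
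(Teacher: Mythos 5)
Your proof is correct and follows essentially the same strategy as the paper's: enlarge $A$ by an auxiliary set disjoint from $\supp(x)\cup A$ so that the total cardinality lies in $\n$, perturb $x$ by $\e$ times the indicator of that set, apply the $\n$-suppression unconditionality, and let $\e\to 0$ before concluding by density of finitely supported vectors. The constant $\K_s$ is preserved exactly, just as in the paper.
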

	\begin{proof}
		Let $A\subset\N$ be a finite set and $x\in\X$ with finite support. Define the element $z:=x+\e\one_C$, where $C>\supp(x)$ is such that $\vert A\vert+\vert C\vert\in\n$. We have
		$$\Vert P_A(x)\Vert \leq \Vert P_{A\cup C}(z)\Vert+\e\Vert\one_C\Vert\leq \K_s\Vert x\Vert+(1+\K_s)\e\Vert\one_C\Vert.$$
		Letting $\e\rightarrow 0$ and using the density of elements of finite support, we conclude that the basis is $\K_s$-suppression unconditional.
	\end{proof}
	
	Combining \ref{uncon} and the aforementioned results from \cite{O2015}, it follows that a basis is $\n$-greedy if and only if it is $\n$-unconditional and democratic. Now, can we replace democracy by $\n$-democracy? It turns out the answer is negative in general. In fact, we have the following result. 
	\begin{proposition}\label{propositionnonngreedy}Suppose $\n$ has arbitrarily large quotient gaps. Then, there is a Banach space $\X$ with a basis $\B$ that is unconditional and $\n$-superdemocratic, but not democratic and thus, not greedy. 
	\end{proposition}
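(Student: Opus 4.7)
The plan is to construct an explicit example, using the arbitrarily large gaps in $\n$ to decouple two scales: cardinalities in $\n$, at which the basis will be forced to look democratic, and cardinalities placed deep inside the gaps, at which democracy will be destroyed. Extract a subsequence $(n_{k_j})_{j\ge 1}$ of $\n$ with $n_{k_j+1}/n_{k_j}\to\infty$, fix pairwise disjoint subsets $(S_j)_{j\ge 1}$ of $\N$ with $|S_j|$ an integer close to the geometric mean $\sqrt{n_{k_j}n_{k_j+1}}$ (so that $n_{k_j}<|S_j|<n_{k_j+1}$, hence $|S_j|\notin\n$), and let $\X$ be the completion of $c_{00}$ under the lattice norm
\[
\|x\|\;:=\;\max\!\Bigl(\,\|x\|_2\,,\,\sup_{j\ge 1}\,\tfrac{1}{\sqrt{n_{k_j}}}\!\sum_{i\in S_j}|x_i|\Bigr).
\]
Since each summand is a lattice functional, the canonical basis $\B=(\be_i)_{i\in\N}$ is automatically $1$-unconditional (and in fact $1$-suppression unconditional and Schauder with constant $1$).

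The heart of the verification is a short computation. For any $A\subset\N$ with $|A|=n_k\in\n$, the $j$-th inner functional applied to $\one_A$ equals $|A\cap S_j|/\sqrt{n_{k_j}}$. If $n_k\le n_{k_j}$ then this is $\le n_k/\sqrt{n_{k_j}}\le\sqrt{n_k}$; otherwise $n_k\ge n_{k_j+1}$ (there are no elements of $\n$ strictly between $n_{k_j}$ and $n_{k_j+1}$), and the quantity is bounded by $|S_j|/\sqrt{n_{k_j}}\approx\sqrt{n_{k_j+1}}\le\sqrt{n_k}$. Hence $\|\one_A\|=\|\one_A\|_2=\sqrt{n_k}$ for every such $A$, which is $\n$-superdemocracy with constant $1$. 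For non-democracy, take $A_j\subset\N$ with $|A_j|=|S_j|$ disjoint from every $S_i$: then $\|\one_{A_j}\|=\sqrt{|S_j|}\asymp(n_{k_j}n_{k_j+1})^{1/4}$, while $\|\one_{S_j}\|\ge\tfrac{|S_j|}{\sqrt{n_{k_j}}}\asymp\sqrt{n_{k_j+1}}$, so $\|\one_{S_j}\|/\|\one_{A_j}\|\gtrsim(n_{k_j+1}/n_{k_j})^{1/4}\to\infty$. Thus the basis fails to be democratic, and a fortiori fails to be greedy.

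The main technical obstacle is the simultaneous tuning of $|S_j|$ and the weight $1/\sqrt{n_{k_j}}$: the same seminorms that inflate $\|\one_{S_j}\|$ must remain invisible on $\one_A$ whenever $|A|\in\n$. The arbitrarily large gap hypothesis is exactly what enables the geometric-mean placement of $|S_j|$ to balance both constraints: any $A$ with $|A|\in\n$ either sits below $n_{k_j}$ in cardinality (in which case $w_j|A|\le\sqrt{|A|}$) or exceeds $n_{k_j+1}$ (in which case $w_j|S_j|=\sqrt{n_{k_j+1}}\le\sqrt{|A|}$), and the two bounds fit together only because the ratio $n_{k_j+1}/n_{k_j}$ is free to be taken as large as we like. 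Without this freedom, the upper bound required for $\n$-superdemocracy and the divergent lower bound required for non-democracy cannot be reconciled.
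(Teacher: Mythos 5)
Your construction is essentially the one the paper uses: the paper's proof points to Remark~\ref{remarkunconditional}, which modifies Example~\ref{examplendemocratic} by replacing the partial-sum functionals with full weighted $\ell_1$-sums over blocks of size $m_i=\lfloor\sqrt{n_{k_i}n_{k_i+1}}\rfloor$ with weight $c_i/\sqrt{m_i}\asymp 1/\sqrt{n_{k_i}}$ — exactly your geometric-mean sets $S_j$ and weights $1/\sqrt{n_{k_j}}$ on top of the $\ell_2$ norm. Your verification (the two-case bound showing $\|\one_{\e A}\|=\sqrt{|A|}$ for $|A|\in\n$, and the witness sets of cardinality $|S_j|$ breaking democracy) is correct and matches the paper's argument, the only cosmetic difference being that you place the witness sets outside $\bigcup_j S_j$ rather than choosing one coordinate per block.
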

	\begin{proof}
		See Remark~\ref{remarkunconditional}.
	\end{proof}
	On the other hand, equivalence does hold for sequences with bounded quotient gaps. 
	
	\begin{theorem}\label{thgreedy}
		Suppose $\n$ has bounded quotient gaps. Then a basis $\B$ is $\n$-greedy if and only if $\B$ is $\n$-unconditional and $\n$-democratic.
	\end{theorem}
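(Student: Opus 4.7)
The plan is to reduce the theorem to a combination of the classical Konyagin--Temlyakov characterization (greedy $\Leftrightarrow$ unconditional $+$ democratic), Remark~\ref{remarkngreedy}, Proposition~\ref{uncon}, and Lemma~\ref{lemmandemdem}. Essentially, the $\n$-greedy property is insensitive to $\n$ (so we aim for the classical greedy property), $\n$-unconditionality collapses to ordinary unconditionality for \emph{any} sequence with gaps via Proposition~\ref{uncon}, and the bounded-gap hypothesis is only needed to upgrade $\n$-democracy to ordinary democracy.

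For the forward direction, suppose $\B$ is $\n$-greedy. By Remark~\ref{remarkngreedy}, $\B$ is greedy, and hence by the Konyagin--Temlyakov theorem it is unconditional and democratic. Both properties are obviously preserved under restricting cardinalities to $\n$, so $\B$ is $\n$-unconditional and $\n$-democratic. No use of the bounded-gap hypothesis is required here.

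For the reverse direction, assume $\B$ is $\n$-unconditional and $\n$-democratic, and that $\n$ has $l$-bounded gaps. From $\n$-unconditionality one extracts $\n$-suppression unconditionality, and Proposition~\ref{uncon} then yields ordinary suppression unconditionality, and hence ordinary unconditionality. In particular, $\B$ is a Schauder basis whose basis constant is controlled by the suppression unconditional constant. Now Lemma~\ref{lemmandemdem} applies: a Schauder $\n$-democratic basis with $l$-bounded gaps is democratic (with the explicit bound given there). Invoking the Konyagin--Temlyakov characterization once more, $\B$ is greedy, and then Remark~\ref{remarkngreedy} gives that $\B$ is $\n$-greedy.

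The only genuinely substantive step is the upgrade from $\n$-democracy to democracy, where the bounded-gap hypothesis is essential (Proposition~\ref{lemmaarbitrarilylargegaps} and Example~\ref{examplendemocratic} show this step must fail for arbitrarily large gaps); but that step is precisely the content of Lemma~\ref{lemmandemdem}, so the theorem is essentially a bookkeeping corollary of the earlier results. The only mild subtlety is ensuring that \textit{$\n$-unconditional} is understood in a sense strong enough to entail $\n$-suppression unconditionality, so that Proposition~\ref{uncon} is applicable; this is immediate since suppression unconditionality is the natural $\n$-analogue used throughout the paper.
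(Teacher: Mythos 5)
Your proof is correct and follows essentially the same route as the paper: both directions reduce to the Konyagin--Temlyakov characterization via Remark~\ref{remarkngreedy} (equivalently \cite[Remark 1.1]{O2015}), Proposition~\ref{uncon}, and Lemma~\ref{lemmandemdem}. Your explicit remark that one must first obtain unconditionality (hence the Schauder property) before invoking Lemma~\ref{lemmandemdem} is a small but welcome clarification of an ordering the paper leaves implicit.
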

	\begin{proof}
		If $\B$ is $\n$-greedy, by \cite[Remark 1.1]{O2015} it is greedy and then, using the main result of \cite{KT}, it is unconditional and democratic. Now, if $\B$ is $\n$-unconditional and $\n$-democratic, by \cite[Proposition 3.6]{BB2} and Proposition~\ref{uncon}, the basis is unconditional and democratic. Thus, it is greedy, or equivalently $\n$-greedy.
	\end{proof}
	
	Similar results hold for the property of being almost greedy and the usual characterization in terms of quasi-greediness and democracy or superdemocracy. 
	\begin{proposition}\label{propositionnonalmogstgreedy}Suppose $\n$ has arbitrarily large quotient gaps. Then, there is a Banach space $\X$ with a Schauder basis $\B$ that is $\n$-quasi-greedy and $\n$-superdemocratic, but neither democratic nor quasi-greedy.
	\end{proposition}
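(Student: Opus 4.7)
The plan is to construct the basis as a direct sum adapted to the gap structure of $\n$, in the spirit of the examples announced for Section~\ref{sectionexamples} (and used to prove Propositions~\ref{lemmanucclarge}, \ref{lemmaarbitrarilylargegaps} and~\ref{propositionnonngreedy}). Since $\n$ has arbitrarily large gaps, pass to a subsequence $(n_{k_j})_{j}$ with $n_{k_{j+1}}/n_{k_j}\to\infty$, and choose integers $d_j$ and $m_j$ with $n_{k_j}<m_j\ll d_j<n_{k_{j+1}}$ such that the cumulative sums $s_j:=\sum_{i\le j}d_i$ lie in $\n$ (the huge gaps make this easy: we have an entire interval $(n_{k_j},n_{k_{j+1}})$ of length $\gg n_{k_j}$ in which to place each $d_j$, and the previous $s_{j-1}$ can be absorbed by enlarging $d_j$ slightly). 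The space $\X$ will be an $\ell_p$-sum (for some $1<p<\infty$) of finite-dimensional blocks $X_j$ of dimension $d_j$, and $\B$ will be the concatenation of the canonical bases of the $X_j$; this is automatically Schauder.

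The norm on each $X_j$ is chosen to mimic the construction in Proposition~\ref{propositionnslcnsdem}: on a designated ``bad'' subblock of size $m_j\notin \n$, use a mix of an $\ell_1$-type functional (summing coordinates) and an $\ell_{q_j}$-norm with $q_j\to\infty$; the remaining coordinates of $X_j$ carry a plain $\ell_{q_j}$-norm. One arranges the parameters so that the characteristic functions $\mathbf 1_{B_j}$ on the bad subblock of $X_j$ grow like $m_j$ (because of the summing functional), while alternating-sign vectors $\mathbf 1_{\bfe B_j}$ grow only like $m_j^{1/q_j}$, destroying democracy and producing a concrete $x_j=\mathbf 1_{\bfe B_j}$ whose greedy sum of order $m_j$ is $\mathbf 1_{B_j}$ and thus blows up; by scaling and summing in $\ell_p$, one obtains a single vector $x\in\X$ for which some greedy sum of an order $m\notin\n$ is unbounded, so $\B$ is neither democratic nor quasi-greedy.

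The critical point — and the main obstacle — is arranging simultaneously that (a) for every order $n\in\n$ and every $t$-greedy set $A$ of cardinality $n$, the greedy sum splits essentially along block boundaries (because the in-block sizes $m_j$ and $d_j$ are much smaller than any $n\in\n$ lying in a later block, while being much larger than any $n\in\n$ lying in earlier blocks); and (b) for any two sets $A,B$ with $|A|=|B|\in\n$ and any signs, $\|\mathbf 1_{\bfe A\}\|\asymp\|\mathbf 1_{\bfe' B}\|$. Both will follow if the norms on the blocks are symmetric and if the ``bad'' subblock of $X_j$ is small enough compared to $n_{k_{j+1}}$ that, whenever $n\in\n$ forces $A$ to spill over into $X_j$, it must contain essentially all of $X_j$ — at which point the bad subblock is absorbed and its contribution becomes harmless. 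The large-gap hypothesis is precisely what provides this room.

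Combining these ingredients, $\B$ will be $\n$-quasi-greedy (since greedy sums of orders in $\n$ act like full-block projections, which are uniformly bounded in an $\ell_p$-sum) and $\n$-superdemocratic (by Remark~\ref{remarkalsosuperdemocracy} and the symmetry of each block norm together with the block-alignment at sizes in $\n$), while failing democracy and quasi-greediness via the witnesses $x_j$. The detailed verification — in particular, the estimate of $\|\mathbf G_n^t(x)\|$ by a sum of block-wise projections and the careful choice of $d_j,m_j,q_j$ — is deferred to Example~\ref{propositionverylong} in Section~\ref{sectionexamples}.
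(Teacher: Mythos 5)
Your general philosophy (perturb a well-behaved norm by functionals that only ``see'' cardinalities outside $\n$) is the right one, and it is what the paper does, but the construction you sketch has concrete obstructions and the deferral at the end points to the wrong example. The paper proves this proposition via Example~\ref{examplendemocratic} (Oikhberg's construction: a global $\ell_2$-norm maximized with \emph{scaled} partial summing functionals $\frac{c_i}{\sqrt{m_i}}\max_{l}|\sum_{j=\tilde m_i+1}^{\tilde m_i+l}a_j|$ on blocks of size $m_i=\lfloor\sqrt{n_{k_i}n_{k_i+1}}\rfloor$ with $c_i=(n_{k_i+1}/n_{k_i})^{1/4}$ unbounded), \emph{not} via Example~\ref{propositionverylong}; the basis of Example~\ref{propositionverylong} is not even $\n$-unconditional for constant coefficients (step iv of that example), hence not $\n$-superdemocratic, so it cannot witness this proposition. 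The scaling $c_i/\sqrt{m_i}$ is the entire point: it makes the summing term contribute at most $\sqrt{2}\sqrt{|B|}$ for every $|B|\le n_{k_i}$ and at most $\sqrt{|B|}$ for every $|B|\ge n_{k_i+1}$ --- i.e.\ for \emph{all} cardinalities in $\n$ --- so that $\|\bff_{\bfe B}\|\asymp\sqrt{|B|}$ uniformly there, while the full block $B_i$ (of cardinality $m_i\notin\n$) has norm $c_i\sqrt{m_i}\gg\sqrt{m_i}$.

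Your version omits this scaling, and that is fatal to $\n$-superdemocracy: if the bad subblock of $X_j$ carries an unscaled summing functional and has size $m_j>n_{k_j}$, then any subset $A$ of it with $|A|=n\in\n$, $n\le n_{k_j}$, satisfies $\|\bff_A\|\ge n$, while a set of the same cardinality in a ``good'' region with an $\ell_{q_j}$-norm ($q_j\to\infty$) has norm about $n^{1/q_j}$; the ratio is unbounded over $n\in\n$, so the basis you describe is not $\n$-superdemocratic. (Relatedly, letting $q_j\to\infty$ across blocks already makes the fundamental function non-uniform at cardinalities in $\n$.) A second gap is the claim that for $n\in\n$ every $t$-greedy set of cardinality $n$ ``splits along block boundaries'' or ``must contain essentially all of $X_j$'': greedy sets are determined by the moduli of the coefficients of an arbitrary $x$, not by the block structure, and nothing in your setup forces a greedy set of cardinality $n_{k_j}\in\n$ to avoid being, say, a proper subset of the bad subblock of $X_j$. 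In the paper's example this is handled not by alignment but by the norm estimate itself: \emph{every} set of cardinality in $\n$, wherever it sits, has norm comparable to $\sqrt{|B|}$, and $\n$-quasi-greediness is inherited from \cite[Proposition 3.1]{O2015}. As written, your argument does not establish either of the two positive properties, and the reference you defer to does not supply them.
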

	\begin{proof}
		See Example~\ref{examplendemocratic}.
	\end{proof}
	
	\begin{theorem}\label{thalmost}
		Suppose $\n$ has bounded quotient gaps. Then a Schauder basis $\B$ is $\n$-almost-greedy if and only if it is $\n$-quasi-greedy and $\n$-democratic.
	\end{theorem}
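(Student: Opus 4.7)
The strategy is to reduce the statement to the classical characterization of almost greediness (namely, that a basis is almost greedy if and only if it is quasi-greedy and democratic, due to Dilworth--Kalton--Kutzarova--Temlyakov in \cite{DKKT}) by invoking the upgrade lemmas for sequences with bounded gaps that have already been established in the paper and in \cite{BB}.

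For the forward direction, suppose $\B$ is $\n$-almost greedy. By Remark~\ref{remarkngreedy}, $\n$-almost greediness is equivalent to the standard almost greediness, and so by the classical result from \cite{DKKT}, $\B$ is quasi-greedy and democratic. These properties trivially imply $\n$-quasi-greediness and $\n$-democracy for any sequence $\n$, giving one implication.

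The backward direction is where the bounded-gaps hypothesis and the Schauder condition are essential. Assume $\B$ is $\n$-quasi-greedy and $\n$-democratic, with $\B$ Schauder and $\n$ of bounded gaps. First, \cite[Theorem 5.2]{BB} upgrades $\n$-quasi-greediness to quasi-greediness. Second, Lemma~\ref{lemmandemdem} upgrades $\n$-democracy to democracy (the Schauder constant $\K$ enters the quantitative bound). With both of these classical properties in hand, the characterization of \cite{DKKT} yields that $\B$ is almost greedy, and applying Remark~\ref{remarkngreedy} once more gives $\n$-almost greediness.

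There is no serious obstacle here: both directions are immediate consequences of previously-established upgrade results, packaged together with the classical characterization. The one subtlety worth highlighting is that the Schauder hypothesis is genuinely used in \emph{both} upgrade steps of the backward direction (once for passing from $\n$-quasi-greedy to quasi-greedy via \cite[Theorem 5.2]{BB}, and once for passing from $\n$-democratic to democratic via Lemma~\ref{lemmandemdem}), which explains its inclusion in the statement. Note also that, in contrast with the analogous characterization of $\n$-greediness (Theorem~\ref{thgreedy}), the Schauder assumption cannot simply be dropped: Proposition~\ref{propositionnonalmogstgreedy} rules out an analogous equivalence when $\n$ has arbitrarily large gaps.
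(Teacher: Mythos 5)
Your proof is correct and follows essentially the same route as the paper's: both directions reduce to the classical Dilworth--Kalton--Kutzarova--Temlyakov characterization via \cite[Remark 1.1]{O2015} (forward) and via \cite[Theorem 5.2]{BB} together with Lemma~\ref{lemmandemdem} (backward). (Your closing side remark slightly misattributes the role of Proposition~\ref{propositionnonalmogstgreedy}, which concerns the necessity of the bounded-gaps hypothesis rather than of the Schauder condition, but this does not affect the argument.)
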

	
	\begin{proof}
		If $\B$ is $\n$-almost greedy,  by \cite[Remark 1.1]{O2015} it is almost greedy and then, using the characterization of these bases proved in \cite{DKKT}, it is quasi-greedy and democratic. Now, if $\B$ is $\n$-quasi-greedy and $\n$-democratic, by \cite[Theorem 5.2]{BB} and \cite[Proposition 3.6]{BB2}, it is quasi-greedy and democratic and then almost greedy and $\n$-almost greedy.
	\end{proof}

	\section{$\n$-semi-greedy bases.} \label{sectionsemi}
	\color{black}
	We turn now our attention to the semi-greedy property, extended to our context. In order to define $\n$-$t$-semi-greedy bases, we could extend the definicion of $t$-weak-semi-greedy bases from \cite{BL2020} to the context of sequences with gaps, or give a definition in line with those of $\n$-$t$-quasi-greedy, $\n$-$t$-almost greedy and $\n$-$t$-greedy bases from \cite{O2015}. Given our context, we choose the latter option for our definition, but we will also give results under hypotheses that are an extension of the former.

	\begin{definition}\label{definitionnsemigreedy}
		We say that a basis $\B$ in a Banach space $\mathbb X$ is $\n$-$t$-semi-greedy if there exists a positive constant $\C$ such that for all $n\in \n$, $x\in \X$, and $A$ any $t$-greedy set for $x$ of cardinaltity $n$, there is $z\in [\be_i: i\in A]$ such that
		\begin{eqnarray}\label{sgreedy}
			\| x-z\|\le  \C \inf_{\substack{|\supp{(y)}|\le n\\ y=P_{\supp(y)}(y)}} \|x-y\|.
		\end{eqnarray}
		The $\n$-$t$-semi-greedy constant of the basis $\C_{sg,t}$ is the minimum $\C$ for which the above inequality holds. 
	\end{definition}

	If $\n=\N$, we say that $\mathcal B$ is $\C_{sg,t}$-$t$-semi-greedy and, if in addition $t=1$, we recover the classical definition of semi-greedy bases from \cite{DKK2003}. It is known that in this case, for Markushevich bases the semi-greedy property is equivalent to the almost greedy property (\cite{BL2020}, \cite{B2019}, \cite{DKK2003}). As the $t$-almost greedy and $\n$-$t$-almost greedy properties are equivalent, it is natural to ask whether the $\n$-$t$-semi-greedy property is also equivalent to the $t$-semi-greedy property, and also to the $s$-semi-greedy property for all $0<s\le 1$. To tackle the case of Markushevich bases, we will consider the following separation property. 
	
	\begin{definition}\cite[Definition 3.1]{BL2020}\label{definitionseparation}Let $(u_i)_i\subset \X$ be a sequence in a Banach space. We say that $(u_i)_i$ has the \emph{finite dimensional separation property} (or FDSP) if there is a positive constant $\C$ such that for every separable subspace $\mathbb{L}\subset \X$ and every $\epsilon>0$, there is a basic subsequence $(u_{i_k})_k$ with basis constant no greater than $\C+\epsilon$ and the following property: for every finite dimensional subspace $\mathbb{E}\subset \mathbb{L}$ there is $j_{\mathbb{E}}\in \N$ such that 
		\begin{equation}
			\|x\|\le (\C+\epsilon)\|x+z\|.\label{separation}
		\end{equation}
		for all $x\in \mathbb{E}$ and all $z\in \overline{[u_{i_k}:k> j_{\mathbb{E}}]}$. Any such subsequence is called a \emph{finite dimensional separating sequence} for $(\mathbb{L}, \C, \epsilon)$, and the minimum $\C$ for which this property holds is \emph{finite dimensional separation constant} $M_{fs}$ of $(u_i)_i$. 
	\end{definition}
	It is known that every Markushevich basis has the FDSP property (see  \cite[Proposition 3.11]{BL2020} for this result and estimates for the constant). We will also the following result, which is part of \cite[Theorem 7.1]{DKSWo2012}, restated for our purposes. 
	
	\begin{theorem}\label{theorem71} Let $\B$ be an almost greedy basis with quasi-greedy constant $\C_{q}$ and democracy constant $\Delta_d$. Then for every $0<t\le 1$, $\B$ is $t$-semi-greedy with constant $\C_{sg,t}$ that only depends on $t$, $\C_{q}$ and $\Delta_d$.
	\end{theorem}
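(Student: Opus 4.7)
The plan is to reduce the $t$-semi-greedy estimate to a swap argument starting from a near-best $n$-term approximation, with all constants controlled by the quasi-greedy and democracy constants.

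First, by the standard characterization of almost greedy bases (\cite{DKKT}), $\B$ is almost greedy with constant depending only on $\C_q$ and $\Delta_d$; hence, by Remark~\ref{remarkngreedy}, it is $t$-almost greedy for every $0<t\le 1$. Combining this with Lemma~\ref{rem1} and Proposition~\ref{propositionequivnslcnsdnqglc} yields superdemocracy and the UL property with constants governed by $\C_q$ and $\Delta_d$. Thus for every $x\in\X$ one may pick $y\in\X$ with $B:=\supp(y)$ of cardinality at most $n$ realising $\|x-y\|\le 2\inf_{|\supp(u)|\le n}\|x-u\|$.

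Given a $t$-greedy set $A$ for $x$ with $|A|=n$, I would build $z$ supported on $A$ by a swap: keep $\be_i^*(z):=\be_i^*(y)$ on $A\cap B$, fix a bijection $\sigma\colon B\setminus A\to D$ for some $D\subset A\setminus B$ with $|D|=|B\setminus A|$, set $\be_{\sigma(j)}^*(z):=\be_j^*(y)$ on $D$ (with signs adjusted so that the UL property can be applied in one direction), and $0$ elsewhere. Then $x-z=(x-y)+(y-z)$, with $y-z$ supported on $(B\setminus A)\cupdot D$ and every nonzero coefficient equal in modulus to some $|\be_j^*(y)|$ with $j\in B\setminus A$. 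The $t$-greedy condition on $A$ together with the biorthogonality bound $|\be_j^*(x)-\be_j^*(y)|\le \alpha_2\|x-y\|$ forces
\begin{equation*}
\max_{j\in B\setminus A}|\be_j^*(y)|\le t^{-1}\min_{i\in A\setminus B}|\be_i^*(x)|+\alpha_2\|x-y\|,
\end{equation*}
and combining this with the UL property (to pass from max coefficients to the norm) and $\n$-superdemocracy (to compare $\|\mathbf 1_{B\setminus A}\|$ with $\|\mathbf 1_D\|$, which have the same cardinality) should yield $\|y-z\|\le C(t,\C_q,\Delta_d)\|x-y\|$, hence $\|x-z\|\le\C_{sg,t}\inf_{|\supp(u)|\le n}\|x-u\|$ with $\C_{sg,t}=\C_{sg,t}(t,\C_q,\Delta_d)$.

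The main obstacle is controlling $\|y-z\|$ honestly without any unconditionality of the basis: the bijection $\sigma$ must be chosen so that the signs of $\be_j^*(y)$ line up with those of $\be_{\sigma(j)}^*(x)$ for the $t$-greedy comparison to give a single-sided UL bound, and the factor $t^{-1}$ must be cleanly absorbed. In the spirit of the Berna--L\'opez approach of \cite{BL2020}, this is carried out by decomposing $y-z$ into the two pieces $P_{B\setminus A}(y)$ and its ``twin'' supported on $D$, applying UL and superdemocracy separately to each, and combining them via the triangle inequality with a multiplicative cost depending only on $t$, $\C_q$ and $\Delta_d$.
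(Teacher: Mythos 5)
First, a point of reference: the paper does not prove Theorem~\ref{theorem71} at all --- it is imported verbatim from \cite[Theorem 7.1]{DKSWo2012} (``restated for our purposes''), so your proposal has to be judged against the known proofs of the implication ``almost greedy $\Rightarrow$ (weak) semi-greedy'' in \cite{DKK2003}, \cite{DKSWo2012} and \cite{B2019}, not against anything in this paper.

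Your argument has a genuine gap at the decisive estimate $\|y-z\|\le C(t,\C_q,\Delta_d)\|x-y\|$. Every nonzero coefficient of $y-z$ has modulus $|\be_j^*(y)|$ for some $j\in B\setminus A$, so the UL upper bound gives $\|y-z\|\le \C_2\max_{j\in B\setminus A}|\be_j^*(y)|\cdot\|\mathbf{1}_{(B\setminus A)\cup D}\|$. Feeding in your inequality $\max_{j\in B\setminus A}|\be_j^*(y)|\le t^{-1}\min_{i\in A\setminus B}|\be_i^*(x)|+\alpha_2\|x-y\|$, the first summand is controllable: $\min_{i\in A\setminus B}|\be_i^*(x)|=\min_{i\in A\setminus B}|\be_i^*(x-y)|$, and $\min_{i\in E}|\be_i^*(w)|\,\|\mathbf{1}_{E}\|\le C(\C_q)\|w\|$ for any finite $E$ by the standard truncation estimates, so after democracy (using $|B\setminus A|\le|A\setminus B|$) that piece is indeed bounded by a multiple of $\|x-y\|$. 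But the second summand produces the term $\alpha_2\|x-y\|\cdot\|\mathbf{1}_{(B\setminus A)\cup D}\|$, and $\|\mathbf{1}_{(B\setminus A)\cup D}\|$ is of the order of the fundamental function $\varphi(|B\setminus A|)$, which is unbounded. No choice of the bijection $\sigma$ or of signs removes it. The failure is not cosmetic: bounding $\|P_{B\setminus A}(y)\|$ by $C\|x-y\|$ amounts to bounding the projection of $x-y$ onto the arbitrary, non-greedy set $B\setminus A$, which is exactly what a merely quasi-greedy basis does not permit. The additive coefficient error $\alpha_2\|x-y\|$ is harmless pointwise but turns into a factor of $\varphi(|B\setminus A|)$ at the norm level, so the swap construction cannot be closed this way.

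The proofs in the literature avoid this by never transporting the coefficients of the competitor $y$. The Chebyshev approximant $z$ is built from $x$ itself: one keeps $\be_i^*(x)$ on $A\cap B$ and, on $A\setminus B$, truncates $\be_i^*(x)$ at the threshold $\alpha:=\max_{j\notin A}|\be_j^*(x)|$ (keeping the sign of $\be_i^*(x)$), so that every coefficient of the error $x-z$ has modulus at most $\alpha$, while $x-y$ has $|A\setminus B|$ coefficients of modulus at least $t\alpha$ on $A\setminus B$. The quasi-greedy truncation bounds together with democracy then control each piece of $x-z$ by a multiple of $\|x-y\|$ with no additive remainder. If you want to repair your write-up, you must replace the swap by such a truncation of $x$ on $A\setminus B$; as written, the sentence ``should yield $\|y-z\|\le C(t,\C_q,\Delta_d)\|x-y\|$'' is precisely the step that fails.
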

	
	Now we can prove that $\n$-$t$-semi-greedy Markushevich bases are semi-greedy.

	\begin{lemma}\label{lemmasemigreedynsemigreedy}Let $\B$ be an $\C_{sg,t}$-$\n$-$t$-semi-greedy Markushevich basis in a Banach space $\mathbb X$. Then, $\B$ is $\K_{sg,t}$-$t$-semi greedy with $\K_{sg,t}\le \C_{sg,t}M_{fs}$. Moreover, it is $s$-semi greedy for all $0<s\le 1$, with constants only depending on $s$, $\C_{sg,t}$ and $M_{fs}$. 
	\end{lemma}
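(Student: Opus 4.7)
The plan is to handle the two assertions in turn: first propagate the $\n$-$t$-semi-greedy constant to a $t$-semi-greedy constant via a perturbation argument exploiting the FDSP, and then route through almost greediness to obtain the $s$-semi-greedy conclusion for every $s\in(0,1]$.

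For the first assertion, fix $x\in\X$ (assumed of finite support; the general case follows by a density reduction in the spirit of \cite[Lemma 3.2]{BDKOW}) and a $t$-greedy set $A$ for $x$ with $|A|=m$. If $m\in \n$ the conclusion is immediate, so assume $m\notin \n$ and pick $n\in \n$ with $n>m$. Set $\alpha:=\max_{i\notin A}|\be_i^*(x)|$, fix $\epsilon>0$, and choose $w^*$ with $|\supp(w^*)|\le m$ and $\|x-w^*\|\le\inf_{|\supp(y)|\le m}\|x-y\|+\epsilon$. Apply the FDSP to the finite-dimensional subspace $\mathbb{E}:=\span\{\be_i: i\in A\cup\supp(x)\cup \supp(w^*)\}$ to obtain a separating subsequence and a set $J$ in its tail with $|J|=n-m$ and $J$ disjoint from $A\cup\supp(x)\cup\supp(w^*)$.

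Define $y_\epsilon:=x+(t\alpha+\epsilon)\sum_{j\in J}\be_j$. A direct check shows that $A\cup J$ is a $t$-greedy set for $y_\epsilon$ of cardinality $n\in\n$, the coefficient $t\alpha+\epsilon$ being tuned precisely so that elements of $J$ dominate the coefficients outside $A\cup J$. By $\n$-$t$-semi-greediness there is $z_\epsilon$ with $\supp(z_\epsilon)\subset A\cup J$ and $\|y_\epsilon-z_\epsilon\|\le \C_{sg,t}\inf_{|\supp(y)|\le n}\|y_\epsilon-y\|$. Comparing with $w^*+(t\alpha+\epsilon)\sum_{j\in J}\be_j$, whose support has size at most $n$, bounds the right-hand side by $\C_{sg,t}(\inf_{|\supp(y)|\le m}\|x-y\|+\epsilon)$. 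Let $z:=P_A(z_\epsilon)$; then $y_\epsilon-z_\epsilon=(x-z)+v$ with $x-z\in\mathbb{E}$ and $v$ supported in $J$, so the FDSP yields $\|x-z\|\le (M_{fs}+\epsilon)\|y_\epsilon-z_\epsilon\|$. Since $\span\{\be_i: i\in A\}$ is finite dimensional, the distance from $x$ to it is attained, and letting $\epsilon\to 0$ furnishes some $z^*$ with $\supp(z^*)\subset A$ and $\|x-z^*\|\le M_{fs}\,\C_{sg,t}\inf_{|\supp(y)|\le m}\|x-y\|$, giving $\K_{sg,t}\le M_{fs}\,\C_{sg,t}$.

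For the second assertion, the first part shows that $\B$ is $\K_{sg,t}$-$t$-semi-greedy. Since every $1$-greedy set is a $t$-greedy set, $\B$ is in particular semi-greedy, hence almost greedy with quasi-greedy and democracy constants controlled in terms of $\K_{sg,t}$ via the known equivalence (\cite{BL2020,B2019,DKK2003}). Applying Theorem~\ref{theorem71} then yields that $\B$ is $s$-semi-greedy for every $0<s\le 1$, with a constant depending only on $s$, $\C_{sg,t}$, and $M_{fs}$.

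The main obstacle will be the coordination of the three competing requirements on $J$: having cardinality exactly $n-m$ so that $|A\cup J|\in \n$, lying far enough along the separating subsequence for the FDSP bound to apply on $\mathbb{E}$, and being disjoint from $\supp(x)\cup\supp(w^*)$ so that the coefficients of $y_\epsilon$ on $J$ are exactly $t\alpha+\epsilon$ as designed. The reduction from general $x\in\X$ to the finite-support case also requires a careful but standard density argument.
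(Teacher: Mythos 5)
Your argument is essentially the paper's: pad the $t$-greedy set $A$ with $n-m$ tail elements of a separating subsequence so as to land in a cardinality belonging to $\n$, apply the $\n$-$t$-semi-greedy hypothesis to the padded vector, and use the separation inequality of the FDSP to pull the resulting estimate back to a vector supported in $A$; the second assertion then follows, exactly as in the paper, from semi-greediness $\Rightarrow$ almost greediness (\cite[Theorem 4.2]{BL2020}) together with Theorem~\ref{theorem71}. The one point where your write-up is weaker than the paper's is the reduction to finitely supported $x$. You need that reduction because your weight $t\alpha+\epsilon$ forces $J$ to avoid $\supp(x)$ and because you place $\span\{\be_i: i\in \supp(x)\}$ inside $\mathbb{E}$; but the reduction itself is not as routine as you suggest: for a Markushevich basis one does not have $P_F(x)\to x$, a $t$-greedy set for $x$ need not remain a $t$-greedy set for a finitely supported approximant, and \cite[Lemma 3.2]{BDKOW} concerns vectors of the special form $x+\bff_{\bfe A}$ with $\sup_i|\be_i^*(x)|\le 1$, so it does not apply off the shelf here. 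The paper sidesteps the issue entirely by taking $\mathbb{E}:=[x,\be_i: i\in A]$ (which is finite dimensional even when $x$ is not finitely supported) and padding with the weight $2a+1$, $a=\sup_i|\be_i^*(x)|$, so that the enlarged set is a $t$-greedy set for the padded vector with no disjointness from $\supp(x)$ required; note also that disjointness of the tail of the separating subsequence from $A$ is automatic, since $\be_i\in\mathbb{E}$ for $i\in A$ and the separation inequality would otherwise give $\|\be_i\|=0$. With that modification your proof goes through verbatim for arbitrary $x\in\X$, and no density argument is needed.
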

	\begin{proof}
		First we prove that $\B$ is $t$-semi greedy, with constant as in the statement. \\
		Fix $\epsilon>0$, and let $(\be_{i_j})_j$ be a separating sequence for $(\X,M_{fs},\epsilon)$. Choose $x\in \X\setminus \{0\}$, $m\in \N$, and $A$ an $m$-$t$-greedy set for $x$. If $|\supp(x)|\le m$, then $\supp(x)\subset A$ and then $x=P_{A}(x)$ because $\B$ is a Markushevich basis. Thus, 
		$$
		\|x-P_{A}(x)\|=0= \inf_{\substack{|\supp{(y)}|\le m }}\|x-y\|,
		$$
		and we are done. On the other hand, if  $|\supp(x)|> m$, then 
		$$
		\inf_{\substack{|\supp{(y)}|\le m }}\|x-y\|>0
		$$
		(see, for example, \cite[Lemma 4.8]{BL2020}), so one can choose $\delta>0$ and $y_0\in \X$ with $|\supp{(y_0)}|\le m$ so that 
		\begin{equation}
			\|x-y_0\|\le (1+\delta)+\inf_{\substack{|\supp{(y)}|\le m}}\|x-y\|.\label{closeenoughsg}
		\end{equation}
		Now pick $n\in \n$ so that $n>m$, let $a:=\sup_{i\in \N}|\be_i^*(x)|$, and define 
		\begin{equation*}
			\mathbb{E}:=[x, \be_i: i\in A ], \quad B:=\{i_{j_{\mathbb{E}}+1}, \dots, i_{j_{\mathbb{E}}+n-m}\},\quad \text{and}\quad v:=x+(2at^{-1}+1)\bff_{B}.
		\end{equation*}
		Since
		\begin{equation*}
			t |\be_i^*(v)|>a >|\be_j^*(v)|\quad \forall i\in B\;\forall j\not\in B,
		\end{equation*}
		we have that  $A\cupdot B$ is an $n$-$t$-greedy set for $v$. Hence, by the $\n$-$t$-semi-greedy condition there is $z\in \X$ with $\supp{(z)}\subset A\cupdot B$ such that
		\begin{equation*}
			\|v-z\|\le \C_{sg,t}\|v-(y_0+(2at^{-1}+1)\bff_{B})\|=\C_{sg,t}\|x-y_0\|.
		\end{equation*}
		From this and \eqref{closeenoughsg}, applying the separation condition we obtain
		\begin{align}
			\|x-P_{A}(z)\|&\le (M_{fs}+\epsilon)\|x-P_{A}(z)+(2at^{-1}+1)\bff_{B}-P_{B}(z)\|=(M_{fs}+\epsilon)\|v-z\|\nonumber\\
			&\le (M_{fs}+\epsilon)\C_{sg,t}\|x-y_0\|\le (M_{fs}+\epsilon)\C_{sg,t}(1+\delta)\inf_{|\supp{(y)}|\le m}\|x-y\|). \nonumber
		\end{align}
		Since $A$ is a finite set and $\epsilon, \delta$ are arbitrary, it follows that 
		$$
		\min_{\supp{(u)}\subset A}\|x-u\|\le \C_{sg,t}M_{fs}\inf_{\substack{|\supp{(y)}|\le m}}\|x-y\|.
		$$
		This proves that $\B$ is $\K_{sg,t}$-$t$-semi greedy with $\K_{sg,t}\le \C_{sg,t}M_{fs}$. In particular, it is semi-greedy with constant no greater than $\C_{sg,t}M_{fs}$. Hence, by \cite[Theorem 4.2]{BL2020}, it is almost greedy with democratic and quasi-greedy constants depending only on $\C_{sg,t}$ and $M_{fs}$. Now Theorem~\ref{theorem71} entails that for all $0<s\le 1$, $\B$ is $s$-semi greedy, with constant as in the statement. 
	\end{proof}
	
	\begin{remark}\rm If $\B$ has a weakly null subsequence, then by \cite[Proposition 3.11]{BL2020} we have $M_{fs}=1$, so Lemma~\ref{lemmasemigreedynsemigreedy} gives $\K_{sg,t}=\C_{sg,t}$. Similarly, if $\B$ is Schauder with basis constant $\K$, then $M_{fs}\le \K$, so  we get $\K_{sg,t}\le \C_{sg,t}\K$. \end{remark}
	
	We can relax the hypotheses of  Lemma~\ref{lemmasemigreedynsemigreedy} and still obtain that $\B$ is $t$-semi-greedy, though we do not get the same bound for the constant.  The proof is very similar to that of Lemma~\ref{lemmasemigreedynsemigreedy} - with some   differences due to the fact that we may not choose the $t$-greedy set, so we shall be brief.

	\begin{lemma}\label{lemmastronger}Let $0<t\le 1$ and $\C>0$. Suppose $\B$ is a Markushevich basis in a Banach space $\mathbb X$ with the property that for all $n\in \n$ and all $x\in \X$, there is $z\in \X$ with support in a $t$-greedy for $x$ of cardinality $n$ such that 
		\begin{align*}
			\| x-z\|\le&  \C\inf_{\substack{|\supp{(y)}|\le n}} \|x-y\|.
		\end{align*}
		Then, for all $x\in \X$ and all $m\in \N$, there is a $t$-greedy set $A$ for $x$ with $|A|=m$ and  $z\in \X$ with $\supp{(z)}\subset A$ such that
		\begin{align}
			\| x-z\|\le  \C M_{fs} \inf_{|\supp{(y)}|\le m} \|x-y\|.\label{generalsemi}
		\end{align}
		Hence, $\B$ is almost greedy and, for all $0<s\le 1$, $\B$ is $s$-semi greedy, with constant only depending on $s$, $\C$, and $M_{fs}$. \\
	\end{lemma}
	\begin{proof}
		Fix $\epsilon>0$, and let $(\be_{i_j})_j$ be a separating sequence for $(\X,M_{fs},\epsilon)$. Fix $x\in \X\setminus\{0\}$ with finite support $D$, and $m\in \N$. If $|D|\le m$, then $x=P_{D}(x)$ because $\B$ is a Markushevich basis. Thus, 
		$$
		\|x-P_D(x\|=0= \inf_{\substack{|\supp{(y)}|\le m }}\|x-y\|,
		$$
		and we are done. On the other hand, if $|D|> m$, as in the proof of Lemma~\ref{lemmasemigreedynsemigreedy}, given $\delta>0$ one can choose $y_0\in \X$ with $|\supp(y_0)|\le m$  so that 
		\begin{align}
			\|x-y_0\|\le& (1+\delta)\inf_{\substack{|\supp{(y)}|\le m }}\|x-y\|.\nonumber
		\end{align}	
		Now pick $n\in \n$ so that $n>m$, let $a:=\sup_{i\in \N}|\be_i^*(x)|$, and define 
		\begin{equation*}
			\mathbb{E}:=[\be_i: i\in D], \quad B:=\{i_{j_{\mathbb{E}}+1}, \dots, i_{j_{\mathbb{E}}+n-m}\},\quad \text{and}\quad v:=x+(at^{-1}+1)\bff_{B},
		\end{equation*}
		Note that $|\supp(v)|=|D \cupdot B|=|D|+n-m>n$. Let $A$ be a $t$-greedy set for $v$ with $|A|=n$ and $z\in \X$ with $\supp(z)\subset A$ such that 
		\begin{align}
			\| v-z\|\le&  \C\inf_{\substack{|\supp{(y)}|\le n}} \|v-y\|.\label{anothersg2}
		\end{align}
		Since
		\begin{equation*}
			\supp{(x)}\cap B=\emptyset \quad\text{and}\quad t |\be_i^*(v)|>a \ge |\be_j^*(v)|\quad \forall i\in B\;\forall j\not\in B,
		\end{equation*}
		it follows that $B\subset A$, and then $A\setminus B\subset D$ is a $t$-greedy set for $x$ of cardinality $m$. Applying the separation condition and \eqref{anothersg2}, we get 
		\begin{align*}
			\|x-P_{A\setminus B}(z)\|&\le (M_{fs}+\epsilon)\|x-P_{A\setminus B}(z)+(at^{-1}+1)\bff_{B}-P_{B}(z)\|=(M_{fs}+\epsilon)\|v-z\|\nonumber\\
			&\le (M_{fs}+\epsilon)\|v-y_0-(at^{-1}+1)\bff_{B} \|= (M_{fs}+\epsilon)\C\|x_0-y_0\|\\
			&\le (M_{fs}+\epsilon)(1+\delta)\C\inf_{|\supp{(y)}|\le m}\|x-y\|. \nonumber
		\end{align*}
		Given that $\delta$ and $\epsilon$ are arbitrary and there are only finitely many $t$-greedy sets for $x$ of cardinality $m$, we conclude that there is a $t$-greedy set $A_2$ for $x$ with $|A_2|=m$ and $z_2$ supported in $A_2$ such that
		\begin{align*}
			\|x-z_2\|\le M_{fs}\C \inf_{|\supp{(y)}|\le m}\|x-y\|.
		\end{align*}
		A density argument extends the result to vectors with infinity support. Now \cite[Theorem 4.2]{BL2020} entails that $\B$ is quasi-greedy and superdemocratic with contants depending only on $\C$ and $M_{fs}$, and the proof is completed by an application of Theorem~\ref{theorem71}.
	\end{proof}

	\begin{remark}\rm \label{remarkFDSPmark} Note that in Lemmas~\ref{lemmasemigreedynsemigreedy} and~\ref{lemmastronger}, the Markushevich hypothesis is only used to guarantee that $\B$ has the FDSP. Hence, if we replace the Markushevich hypothesis in those results by the hypothesis that $\B$ has the FDSP, we still obtain that $\B$ is almost greedy, and thus a Markushevich basis.
	\end{remark}
	
	To finish our study of $\n$-$t$-semi-greedy bases, we will consider the case where we remove the Markushevich hypothesis. By Remark~\ref{remarkFDSPmark} and  \cite[Corollary 3.9]{BL2020}, we get the following equivalence, which we will use in our next result. 
	\begin{lemma}\label{lemmanotmark} Let $\B$ be an $\n$-semi-greedy basis. The following are equivant. 
		\begin{itemize}
			\item $\B$ is not a Markushevich basis. 
			\item $\B$ does not have the FDSP. 
			\item The set $\{\be_i\}_{i\in \N}$ is weakly compact, and $0\not \in \overline{\{\be_i\}_{i\in \N}}^{w}$. 
		\end{itemize}
	\end{lemma}
	
	Now we can prove a result for $n$-$t$-semi-greedy bases that are not Markushevich bases.  The proof is a modification of the proof of  \cite[Proposition 4.9]{BL2020}, though we give a proof for the sake of completion. 
	
	\begin{lemma}\label{lemmanotmark2}
		Let $0<t\le 1$ and $\K>0$. Suppose $\B$ is a basis in a Banach space $\mathbb X$ with the property that for all $n\in \n$ and all $x\in \X$, there is a $t$ greedy set $A$ for $x$ with $|A|=n$ and $z\in [\be_i: i\in A]$ such that
		\begin{align*}
			\| x-z\|\le&  \K\inf_{\substack{y\in \X\\|\supp_{\B}{(y)}|\le n\\ y=P_{\B,\supp_{\B}(y)}(z) }} \|x-y\|.
		\end{align*}
		If $\B$ is not a Markushevich basis, there is $x_0\in \X$ and $x_0^*$ in $\X^*$ such that 
		$$
		\B_2:=\left(x_0,\be_i-x_0^*\left(\be_i\right)x_0\right)_{i \in \N}
		$$
		is an almost greedy basis for $\X$ with dual basis
		$$
		\B_2^*:=\left(x_0^*, \be_i^*\right)_{i\in \N}.
		$$
		The almost greedy constant of $\B_2$ depends only on $t$ and $\K$. 
	\end{lemma}
	Note that the conditions above hold if $\B$ is $\C_{sg,t}$-$\n$-$t$-semi-greedy, with $\K\le \C_{sg,t}$. 
	\begin{proof}
		By Lemma~\ref{lemmanotmark}, there is $x_1\in \X\setminus \{0\}$ and a subsequence $\left(\be_{i_l}\right)_{l\in \N}$ that converges weakly to $x_1$. By the Hahn-Banach Theorem, there is $x_1^*\in\X^*$ such that 
		$$
		\|x_1\|\|x_1^*\|=x_1^*(x_1)=1. 
		$$
		Let 
		$$
		a_0:=\frac{1}{\|x_1\|}\sup_{i\in \N}\left \Vert\be_i-x_1^*\left(\be_i\right)x_1\right\Vert,\quad x_0:=a_0x_1,\quad x_0^*:=\frac{x_1^*}{a_0}.
		$$
		Then 
		\begin{align}
			\|x_0\|\|x_0^*\|=& x_0^*(x_0)=1, \qquad \|x_0\|=\sup_{i\in \N}\left \Vert\be_i-x_0^*\left(\be_i\right)x_0 \right\Vert\label{norms}
		\end{align}
		and
		\begin{align}
			\be_{i_l}-x_0^*\left(\be_{i_l}\right)x_0\xrightarrow[l\to \infty]{w}0. \label{weaklynull}
		\end{align}
		Let $\B_1:=\left(\be_{i}-x_0^*\left(\be_{i}\right)x_0\right)_{i\in \N}$ and $\Z:=\overline{\left[\B_1\right]}$. Then $\B_1$ is a basis for $\Z$ with dual basis $\B_1^*:=\B^*\big|_{\Z}$. Note that \eqref{weaklynull} and \cite[Lemma 3.5, Remark 3.6]{BL2020} entail that $\B_1$ has the FDSP with $M_{fs}=1$. \\
		We will prove that we can apply Lemma~\ref{lemmastronger} to $\B_1$.  To that end, first define $T:\X\rightarrow \Z$ by
		$$
		T\left(x\right):=x-x_0^*\left(x\right)x_0. 
		$$
		Then $T$ is a bounded projection with $\|T\|\le 2$ and $T\left(\X\right)=\Z$. Fix $0<\epsilon<1$, $z\in \Z$  and $n\in \N$. Let $D:=\supp_{\B_1}(z)$. If 
		$$
		\inf_{\substack{y\in \Z\\|\supp_{\B_1}(y)|\le n\\y=P_{\B_1,\supp(y)}(y)}}\|z-y\|=0, 
		$$
		then by \cite[Lemma 4.8]{BL2020}, $z=P_{\B_1, D}(z)$ and $|D|\le n$. 
		Hence, 
		\begin{align}
			\left\Vert z-P_{D}(z)\right\Vert=0=\inf_{\substack{y\in \Z\\|\supp_{\B_1}(y)|\le n\\y=P_{\B_1,\supp(y)}(y)}}\|z-y\|.\label{smallsupport}
		\end{align}
		On the other hand, if 
		$$
		\inf_{\substack{y\in \Z\\|\supp_{\B_1}(y)|\le n\\y=P_{\B_1,\supp(y)}(y)}}\|z-y\|>0, 
		$$
		then $|\supp_{\B_1}(z)|>n$ and there is $A\subset \N$ with $|A|= n$ and scalars $(a_i)_{i\in A}$ such that 
		\begin{align}
			\left\Vert z-\sum_{i\in A}a_i\left(\be_i-x_0^*\left(\be_i\right)x_0\right)\right\Vert \le (1+\epsilon)\inf_{\substack{y\in \Z\\|\supp_{\B_1}(y)|\le n\\y=P_{\B_1,\supp_{\B_1}(y)}(y)}}\|z-y\|.\nonumber
		\end{align}
		Now let 
		$$
		u:=z+\sum_{i\in A}a_ix_0^*\left(\be_i\right)x_0. 
		$$
		By hypothesis, there is a set $B$ with $|B|=n$ that is $t$-greedy for $z$ with respect to $\B$, and scalars $(b_j)_{j\in B}$ such that 
		\begin{align}
			\left\Vert u-\sum_{j\in B}b_j\be_j\right\Vert\le \K\inf_{\substack{y\in \X\\|\supp_{\B}(y)\le n\\ y=P_{\B,\supp_{B}(y)}(y)}}\|u-y\|.\nonumber
		\end{align}
		We have 
		\begin{align}
			\left\Vert z-\sum_{j\in B}b_j\left(\be_j-x_0^*\left(\be_j\right)x_0\right) \right\Vert =&\left\Vert T\left( u-\sum_{j\in A}b_j\be_j\right)\right\Vert\le \|T\|\K\inf_{\substack{y\in \X\\|\supp_{\B}(y)\le n\\ y=P_{\B,\supp_{B}(y)}(y)}}\|u-y\|\nonumber\\
			\le& 2\K \left\Vert u-\sum_{i\in A}a_i\be_i\right\Vert=2\C \left\Vert z-\sum_{i\in A}a_i\left(\be_i-x_0^*\left(\be_i\right)x_0\right)\right\Vert\nonumber\\
			\le& 2\K(1+\epsilon)\inf_{\substack{y\in \Z\\|\supp_{\B_1}(y)|\le n\\y=P_{\B_1,\supp_{\B_1}(y)}(y)}}\|z-y\|.\label{bigsupport}
		\end{align}
		Since $\be_j^*(x_0)=0$ for all $j\in \N$, it follows that $B$ is also a $t$-greedy set for $z$ with respect to $\B_1$. Given that $z\in \Z$ is arbitrary, it follows from \eqref{smallsupport},  \eqref{bigsupport}, Remark~\ref{remarkFDSPmark} and the fact that $\B_1$ has the FDSP that $\B_1$ is a basis for $\Z$ that meets the conditions of Lemma~\ref{lemmastronger} with $\C=2(1+\epsilon)\K$ - and thus, with $\C=2\K$ as $\epsilon$ is arbitrary. It follows that $\B_1$ is almost greedy, with constant depending only on $t$ and $\K$.  By \cite[Lemma 4.7]{BL2020} and the equivalence between almost greediness and quasi-greediness plus (super)democracy (see \cite[Theorem 3.3]{DKKT}, \cite[Theorem 3.1]{BBG}), we obtain the result for $\B_2$. 
	\end{proof}

	\section{$\n$-partially greedy and $\n$-strong partially greedy bases}\label{sectionpartially}
	\color{black}
	
	The results of \cite{O2015}, Proposition~\ref{uncon} and Lemmas~\ref{lemmasemigreedynsemigreedy} and~\ref{lemmastronger} show that some of the known properties that lie between that of being quasi-greedy and greedy are equivalent to their counterparts for sequences with gaps (note that the semi-greedy property lies between them only for Markushevich bases). This suggests the question of whether one can find, amongst the properties generally studied in connection to the TGA, one or more for which this is not the case, so that one can obtain a new class of bases with the property that the rate of convergence of the TGA and WTGA with gaps is improved with respect to that of $\n$-$t$-quasi-greedy bases. \\
	It turns out that the properties of being partially greedy and strong partially greedy meet the criteria. The first one of these properties was introduced in \cite{DKKT} for Schauder bases, whereas the second one was introduced in \cite{BBL} (and recently considered in the context of quasi-Banach spaces in \cite{B}) in order to facilitate the extension of the notion of partial greediness to the framework of Markushevich bases. The main purposes of this section are to extend the concepts of partially greedy and strong partially greedy bases to our context,  study some of their basic properties and their relations to some well-known properties and their natural extensions, characterize the sequences $\n$ for which there are bases that are $\n$-(strong) partially greedy but not partially greedy, and establish the existence of $\n$-partially greedy bases with some properties of interest. We begin with the central definitions, where we also follow \cite{O2015} in extending the notions from the TGA to the WTGA.

	\begin{definition}\label{definitionnpartially}For $0<t\le 1$, we say that a basis $\B$ in a Banach space $\mathbb X$ is $\n$-$t$-partially greedy if there is a positive constant $\C$ such that 
		\begin{eqnarray}\label{npartiallygreedy}
			\Vert x-\G_n^t(x)\Vert \le \C \|x-S_n(x)\|,\; \forall x\in\X, \forall \G_n^t\in\mathcal{G}_n^t, \forall n\in\n.
		\end{eqnarray}
		The minimum $\C$ for which the above inequality holds is the \emph{$\n$-$t$-partially greedy constant} of $\B$, which we denote by $\C_{p,t}$.  
	\end{definition}

	\begin{definition}\label{definitionnstrongpartially}For $0<t\le 1$, we say that a basis $\B$ in a Banach space $\mathbb X$ is $\n$-$t$-strong partially greedy if there is a positive constant $\C$ such that 
		\begin{eqnarray}\label{nstrongpartiallygreedy}
			\Vert x-\G_n^t(x)\Vert \leq \C \min_{0\le k\le n}\|x-S_k(x)\|,\; \forall x\in\X, \forall \G_n^t\in\mathcal{G}_n^t, \forall n\in\n.
		\end{eqnarray}
		The minimum $\C$ for which the above inequality holds is the \emph{$\n$-$t$-strong partially greedy constant} of $\B$, which we denote by $\C_{sp,t}$.  
	\end{definition}

	For $\n=\N$ and $t=1$, we recover the notions of partially greedy and strong partially greedy bases from \cite{DKKT} and \cite{BBL} respectively, and we use the notation $\C_p$ and $\C_{sp}$.
	
	\begin{remark}\label{remarkequiv}\rm Note that any $\C_{sp,t}$-$\n$-$t$-strong partially greedy  basis is also $\C_{sq,t}$-$\n$-$t$-suppression quasi-greedy and $\C_{p,t}$-$\n$-$t$-partially greedy with $\max\{\C_{sq,t},\C_{p,t}\}\le \C_{sp,t}$, whereas any $\C_{p,t}$-$\n$-$t$-partially greedy $\K$-Schauder basis is $\C_{sp,t}$-$\n$-$t$-strong partially greedy with $\C_{sp,t}\le \C_{p,t}(1+\K)$. Moreover, if $\B$ is bimonotone, then $\C_{sp,t}=\C_{p,t}$. \end{remark}

	In this section, we will also study the relation between $\n$-partial greediness and some extensions of the notions of conservativeness and superconservativeness to our context. 
	\begin{defi}\cite[Definition 3.9]{BB2}\label{definitionsupercon}
		We say that a basis $\mathcal{B}$ is $\n$-superconservative in a Banach space $\mathbb X$ if there exists a positive constant $\C$ such that 
		\begin{eqnarray}\label{cons}
			\Vert \mathbf{1}_{\varepsilon A}\Vert \leq \C\Vert \mathbf{1}_{\varepsilon' B}\Vert,
		\end{eqnarray}
		for all $A,B\subset\N$ with $\vert A\vert\leq \vert B\vert$, $\vert A\vert,\vert B\vert\in\n$, $A<B$, and $\varepsilon\in\Psi_A, \varepsilon'\in\Psi_B$. The smallest constant verifying \eqref{cons} is denoted by $\Delta_{sc}$ and we say that $\mathcal B$ is $\Delta_{sc}$-$\n$-superconservative.
		
		If \eqref{cons} is satisfied for $\varepsilon\equiv\varepsilon'\equiv 1$, we say that $\mathcal B$ is $\Delta_c$-$\n$-conservative, where $\Delta_c$ is the smallest constant for which the inequality holds. 
		
		For $n=\N$, we say that $\B$ is $\Delta_{sc}$-superconservative and $\Delta_c$-conservative.
	\end{defi}
	
	While these extensions of the properties of being conservative and superconservative appear to be the most natural ones and are in line with the extensions of the concepts of democracy and superdemocracy in Definition~\ref{definitionsupercon}, it turns out that two other, perhaps less natural conservative-like properties are more closely connected to $\n$-partially greedy and $\n$-strong partially greedy bases, and thus are more useful tools for studying them.

	\begin{definition}\label{definitionnIIconservative}
		We say that a basis $\mathcal{B}$ in a Banach space $\mathbb X$ is $\n$-order-superconservative if there exists a positive constant $\C$ such that 
		\begin{eqnarray}\label{consII}
			\Vert \mathbf{1}_{\varepsilon A}\Vert \leq \C\Vert \mathbf{1}_{\varepsilon' B}\Vert,
		\end{eqnarray}
		for all $A,B\subset \N$ with $\vert A\vert= \vert B\vert$ for which there is $n\in \n$ such that $A\le n<B$, and all $\varepsilon\in\Psi_A, \varepsilon'\in\Psi_B$. The smallest constant verifying \eqref{consII} is denoted by $\Delta_{osc}$ and we say that $\mathcal B$ is $\Delta_{osc}$-$\n$-order-superconservative. If \eqref{consII} is satisfied for $\varepsilon\equiv\varepsilon'\equiv 1$, we say that $\mathcal B$ is $\Delta_{oc}$-$\n$-order-conservative, where $\Delta_{oc}$ is the smallest constant for which the inequality holds. 
		
		For $\n=\N$, we say that $\B$ is $\Delta_{oc}$-order-conservative or $\Delta_{osc}$-order-superconservative.
		
	\end{definition} 
	\begin{remark}\label{remarkIIsupercon}\rm  For Schauder bases, one can replace the condition $|A|=|B|$ with $|A|\le |B|$ in \eqref{definitionnIIconservative}, obtaining an equivalent definition. It follows easily that a Schauder basis is order-superconservative (resp. order-conservative) if and only if it is superconservative (resp. conservative). 
	\end{remark}
	
	We shall see later (Proposition~\ref{propositionboundedpartially}) that $\n$-strong partially greedy bases are not necessarily $\n$-conservative. On the other hand, the implication holds for $\n$-order-conservativeness, as our next lemma shows.

	\begin{lemma}\label{lemmanpartiallygreedynIIconservative}If $\B$ is $\n$-partially greedy, it is $\n$-order-superconservative, with $\Delta_{osc}\le \C_{p}$. 
	\end{lemma}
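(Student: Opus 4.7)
The plan is to apply the $\n$-partially greedy inequality at level $n$ to a carefully chosen test vector that makes $S_n$ extract $\bff_{\bfe A}$ (up to harmless extra terms) and makes some greedy set of size $n$ extract $\bff_{\bfe' B}$ (plus the same harmless terms), so that the difference $x-G_n$ recovers $\bff_{\bfe A}$ while the difference $x-S_n$ recovers $\bff_{\bfe' B}$.

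Concretely, fix $A,B\subset\N$ with $|A|=|B|=m$ and $n\in\n$ with $A\le n<B$, and fix $\bfe\in\Psi_A$, $\bfe'\in\Psi_B$. The key construction is the following. Let
\[
C:=\{1,\dots,n\}\setminus A,
\]
and consider the vector
\[
x:=\bff_{\bfe A}+\bff_C+\bff_{\bfe' B}.
\]
Since $A\le n$ and $B>n$, the supports of $\bff_{\bfe A}+\bff_C$ and $\bff_{\bfe' B}$ lie in $\{1,\dots,n\}$ and $\{n+1,n+2,\dots\}$ respectively, so
\[
S_n(x)=\bff_{\bfe A}+\bff_C,\qquad x-S_n(x)=\bff_{\bfe' B}.
\]
Now the idea is to take as a greedy set of size $n$ precisely $\Lambda:=C\cupdot B$: this has cardinality $(n-m)+m=n$, all coefficients of $x$ on $\Lambda$ have modulus one, and all coefficients of $x$ off $\Lambda$ either have modulus one (on $A$) or vanish. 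Thus $\Lambda$ satisfies $\min_{i\in\Lambda}|\be_i^*(x)|=1\ge 1=\max_{i\notin\Lambda}|\be_i^*(x)|$ and is a valid greedy set, yielding
\[
\G_n(x)=\bff_C+\bff_{\bfe' B},\qquad x-\G_n(x)=\bff_{\bfe A}.
\]
Applying the $\n$-partially greedy property at $n\in\n$ gives
\[
\|\bff_{\bfe A}\|=\|x-\G_n(x)\|\le \C_p\,\|x-S_n(x)\|=\C_p\,\|\bff_{\bfe' B}\|,
\]
which is what is needed.

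The only subtlety is the choice of the ``padding'' term $\bff_C$: it serves the dual role of completing the greedy set $\Lambda$ to cardinality exactly $n$ while keeping $\Lambda\cap A=\emptyset$, and simultaneously ensuring that the coefficients on $\Lambda$ and on $A$ are all tied in modulus so that $\Lambda$ actually qualifies as a $1$-greedy set of size $n$. Without it, one would only be able to treat the special case $|A|=|B|=n$, whereas the construction above handles the general situation in Definition~\ref{definitionnIIconservative} in one stroke, with the estimate $\Delta_{osc}\le \C_p$ falling out immediately. No obstacle of substance remains.
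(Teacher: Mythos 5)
Your proof is correct and is essentially identical to the paper's own argument: the paper takes the same test vector $x=\bff_{\bfe A}+\bff_{D}+\bff_{\bfe' B}$ with $D:=\{1,\dots,n\}\setminus A$ (your $C$) and uses $B\cup D$ as the $n$-greedy set. Nothing to add.
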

	\begin{proof}
		Fix $A$, $B$, $n$, $\varepsilon\in\Psi_A, \varepsilon'\in\Psi_B$ as in Definition~\ref{definitionnIIconservative}. Let 
		$$
		D:=\{1,\dots, n\}\setminus A \qquad \text{ and} \qquad  x:=\bff_{\bfe A}+\bff_{ D} +\bff_{\bfe' B}.
		$$
		Since $B\cup D$ is an $n$-greedy set for $x$, we have
		$$
		\|\bff_{\bfe A}\|=\|x-P_{B\cup D}(x)\|\le \C_{p}\|x-S_n(x)\|= \C_{p}\|\bff_{\bfe' B}\|.
		$$
	\end{proof}

	Next, for any sequence $\n$ with arbitrarily large quotient gaps, we construct a Schauder basis that is $\n$-$t$-strong partially greedy for all $0<t\le 1$, but neither quasi-greedy nor conservative, and thus, not partially greedy. In particular, this extends the result obtained in \cite[Proposition 3.1]{O2015} for the $\n$-quasi-greedy property. In addition, our construction shows that the well-known implication quasi-greedy $\Rightarrow$ unconditional for constant coefficients (see \cite{Wo}) does not extend to $\n$-quasi-greedy bases. We will need the following definition. 

	\begin{proposition}\label{propositionmoved}Let $\n$ be a sequence with arbitrarily large quotient gaps. The following hold: 
		\begin{enumerate}[\rm \color{red}i)\color{black}]
			\item \label{mainresultnpartially} There is a space $\X$ with a monotone Schauder basis $\B$ that is $\n$-$t$-strong partially greedy for all $0<t\le 1$, but neither conservative nor quasi-greedy. 
			
			\item \label{nocharacterizationpartially} There is a space $\X$ with a monotone Schauder basis $\B$ that is $\n$-bidemocratic and $\n$-$t$-quasi greedy for all $0<t\le 1$, but is not $\n$-order-conservative - and thus, not $\n$-partially greedy. 
		\end{enumerate}
	\end{proposition}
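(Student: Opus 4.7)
\medskip
\noindent\textbf{Proof proposal.} The plan is, for both items, to exhibit explicit Banach spaces with monotone Schauder bases, exploiting the assumption that $\n$ has arbitrarily large gaps to pass to a subsequence $(n_{k_j})_j$ with $n_{k_j+1}/n_{k_j}\to\infty$ as fast as needed. The idea is to place the ``pathology'' strictly inside the gap intervals $(n_{k_j},n_{k_j+1}]$, so that nothing can be detected when one restricts to indices $n\in\n$ but a great deal is visible at certain intermediate scales.

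For (i), I would work in the completion $\X$ of $c_{00}$ under a norm of the form
\begin{equation*}
  \|x\| := \max\!\Big(\,\sup_{n\in\N}\Big|\sum_{i=1}^n \be_i^*(x)\Big|,\;\; \sup_{j}\,\lambda_j\Big|\sum_{i\in E_j}\be_i^*(x)\Big|\Big),
\end{equation*}
with blocks $E_j\subset (n_{k_j},n_{k_j+1})$ chosen disjoint and with $|E_j|\to\infty$, and weights $\lambda_j\in(0,1)$ tuned so that the unit vector basis is monotone Schauder (the first term immediately guarantees monotonicity; the second term only has to be controlled above by the first on initial segments, which is arranged by picking $\lambda_j|E_j|$ of the correct order). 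The failure of conservativeness is then witnessed by comparing $\|\mathbf{1}_{E_j}\|\gtrsim \lambda_j|E_j|$, which can be made arbitrarily large, against $\|\mathbf{1}_F\|$ for some $F>E_j$ with $|F|=|E_j|$ supported outside every $E_\ell$; the failure of quasi-greediness arises from a vector supported on $E_j\cup G_j$ (for an appropriate small perturbation $G_j$) whose greedy truncation at a level $m\in (n_{k_j},n_{k_j+1})\setminus\n$ destroys the cancellation that kept the second term small.

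The heart of the argument, and the main obstacle, is verifying $\n$-$t$-strong partial greediness. Fix $n=n_k\in\n$, $x\in\X$, and a $t$-greedy set $A$ for $x$ with $|A|=n$. Because every block $E_j$ is contained strictly inside a gap of $\n$, each $E_j$ lies either entirely in $\{1,\dots,n\}$ or entirely in $\{n+1,n+2,\dots\}$. Writing $y:=x-\G_n^t(x)$ and $z:=x-S_n(x)$, the key claim is that for every $j$ either $\sum_{i\in E_j}\be_i^*(y)=\sum_{i\in E_j}\be_i^*(z)$ (when $E_j$ is disentangled from $A\triangle\{1,\dots,n\}$) or the mass swapped between $\{1,\dots,n\}$ and its complement is controlled coordinatewise by $t^{-1}$ times a coefficient of $z$, which in turn is dominated by the first (partial-sum) term of the norm. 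A careful case analysis comparing coefficients inside and outside $A$ should give $\|y\|\le\C_{sp,t}\|z\|$ with $\C_{sp,t}$ depending only on $t$.

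For (ii), I would modify the construction by adding to the norm a symmetric layer (e.g.\ an $\ell_{p}$-type term $(\sum|\be_i^*(x)|^p)^{1/p}$, or an Orlicz-type norm tailored so that $\varphi(n)\asymp n^{1/p}$ for $n\in\n$) chosen so that the unit-vector sums of sizes in $\n$ become equivalent regardless of signs, delivering $\n$-superdemocracy. By tuning the weights $\lambda_j$, one arranges that the ``trap'' term in the norm still fires for sizes between $n_{k_j}$ and $n_{k_j+1}$, so $\n$-order-superconservativeness is broken: taking $A\subset E_j$ of size not in $\n$ and $B$ of the same cardinality supported beyond $n_{k_j+1}$ gives $A\le n_{k_j+1}<B$ but $\|\mathbf{1}_A\|/\|\mathbf{1}_B\|\to\infty$ with $j$. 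Finally, $\n$-$t$-quasi-greediness for all $t$ follows as in (i) by the same ``gap-avoiding'' mechanism: restricted to $n\in\n$, no greedy selection can split an $E_j$, so the problematic term of the norm is never activated by the thresholding. By Lemma~\ref{lemmanpartiallygreedynIIconservative}, failure of $\n$-order-superconservativeness immediately implies failure of $\n$-partial greediness.
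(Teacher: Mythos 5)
Your guiding idea for part (i) --- push the ``trap'' functionals strictly inside the gaps of $\n$ so that they are invisible at cardinalities in $\n$ --- is indeed the mechanism of the paper's Example~\ref{propositionverylong}, but the concrete norm you propose does not work, and the failure is fatal rather than cosmetic. The first term $\sup_{n}\bigl|\sum_{i=1}^{n}\be_i^*(x)\bigr|$ is a \emph{global} summing norm, and it already destroys $\n$-quasi-greediness (hence $\n$-$t$-strong partial greediness, by Remark~\ref{remarkequiv}) for \emph{every} sequence $\n$: for $n\in\n$ take $x=\sum_{i=1}^{n}(1+\tfrac1n)\be_{2i-1}-\sum_{i=1}^{n}\be_{2i}$; the partial sums of $x$ telescope, so $\|x\|\le 2+\tfrac1n$ (up to the harmless extra max terms), while the unique greedy set of cardinality $n$ is the set of odd indices, and the initial partial sum of length $2n-1$ of $\G_n(x)=(1+\tfrac1n)\sum_{i=1}^{n}\be_{2i-1}$ equals $n+1$, so $\|\G_n(x)\|\ge n+1$. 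The paper avoids this by using only \emph{local} partial-sum functionals $\sup_{1\le l\le jn_{k_j}}\bigl|\sum_{i=1}^{l}a_{n_{k_j}+i}\bigr|$, which start at $n_{k_j}+1$ and extend only $jn_{k_j}$ steps, safely inside the gap; and, crucially, its norm carries a second ingredient your sketch omits entirely: the term $\sup_{S\in\mathcal S}\sum_{i\in S}|a_i|$ over sets $S$ with $|S|\in\n$ and $|S|<S$. That $\ell_1$-type term is what makes the basis $\n$-superconservative and is the engine of the verification of $\n$-$t$-strong partial greediness --- the mass swapped between $\{1,\dots,m\}$ and the greedy set is dominated by passing from $S$ to a modified set $S'\in\mathcal S$ and applying the $t$-greedy inequality coordinatewise. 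Your ``key claim'' has no analogue of this and, as stated, cannot be carried out against a global summing term.

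For part (ii) your sketch is closer in spirit to the paper's Example~\ref{examplendemocratic} (Oikhberg's construction: $\ell_2$ plus weighted local partial sums $\frac{c_i}{\sqrt{m_i}}\max_{l}\bigl|\sum_{j=\tilde m_i+1}^{\tilde m_i+l}a_j\bigr|$ with $c_i=(n_{k_i+1}/n_{k_i})^{1/4}$ and $m_i=\lfloor\sqrt{n_{k_i}n_{k_i+1}}\rfloor$), but the two points you treat as routine tuning are precisely the crux. First, $\n$-superdemocracy requires the trap term to be $\lesssim\sqrt{|B|}$ for \emph{every} $|B|\in\n$, both below and above the gap, and this forces the geometric-mean choice of block length and the exponent $1/4$ in the weight; ``tuning $\lambda_j$'' is the entire content of that step. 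Second, your stated reason for $\n$-$t$-quasi-greediness --- that no greedy selection of cardinality in $\n$ can split a block $E_j$ --- is simply false: greedy sets are determined by coefficient moduli, not positions, so a greedy set of cardinality $n\in\n$ can perfectly well meet a block partially. The actual proof that Oikhberg's basis is $\n$-quasi-greedy (\cite[Proposition 3.1]{O2015}) is not positional and has to be invoked or reproduced. As it stands, both constructions need to be replaced or substantially repaired before the verification can even begin.
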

	\begin{proof}\ref{mainresultnpartially} Given $\n$ with arbitrarily large quotient gaps, define   
		$$
		\mathcal{S}:=\{S\subset \N: |S|\in \n \text{ and } |S|<S\}, 
		$$
		and let $(n_{k_j})_j$ be a subsequence such that for all $j$,
		\begin{equation}
			n_{k_{j}+1}>3 (j+1) n_{k_j}. \label{fastenough}
		\end{equation}
		Let $\X$ be the completion of $\mathtt{c}_{00}$ with the following norm: 
		\begin{equation}
			\|(a_i)_i\|:=\max\left\lbrace\|(a_i)_i\|_{\infty}, \sup_{S \in \mathcal{S}}\sum\limits_{i\in S}|a_i|, \sup_{j\in\N}\sup_{1\le l\le j n_{k_j}}\left|\sum\limits_{i=1}^l a_{n_{k_j}+i}\right|\right\rbrace.\label{longnorm}
		\end{equation}
		It is easy to check that $\B$ is a monotone Schauder basis for $\X$. We will prove that it has the following properties: 
		\begin{enumerate}[\rm \color{blue}a)\color{black}]
			\item\label{nstpartially} For all $0<t\le 1$, $\B$ is $\n$-$t$-strong partially greedy with constant $\C_{sp,t}\le \max\{t^{-1}, 2\}$.
			\item \label{nsuper} $\B$ is $1$-$\n$-superconservative. 
			\item \label{conservative} $\B$ is not conservative. 
			\item \label{nuccc2} $\B$ is not $\n$-unconditional for constant coefficients. Thus, in particular, it is not quasi-greedy. 
		\end{enumerate}	
		
		To prove \ref{nstpartially}, set $B_0=\emptyset$  and, for each $m\in \N$, define 
		$$
		B_m:=\{1,\dots,m\}.
		$$
		Fix $x\in  \X$, $n\in \n$, $0\le m\le n$, and $A$ a $t$-greedy set for $x$ of order $n$. Clearly, 
		\begin{equation}
			\|(\be_i^*(x-P_A(x))_i\|_{\infty}\le t^{-1}\|(\be_i^*(x-S_m(x))_i\|_{\infty}.\label{easypart2}
		\end{equation}
		Now fix $S\in \mathcal{S}$. If $|S \cap ( B_m\setminus A)|\le |S \cap ( A\setminus B_m)|$, then
		\begin{eqnarray}
			\sum\limits_{i\in S}|\be_i^*(x-P_A(x))|&=&\sum\limits_{i\in S\setminus (A\cup B_m)}|\be_i^*(x-P_A(x))|+\sum\limits_{i\in S\cap (B_m\setminus A)}|\be_i^*(x)|\nonumber\\
			&\le& \sum\limits_{i\in S\setminus (A\cup B_m)}|\be_i^*(x-S_m(x))|+|S \cap ( B_m\setminus A)|\max_{i\in S\cap (B_m\setminus A)}|\be_i^*(x)|\nonumber\\
			&\le &\sum\limits_{i\in S\setminus (A\cup B_m)}|\be_i^*(x-S_m(x))|+ t^{-1}|S \cap ( A\setminus B_m)|\min_{i\in S\cap (A\setminus B_m)}|\be_i^*(x)|\nonumber\\
			&\le&t^{-1}\left(\sum\limits_{i\in S\setminus (A\cup B_m)}|\be_i^*(x-S_m(x))|+\sum\limits_{i\in S\cap (A\setminus B_m)}|\be_i^*(x)|\right) \nonumber\\
			&=& t^{-1}\sum\limits_{i\in S}|\be_i^*(x-S_m(x))|\le t^{-1}\|x-S_m(x)\|.\label{moreinAnotB}
		\end{eqnarray}
		If $|S \cap ( B_m\setminus A)|> |S \cap ( A\setminus B_m)|$, consider the equations 
		$$
		|A|=|A \setminus (B_m\cup S)|+|S \cap ( A\setminus B_m)|+|A\cap B_m\cap S|+|(A\cap B_m)\setminus S|
		$$
		and 
		$$
		|B_m|=|B_m \setminus (A\cup S)|+|S \cap ( B_m\setminus A)|+|A\cap B_m\cap S|+|(A\cap B_m)\setminus S|.
		$$
		Together with the hypothesis that $|A|=n\ge m=|B_m|$, they entail that 
		$$
		|A \setminus (B_m\cup S)|+|S \cap ( A\setminus B_m)|\ge |S \cap ( B_m\setminus A)|. 
		$$
		As $|S \cap ( B_m\setminus A)|> |S \cap ( A\setminus B_m)|$, it follows that there is a nonempty set $D\subset A \setminus (B_m\cup S)$ such that 
		\begin{equation*}
			|S \cap ( A\setminus B_m)|+|D|=|S \cap ( B_m\setminus A)|,
		\end{equation*}
		and thus also $E\subset \N$ such that 
		$$
		|E|+|D|=|S  \cap ( B_m\setminus A)| 
		$$
		and either $E=\emptyset$ or $E>A\cup B_m\cup S$. Let
		$$
		S':=(S \setminus (S\cap (B_m\setminus A))\cup D\cup E=(S\setminus (B_m\setminus A))\cup D\cup E. 
		$$
		It is easy to check that $|S'|=|S|\in \n$. The fact that $B_m<D\cup E$ implies that $\min{S}\le \min{S'}$, so $S'\in \mathcal{S}$. Thus,
		\begin{eqnarray}
			\sum\limits_{i\in S}|\be_i^*(x-P_A(x))|&=&\sum\limits_{i\in S\setminus (A\cup B_m)}|\be_i^*(x-P_A(x))|+\sum\limits_{i\in S\cap (B_m\setminus A)}|\be_i^*(x)|\nonumber\\
			&\le& \sum\limits_{i\in S\setminus (A\cup B_m)}|\be_i^*(x-S_m(x)))|+|S \cap ( B_m\setminus A)|\max_{i\in S\cap (B_m\setminus A)}|\be_i^*(x)|\nonumber\\
			&\le &\sum\limits_{i\in S\setminus (A\cup B)}|\be_i^*(x-S_m(x))|+ t^{-1}|D \cup (S \cap ( A\setminus B_m))|\min_{i\in D\cup (S\cap (A\setminus B_m))}|\be_i^*(x)|\nonumber\\
			&\le&t^{-1}\left(\sum\limits_{i\in S\setminus (A\cup B_m)}|\be_i^*(x-S_m(x))|+\sum\limits_{i\in D\cup E \cup (S\cap (A\setminus B_m))}|\be_i^*(x)|\right) \nonumber\\
			&=&t^{-1}\left(\sum\limits_{i\in S'\setminus (A\cup D\cup E)}|\be_i^*(x-S_m(x))|+\sum\limits_{i\in D\cup E \cup (S\cap A)}|\be_i^*(x-S_m(x))|\right) \nonumber\\
			&=& t^{-1}\sum\limits_{i\in S'}|\be_i^*(x-S_m(x))|\le t^{-1}\|x-S_m(x)\|.\label{moreinAnotB2}
		\end{eqnarray}
		From \eqref{moreinAnotB} and \eqref{moreinAnotB2} it follows that 
		\begin{equation}
			\sup_{S\in\mathcal{S}}\sum\limits_{i\in S}|\be_i^*(x-P_A(x))|\le t^{-1}\|x-S_m(x)\|.\label{finallypart1}
		\end{equation}
		Finally, fix $j\in \N$ and $1\le l\le j n_{k_j}$. We consider first the case $|A|\le n_{k_j}$: choose a set $S\subset \N$ so that 
		$$
		|S|=n_{k_j},\qquad n_{k_j}<S, \qquad A\cap \{n_{k_j}+1,\dots, n_{k_j}+l\}\subset S.
		$$
		As $S\in \mathcal{S}$ and $B_m<m+1\le n+1=|A|+1\le n_{k_j}+1$, we have
		\begin{eqnarray}
			\left|\sum\limits_{i=1}^{l}\be_{n_{k_j}+i}^*(x-P_A(x))\right|&\le&\left|\sum\limits_{i=1}^{l}\be_{n_{k_j}+i}^*(x)\right|+\sum\limits_{i=1}^{l}|\be_{n_{k_j}+i}^*(P_A(x))|\nonumber\\
			&\le& \left|\sum\limits_{i=1}^{l}\be_{n_{k_j}+i}^*(x-S_m(x)))\right|+\sum\limits_{i\in S}|\be_{n_{k_j}+i}^*(x-S_m(x))|\nonumber\\
			&\le& 2\|x-S_m(x)\|.\label{largerthanA}
		\end{eqnarray}
		Suppose now that $|A|> n_{k_j}$, and let 
		$$
		D:=\{n_{k_j}+1,\dots, n_{k_j}+l\}.
		$$
		As $|A|\in \n$, $|A|\ge n_{k_j+1}$, so it follows from \eqref{fastenough} that $D\subset \{1,\dots,|A|\}$. If $D\subset A$, then 
		\begin{equation}
			\left|\sum\limits_{i=1}^{l}\be_{n_{k_j}+i}^*(x-P_A(x))\right|=0\le \|x-S_m(x)\|.\label{iszero}
		\end{equation}
		If $D\not \subset A$, there is $S\subset\N$ such that 
		$$
		|S|=|A|<S \qquad \text{ and }\qquad |D\setminus A|= |A\cap S|. 
		$$
		Since $S\in \mathcal{S}$, we have 
		\begin{eqnarray}
			\left|\sum\limits_{i=1}^{l}\be_{n_{k_j}+i}^*(x-P_A(x))\right|&=&\left|\sum\limits_{i\in D\setminus A}\be_{i}^*(x)\right|\le |D\setminus A|\max_{i\in D\setminus A}|\be_i^*(x)|\nonumber\\
			&\le& t^{-1}|A\cap S|\min_{i\in A\cap S}|\be_i^*(x)|\le t^{-1}\sum\limits_{i\in S}|\be_i^*(x)|\nonumber\\
			&= &t^{-1}\sum\limits_{i\in S}|\be_i^*(x-S_m(x))|\le t^{-1}\|x-S_m(x)\|. \label{finalstepforpartially}
		\end{eqnarray}
		From \eqref{easypart2}, \eqref{finallypart1}, \eqref{largerthanA}, \eqref{iszero} and \eqref{finalstepforpartially} it follows that 
		$$
		\|x-P_A(x)\|\le \max\{t^{-1}, 2\}\|x-S_m(x)\|, 
		$$
		and then, the proof of \ref{nstpartially} is complete. \\
		
		Next, we prove \ref{nsuper}: choose $A, B, \bfe\in \Psi_A, \bfe'\in \Psi_B$ as in Definition~\ref{definitionsupercon}, and $S\subset B$ with $|S|=|A|\in \n$. Since $A<B$,  $|S|=|A|<S$. Thus, $S\in \mathcal{S}$. It follows that
		$$
		\|\bff_{\bfe A}\|\le |A|=|S|=\sum\limits_{i\in S}|\be_i^*(\bff_{\bfe' B})|\le \|\bff_{\bfe' B}\|,
		$$
		so $\B$ is $1$-$\n$-superconservative. \\	
		To prove \ref{conservative} define, for each $j$, 
		$$
		D_{j}:=\{n_{k_j}+1,\dots ,(j+1)n_{k_j}\}\qquad \text{and} \qquad E_{j}:=\{(j+1)n_{k_j}+1,\dots ,2(j+1)n_{k_j}\}.
		$$
		We have 
		\begin{equation}
			\|\bff_{D_j}\|\ge \sum\limits_{i=1}^{j n_{k_j}}\be_{n_{k_j+i}}^*(\bff_{D_j})=|D_j|= j n_{k_j}.\label{D_j}
		\end{equation}
		For any $j'\le j$,  
		$$
		(j'+1) n_{k_{j'}}\le (j+1)n_{k_{j}}< E_j,
		$$
		whereas \eqref{fastenough} entails that for any $j'>j$, 
		$$
		E_j<n_{k_{j'}}. 
		$$
		It follows that 
		$$
		\sup_{j'\in\N}\sup_{1\le l\le j' n_{k_{j'}}}\left|\sum\limits_{i=1}^l\be_{n_{k_{j'}}+i}^*(\bff_{E_j})\right|=0. 
		$$
		Similarly, for any $S\in \mathcal{S}$ with $|S|\ge n_{k_j+1}$, by \eqref{fastenough} we have 
		$$
		E_j<S. 
		$$
		Hence, 
		$$
		\|\bff_{E_j}\|=\max\{\|\be_i^*(\bff_{E_j})\|_{\infty}, \sup_{\substack{S \in \mathcal{S}\\ |S|\le n_{k_j}}}\sum\limits_{i\in S}|\be_i^*(\bff_{E_j})|\}\le n_{k_j}.
		$$
		It follows from this and \eqref{D_j} that 
		$$
		\frac{\|\bff_{D_j}\|}{\|\bff_{E_j}\|}\ge j.
		$$
		As $|D_j|=j n_{k_j}=|E_j|$ and $D_j<E_j$ for all $j$, the basis is not conservative.\\
		Finally, let us prove \ref{nuccc2}: fix $j>1$ and let 
		$$\bfe\in \Psi_{B_{n_{{k_j}+1}}}$$
		be a choice of alternating signs. Clearly, 
		\begin{equation}
			\sup_{j'\in\N}\sup_{1\le l\le j' n_{k_{j'}}}\left|\sum\limits_{i=1}^l\be_{n_{k_{j'}}+i}^*(\bff_{\bfe B_{n_{{k_j}+1}}} )\right|=1=\|(\be_i^*(\bff_{\bfe B_{n_{{k_j}+1}}}))_i\|_{\infty}\label{alternating}
		\end{equation}
		As $S\cap  B_{n_{{k_j}+1}}=\emptyset$ for all $S\in \mathcal{S}$ such that $|S|\ge n_{{k_j}+1}$, we have
		\begin{equation}
			\sup_{S \in \mathcal{S}} \sum\limits_{i\in S}|\be_i^*(\bff_{\bfe B_{n_{{k_j}+1}}})|= \sup_{\substack{S \in \mathcal{S}\\ |S|\le  n_{{k_j}}}} \sum\limits_{i\in S}|\be_i^*(\bff_{\bfe B_{n_{{k_j}+1}}})|\le n_{k_j}.\label{uncbutsmall}
		\end{equation}
		On the other hand, 
		$$
		\|\bff_{  B_{n_{{k_j}+1}}}\|\ge \sum\limits_{i=1}^{j n_{k_j}}\be_{n_{k_{j}}+i}^*(\bff_{ B_{n_{{k_j}+1}}} )=jn_{k_j}, 
		$$
		which, when combined with \eqref{alternating} and \eqref{uncbutsmall} gives
		$$
		\frac{\|\bff_{  B_{n_{{k_j}+1}}}\|}{\|\bff_{\bfe  B_{n_{{k_j}+1}}}\|}\ge j 
		$$
		for all $j$. Hence, $\B$ is not $\n$-unconditional for constant coefficients, and the proof is complete. \\
		
		\ref{nocharacterizationpartially} See Example~\ref{examplendemocratic}.
	\end{proof}
	Proposition~\ref{propositionmoved} shows the existence of $\n$-partially greedy bases that are not quasi-greedy for all sequences with arbitrarily large quotient gaps, but the bases we obtain do not have any of the usual properties studied in connection to the TGA, such as conservativeness, democracy or unconditionality for constant coefficients. This suggest the question of whether it is possible to construct $\n$-partially greedy bases that have some of those properties, but are still not quasi-greedy. For example, we can ask whether it is possible to extend \cite[Proposition 3.2]{O2015} and construct $\n$-partially greedy Schauder bases that are superdemocratic and not quasi-greedy. We answer this question in the afirmative and extend the result further, constructing bidemocratic bases with said properties. To that end, we modify the proof of \cite[Proposition 3.17]{AABBL2021}.

	\begin{proposition}\label{theorembidemocraticnotqG2}
		Let $1<p<+\infty$ and $\epsilon>0$. There is a Banach space $\X$ with a bimonotone Schauder basis $\B$ that has the following properties:  
		\begin{enumerate}[\rm \color{red} i) \color{black}]
			\item \label{1bidem22}
			For all finite sets $A\subset \N$ and all $\bfe\in  \Psi_A$, 
			$$
			\left|\left|\bff_{\bfe A}\right|\right|=\left|A\right|^{\frac{1}{p}}\qquad\text{and}\qquad \left|\left|\bff_{\bfe A}^*\right|\right|=\left|A\right|^{\frac{1}{p'}}. 
			$$
			Therefore, $\B$ is $1$-bidemocratic. 
			\item \label{notnQG222} $\B$ is not quasi-greedy.  
			\item \label{tnqg22}There is a sequence $\n=(n_k)_{k\in\N}$ such that, for all $0<t\le 1$, $\B$ is $\C_{sp,t}$-$\n$-strong partially greedy, with 
			$$
			\C_{sp,t}\le \max\left\lbrace 1+\epsilon,\frac{\epsilon}{t} \right\rbrace.
			$$
		\end{enumerate}
	\end{proposition}
	\begin{proof}
		We will assume $\epsilon<\frac{1}{2}$.\\
		Pick any positive integers $2<m_{1,1}<s_{1,1}$ so that 
		\begin{equation}
			\sum_{j=m_{1,1}}^{s_{1,1}}\frac{1}{j}<1.
		\end{equation}
		Suppose that $l_0\ge 1$ and $\left\lbrace m_{l,k}, s_{l,k} \right\rbrace_{\substack{1\le l\le l_0\\ 1\le k\le l }}$ have been chosen so that for all $1\le l\le l_0$: 
		\begin{align}
			\sum_{j=m_{l,k}}^{s_{l,k}}\frac{1}{j}<&\frac{1}{k}\le \frac{1}{l}+\sum_{j=m_{l,k}}^{s_{l,k}}\frac{1}{j}&&\forall 1\le k\le l, \label{conditionclosesums2}\\
			l+1<&m_{l,k}<s_{l,k} &&\forall 1\le k\le l,\label{movetotheright12}\\
			s_{l,k}<&m_{l,k+1} &&\forall 1\le k<l.\label{movetotheright22}
		\end{align}
		Then choose $\left\lbrace m_{l_0+1,k}, s_{l_0+1,k} \right\rbrace_{1\le k\le l_0+1}$ as follows: pick $l_0+2<m_{l_0+1,1}<s_{l_0+1,1}$ so that 
		\begin{equation*}
			\sum_{j=m_{l_0+1,1}}^{s_{l_0+1,1}}\frac{1}{j }<1\le \frac{1}{l_0+1}+\sum_{j=m_{l_0+1,1}}^{s_{l_0+1,1}}\frac{1}{j}.
		\end{equation*}
		If $1\le k_0\le l_0$  ahd  $\left\lbrace m_{l_0+1,k}, s_{l_0+1,k} \right\rbrace_{1\le k\le k_0}$ have been chosen with the properties that 
		\begin{align*}
			\sum_{j=m_{l_0+1,k}}^{s_{l_0+1,k}}\frac{1}{j}&<\frac{1}{k}\le \frac{1}{l_0+1}+\sum_{j=m_{l_0+1,k}}^{s_{l_0+1,k}}\frac{1}{j}&&\forall 1\le k\le k_0, \\
			l_0+2<&m_{l_0+1,k}<s_{l_0+1,k} &&\forall 1\le k\le k_0,\\
			s_{l_0+1,k}<&m_{l_0+1,k+1} &&\forall 1\le k<k_0,
		\end{align*}
		choose $m_{l_0+1,k_0+1}$ and $s_{l_0+1,k_0+1}$ so that
		$$
		s_{l_0+1,k_0+1}>m_{l_0+1,k_0+1}>s_{l_0+1,k_0}
		$$
		and 
		\begin{align*}
			\sum_{j=m_{l_0+1,k_0+1}}^{s_{l_0+1,k_0+1}}\frac{1}{j}<&\frac{1}{k_0+1}\le \frac{1}{l_0+1}+\sum_{j=m_{l_0+1,k_0+1}}^{s_{l_0+1,k_0+1}}\frac{1}{j}.
		\end{align*}
		In this manner, an inductive construction yields sequences of positive integers $\left\lbrace m_{l,k} \right\rbrace_{\substack{l\in \N\\ 1\le k\le l }}$ and $\left\lbrace s_{l,k} \right\rbrace_{\substack{l\in \N\\ 1\le k\le l }}$ such that \eqref{conditionclosesums2}, \eqref{movetotheright12} and \eqref{movetotheright22} hold for all $l\in \N$. \\
		Next, choose a sequence of intervals of positive integers $\left\lbrace A_l\right\rbrace_{l\in\N}$ so that 
		\begin{equation}
			\left|A_l\right|=2l+\sum_{k=1}^{l}s_{l,k}-m_{l,k} \qquad\text{and}\qquad \max\left(A_l\right)+l+1<A_{l+1}\quad\forall l\in \N.  \label{separatingthesets2}
		\end{equation}
		For each $l\in \N$ and each $1\le k\le l$, define 
		
		\begin{align*}
			i_{l,k}:=&\begin{cases}
				\min A_l & \text{if }k=1,\\
				\min A_l+\sum_{r=1}^{k-1}\left(s_{l,r}-m_{l,r}+2\right) & \text{if }1<k\le l,  
			\end{cases}
		\end{align*}
		and for each $l\in \N$ and each $j\in A_l$,
		\begin{align*}
			b_{l,j}:=&
			\begin{cases}
				\frac{1}{k^{\frac{1}{p'}}} & \text{if }j=i_{l,k}\text{ with } 1\le k\le l,\\
				\frac{1}{\left(m_{l,k}+t\right)^{\frac{1}{p'}}} & \text{if } j=i_{l,k}+1+t \text{ with } 1\le k\le l \text{and }0\le t\le s_{l,k}-m_{l,k}.
			\end{cases}
		\end{align*}

		Note that with these definitions, \eqref{movetotheright12} and \eqref{movetotheright22} guarantee that for each $l\in \N$, if $j, j'\in A_l$ and $j\not=j'$, then $b_{l,j}\not=b_{l,j'}$. In particular, for each nonempty finite set $A\subset\N$ and each $l\in \N$, we have
		
		\begin{align}
			\sum\limits_{j\in A\cap A_l}b_{l,j}\le& \sum_{j=1}^{\left|A\right|}\frac{1}{j^{\frac{1}{p'}}} \le \int_{0}^{\left|A\right|}x^{-\frac{1}{p'}}dx = p \left|A\right|^{\frac{1}{p}}.\label{firstboundforfundamentalfunction2}
		\end{align}
		Now choose strictly increasing sequences of positive integers $\mathbf{l}=(l_k)_{k\in\N}$, $\mathbf{f}=\left(f_k\right)_{k\in\N}$ and $\n=(n_k)_{k\in \N}$ as follows: Set $l_1:=1$ and $f_1:=|A_1|$, and choose $n_1>A_{l_1}$. Then pick $l_2>n_1$ sufficiently large so that 
		$$
		\sum_{j=1}^{l_2}\frac{1}{j}>\left(\frac{2^{1+\frac{1}{p}} p n_1}{\epsilon}\right)^{p'}, 
		$$
		and let $f_2:=|A_{l_2}|$, and choose $n_2>A_{l_2}$. Note that $f_2>l_2$. Fix $k_0\ge 2$, and suppose (considering  \eqref{separatingthesets2}) that $(l_k)_{1\le k\le k_0}$, $(f_k)_{1\le k\le k_0}$ and $(n_k)_{1\le k\le k_0}$ have been chosen so that for all $1\le k\le k_0-1$,
		\begin{align}
			&n_{k+1}>\max\left(A_{l_{k+1}}\right)> f_{k+1}=\left\vert A_{l_{k+1}}\right\vert  >l_{k+1}>n_k>\max\left(A_{l_{k}}\right)>f_k=\left\vert A_{l_{k}}\right\vert;\label{intercalate}\\
			&\sum_{j=1}^{l_{k+1}}\frac{1}{j}>\left(\frac{2^{\frac{1}{p}}(k+1)pn_k}{\epsilon}\right)^{p'}.\label{newlowerbound}
		\end{align}
		Then choose $n_{k_0+1}>l_{k_0+1}>n_{k_0}$ so that, setting $f_{k_0+1}=\left\vert A_{l_{k_0+1}}\right\vert$, the  above inequalities hold for $k=k_0$. By induction, this process gives three sequences $\n$, $\mathbf{l}$ and $\mathbf{f}$ such that for all $k\in \N$, \eqref{intercalate} and \eqref{newlowerbound} hold. Let $\|\cdot\|_{\diamond}$ be the seminorm on $\mathtt{c}_{00}$ given by 
		\begin{align}
			\left|\left|\left(a_j\right)_{j\in \N}\right|\right|_{\diamond}=&\frac{\epsilon}{p} \sup_{m\in\N} \frac{1}{n_m} \sup_{\substack{j_1, j_2\in A_{l_{m+1}}\\j_1\le j_2} }\left|\sum_{\substack{j_1\le r\le j_2}}a_{r}b_{l_{m+1},r} \right|,\label{Snorm02}
		\end{align}
		let $\X$ be the completion of $\mathtt{c}_{00}$ with the norm
		\begin{align}
			\left|\left|x\right|\right|=&\max\left\lbrace \left|\left|x\right|\right|_p,\left|\left|x\right|\right|_{\diamond}\right\rbrace,\label{norm2}
		\end{align}
		and let $\B=(\be_i)_i$ be the canonical unit vector basis with dual basis $\B^*=(\be_i^*)_i$; note that $\B$ is a bimonotone normalized Schauder basis for $\X$. \\
		To prove \ref{1bidem22}, fix $A\subset \N$ a nonempty finite set, and $\bfe \in \Psi_A$. By \eqref{firstboundforfundamentalfunction2} we have
		\begin{align*}
			\left|\left|\bff_{\bfe A}\right|\right|_{\diamond}\le \frac{1}{p}\sup_{l\in \N}\sum_{j\in A\cap A_l}\left|\be_j^*\left(\bff_{\bfe A}\right)\right|b_{l,j}\le \left|A\right|^{\frac{1}{p}}=\left|\left|\bff_{\bfe A}\right|\right|_{p}. 
		\end{align*}
		Hence, 
		\begin{align}
			\left|\left|\bff_{\bfe A}\right|\right|=&\left|\left|\bff_{\bfe A}\right|\right|_p=\left|A\right|^{\frac{1}{p}}.\label{fundamentalinX2}
		\end{align}
		Now choose $\bfe' \in \Psi_A$ so that $\varepsilon_j\varepsilon'_j=1$ for all $j\in A$. As \eqref{fundamentalinX2} also holds for if we replace $\bfe$ by $\bfe'$, we have
		\begin{align*}
			\left|A\right|=&\bff_{\bfe A}^*\left(\bff_{\bfe' A}\right)\le \left|\left|\bff_{\bfe A}^*\right|\right|\left|\left|\bff_{\bfe' A}\right|\right|\le \left|\left|\bff_{\bfe A}^*\right|\right|\left|A\right|^{\frac{1}{p}}, 
		\end{align*}
		so 
		\begin{align}
			\left|\left|\bff_{\bfe A}^*\right|\right|\ge& \left|A\right|^{\frac{1}{p'}}. \label{onesidep'2}
		\end{align}
		On the other hand, for all $x\in \X$ with finite support, we have $\left|\left|x\right|\right|\ge \left|\left|x\right|\right|_p$. Since $\B$ is a monotone Schauder basis, this entails that for all $x^*\in \X^*$ with finite support, $\left|\left|x^*\right|\right|\le \left|\left|x^*\right|\right|_{p'}$, so in particular $\left|\left|\bff_{\bfe A}^*\right|\right|\le \left|A\right|^{\frac{1}{p'}}$ which, when combined with \eqref{onesidep'2}, gives $\left|\left|\bff_{\bfe A}^*\right|\right|= \left|A\right|^{\frac{1}{p'}}$. Thus, we have obtained both equalities in \ref{1bidem22}, and it follows from them that $\B$ is $1$-bidemocratic. \\
		To prove \ref{notnQG222}, define, for each $l\in \dl$ and every $j\in A_{l}$,
		\begin{align*}
			a_{l,j}:=&
			\begin{cases}
				\frac{1}{k^{\frac{1}{p}}} & \text{ if }j=i_{l,k} \text{ with } 1\le k\le l, \\
				-\frac{1}{\left(m_{l,k}+t\right)^{\frac{1}{p}}} & \text{ if }j=i_{l,k}+1+t \text{ with } 1\le k\le l \text{ and }0\le t\le s_{l,k}-m_{l,k}.
			\end{cases}
		\end{align*}
		Now for each $l\in \dl$, let 
		$$
		\alpha_l:=\left(\sum_{j\in A_l}|a_{l,j}|^p\right)^{\frac{1}{p}}\qquad \text{and}\qquad x_l:=  \frac{1}{\alpha_l} \sum_{j\in A_l}a_{l,j}\be_j. 
		$$
		Clearly, $\alpha_l\ge 1$ and $\|x_l\|_p=1$ for all $l\in \dl$. Let us see that $\left\lbrace x_l\right\rbrace_{d\in \dl}$ is normalized: For $l\in \dl$, $x_l$ is supported in $A_l$, so the triangle inequality and the facts that $\epsilon<\frac{1}{2}$ and $p\alpha_l\ge 1$ entail that 
		\begin{align}
			\left|\left|x_l\right|\right|_{\diamond}\le &\frac{\epsilon}{p \alpha_l}\sup_{\substack{j_1,j_2\in A_l\\j_1\le j_2 }}\left|\sum_{j_1\le r\le j_2}a_{l,r}b_{l,r} \right|\le \sup_{j\in A_l}\left|\sum_{\substack{ r\in A_{l}\\ r\le j}}a_{l,r}b_{l,r} \right|. \label{diamondonormforxl2}
		\end{align}
		Note that for each $j\in A_l$,
		\begin{align*}
			a_{l,j}b_{l,j}=&
			\begin{cases}
				\frac{1}{k} & \text{ if }j=i_{l,k}\text{ with }1\le k\le l,\\
				-\frac{1}{\left(m_{l,k}+t\right)} & \text{ if }j=i_{l,k}+1+t\text{ with }1\le k\le l \text{ and }0\le t\le s_{l,k}-m_{l,k}.
			\end{cases}
		\end{align*}
		
		From the above equalities and \eqref{conditionclosesums2} it follows that for each $j\in A_l$, the following hold: 
		\begin{itemize}
			\item If  $j=i_{l,1}$, then 
			\begin{align*}
				\left|\sum_{\substack{r\in A_l\\r\le j}}a_{l,r}b_{l,r} \right|=a_{l,i_{l,1}}b_{l,i_{l,1}}=1.
			\end{align*}
			\item If there is $0\le d\le s_{l,1}-m_{l,1}$ such that $j=i_{l,1}+1+d$, then 
			\begin{align*}
				\left|\sum_{\substack{r\in A_l\\r\le j}}a_{l,r}b_{l,r} \right|=1-\sum_{t=0}^{d}\frac{1}{m_{l,1}+t}<1. 
			\end{align*}
			\item If there is $1<k\le l$ such that $j=i_{l,k}$, then 
			\begin{align*}
				\left|\sum_{\substack{r\in A_l\\r\le j}}a_{l,r}b_{l,r} \right|=&\left|\sum_{u=1}^{k-1}\left(\frac{1}{u}-\sum_{t=0}^{s_{l,u}-m_{l,u}}\frac{1}{m_{l,u}+t}\right)+\frac{1}{k}\right|\nonumber\\
				=&\left|\sum_{u=1}^{k-1}\left(\frac{1}{u}-\sum_{t=m_{l,u}}^{s_{l,u}}\frac{1}{t} \right)+\frac{1}{k}\right|\\
				\le&\sum_{u=1}^{k-1}\left|\frac{1}{u}-\sum_{t=m_{l,u}}^{s_{l,u}}\frac{1}{t}\right|+\frac{1}{k}\le \frac{k-1}{l}+\frac{1}{k}\le  \frac{k-1}{k}+\frac{1}{k}=1.
			\end{align*}
			\item If there are $1<k\le l$ and $0\le d\le s_{l,k}-m_{l,k}$ such that $j=i_{l,k}+1+d$, then again by  \eqref{conditionclosesums2}  we have
			\begin{align*}
				&\left|\sum_{\substack{r\in A_l\\r\le j}}a_{l,r}b_{l,r} \right|=\left|\sum_{u=1}^{k-1}\left(\frac{1}{u}-\sum_{t=0}^{s_{l,u}-m_{l,u}}\frac{1}{m_{l,u}+t} \right)\color{white}\right|+\color{white}\left|\color{black}\frac{1}{k} 
				-\sum_{t=0}^{d}\frac{1}{m_{l,k}+d} \right|\\
				&=\sum_{u=1}^{k-1}\left(\frac{1}{u}-\sum_{t=m_{l,u}}^{s_{l,u}}\frac{1}{t} \right)+\frac{1}{k}-\sum_{t=m_{l,k}}^{m_{l,k}+d} \frac{1}{t} \le \frac{k-1}{l}+\frac{1}{k}\le 1. 
			\end{align*}
		\end{itemize}
		Those are all possible cases, so by \eqref{diamondonormforxl2} we conclude that 
		$\left|\left|x_l\right|\right|_{\diamond}\le 1$, and then $\left|\left|x_l\right|\right|=1$.\\
		Now for each $l\in \dl$, define $B_l:=\left\lbrace i_{l,k}\right\rbrace_{1\le k\le l}$. Then $B_l$ is a greedy set for $x_l$. Indeed, given $j\in B_l$, there is $1\le k\le l$ such that $j=i_{l,k}$. Hence, 
		\begin{align*}
			\left|\be_{j}^*\left(x_l\right)\right|=&\frac{1}{\alpha_l}\left|a_{l,i_{l,k}}\right|=\frac{1}{\alpha_l k^{\frac{1}{p}}}\ge \frac{1}{\alpha_l l^{\frac{1}{p}}}.
		\end{align*}
		On the other hand, if $j'\in A_l\setminus B_l$, there is $1\le k\le l$ and $0\le t\le s_{l,k}-m_{l,k}$ such that $j'=i_{l,k}+1+t$. Thus, using \eqref{movetotheright12} we obtain
		\begin{align*}
			\left|\be_{j'}^*\left(x_l\right)\right|=&\frac{1}{\alpha_l\left(m_{l,k}+t\right)^{\frac{1}{p}}} \le \frac{1}{\alpha_l\left(l+1\right)^{\frac{1}{p}}}<\left|\be_{j}^*\left(x_l\right)\right|. 
		\end{align*}
		Let us estimate $\|P_{B_{l_{m+1}}} \left(x_{l_{m+1}}\right)\|$ for each $m\in \N$: By \eqref{conditionclosesums2} we have
		\begin{align}
			\alpha_l^p= \sum_{j\in A_l}|a_{l,j}|^{p}=\sum_{k=1}^{l}\frac{1}{k}+\sum_{k=1}^{l}\sum_{j=m_{l,k}}^{s_{l,k}}\frac{1}{j}\le 2 \sum_{k=1}^{d}\frac{1}{k}\qquad\forall l\in \dl.\nonumber 
		\end{align}
		Thus, using \eqref{newlowerbound} we obtain that for each $m\in \N$, 
		\begin{align*}
			\|P_{B_{l_{m+1}}}\left(x_{l_{m+1}}\right)\|\ge&\left|\left|P_{B_{l_{m+1}}}\left(x_{l_{m+1}}\right)\right|\right|_{\diamond}\ge \frac{\epsilon}{p n_m} \left|\sum_{j\in A_{l_{m+1}}}\be_j^*\left(P_{B_{l_{m+1}}}\left(x_{l_{m+1}}\right)\right)b_{l_{m+1},j}\right|\\
			=&\frac{\epsilon}{p n_m}\left(\alpha_{l_{m+1}}\right)^{-1}\sum_{k=1}^{l_{m+1}}a_{l_{m+1}, i_{l_{m+1},k}}b_{l_{m+1}, i_{l_{m+1},k}}\\
			\ge & \frac{\epsilon}{p n_m}\left(2\sum_{k=1}^{d_{m+1}}\frac{1}{k}\right)^{-\frac{1}{p}} \sum_{k=1}^{l_{m+1}}\frac{1}{k}=\frac{2^{-\frac{1}{p}}\epsilon}{p n_m} \left(\sum_{k=1}^{l_{m+1}}\frac{1}{k}\right)^{\frac{1}{p'}}\ge m+1, 
		\end{align*}
		where we used \eqref{newlowerbound} to get the last inequality. Since $\left(P_{B_{l_{m+1}}} \left(x_{l_{m+1}}\right)\right)_{m\in \N}$ is not bounded, $\B$ is not quasi-greedy.\\
		Finally, we prove \ref{tnqg22}: By Remark~\ref{remarkequiv}, it is enough to prove that $\B$  is $\n$-$t$-partially greedy, with constant as in the statement. To that end, fix $x\in \X$, $0<t\le 1$, $k_0\in \N$ and $A$ a $t$-greedy set for $x$ with $|A|=n_{m_{0}}\in \mathbf{n}$, and let $B:=\{1,\dots,n_{m_0}\}$. Clearly we may assume $A\not=B$. Choose $\delta>0$, $m_1\in \N$ and $j_1, j_2\in A_{l_{m_1+1}}$ so that 
		$$
		\left\Vert x-P_A(x)\right\Vert_{\diamond}\le \delta+\frac{\epsilon}{p} \frac{1}{n_{m_1}}\left|\sum_{\substack{r\in A_{l_{m_1+1}}\\j_1\le r\le j_2}}\be_r^*\left(x-P_A (x)\right) b_{l_{m_1+1},r} \right|.
		$$
		If $m_0\le m_1$, then by \eqref{separatingthesets2} and \eqref{intercalate} we get $A_{l_{m_1+1}}>l_{m_1+1}>n_{m_1}\ge n_{m_0}$. Thus, $A_{l_{m_1}+1}\cap B=\emptyset$, which implies that
		\begin{align}
			\left\Vert x-P_A(x)\right\Vert_{\diamond}\le& \delta+\frac{\epsilon}{p} \frac{1}{n_{m_1}}\left\vert \sum_{\substack{r\in A_{l_{m_1+1}}\\j_1\le r\le j_2}}\be_r^*(x-P_A(x)) b_{l_{m_1+1},r} \right\vert\nonumber\\
			=&\delta+\frac{\epsilon}{p} \frac{1}{n_{m_1}}\left\vert \sum_{\substack{r\in A_{l_{m_1+1}}\\j_1\le r\le j_2}}\be_r^*\left(x-P_B(x)-P_A\left(x) \right) \right)b_{l_{m_1+1},r} \right\vert\nonumber\\
			\le&\delta+\frac{\epsilon}{p} \frac{1}{n_{m_1}}\left(\left\vert \sum_{\substack{r\in A_{l_{m_1+1}}\\j_1\le r\le j_2}}\be_r^*\left(x-P_B(x)\right)b_{l_{m_1+1},r} \right\vert+\sum_{r\in A_{l_{m_1+1}}\cap A}\left\vert \be_r^*\left(x\right)\right\vert b_{l_{m_1+1},r} \right)\nonumber\\
			\le& \delta+\left\Vert x-P_B(x)\right\Vert_{\diamond}+\frac{\epsilon}{p} \frac{1}{n_{m_1}}\sum_{r\in A_{l_{m_1+1}} \cap A }\left\vert \be_r^*\left(x-P_B(x)\right)\right\vert b_{l_{m_1+1},r}\nonumber\\
			\le& \delta+\left\Vert x-P_B(x)\right\Vert+\frac{\epsilon}{p} \frac{\left\Vert x-P_B(x)\right\Vert}{n_{m_1}}|A|\le \delta +(1+\epsilon)\left\Vert x-P_B(x)\right\Vert. \label{smallerm0}
		\end{align}
		On the other hand, if $m_0\ge m_1+1$, \eqref{intercalate} implies that  $A_{l_{m_1+1}}\subset\{1,\dots,n_{m_1+1} \}\subset B$. Since 
		\begin{align}
			\left\Vert x-P_B(x)\right\Vert\ge& \left\Vert x-P_B(x)\right\Vert_p\ge  \|P_A(x-P_B(x))\|_{p}\ge \min_{j\in A}\left\vert\be_j^*\left(x\right)\right\vert \left\vert A\setminus B\right\vert^{\frac{1}{p}}\nonumber\\
			\ge& t\max_{j\not\in A}\left\vert\be_j^*\left(x\right)\right\vert \left\vert A\setminus B\right\vert^{\frac{1}{p}}=t\max_{j\not\in A}\left\vert\be_j^*\left(x\right)\right\vert \left\vert B\setminus A\right\vert^{\frac{1}{p}} , \nonumber
		\end{align}
		it follows that 
		\begin{align}
			\left\Vert x-P_A(x)\right\Vert_{\diamond}\le& \delta+\frac{\epsilon}{p} \frac{1}{n_{m_1}}\left\vert \sum_{\substack{r\in A_{l_{m_1+1}}\\j_1\le r\le j_2}}\be_r^*(x-P_A(x)) b_{l_{m_1+1},r} \right\vert\nonumber\\
			\le &\delta+\frac{\epsilon}{p} \frac{1}{n_{m_1}}\sum_{\substack{r\in A_{l_{m_1+1}}\cap \left(B\setminus A\right)}}\left\vert \be_r^*(x)\right\vert  b_{l_{m_1+1},r}\nonumber\\ 
			\le& \delta+\frac{\epsilon}{p} \frac{1}{n_{m_1}}\frac{\left\Vert x-P_B(x)\right\Vert}{t \left\vert B\setminus A\right\vert^{\frac{1}{p}}}\sum_{r\in A_{l_{m_1+1}}\cap \left(B\setminus A\right)}b_{l_{m+1},r}\nonumber\\
			\le&  \delta+\frac{\epsilon\left\Vert x-P_B(x)\right\Vert}{pt n_{m_1}\left\vert B\setminus A\right\vert^{\frac{1}{p}}}\sum_{k=1}^{\left\vert B\setminus A\right\vert}\frac{1}{k^{\frac{1}{p'}}}\le \delta+\frac{\epsilon\left\Vert x-P_B(x)\right\Vert}{pt n_{m_1}\left\vert B\setminus A\right\vert^{\frac{1}{p}}}\int_{0}^{\left\vert B\setminus A\right\vert}\frac{1}{x^{\frac{1}{p'}}}dx\nonumber\\
			=&\delta+\frac{\epsilon\left\Vert x-P_B(x)\right\Vert}{t n_{m_1}}\le \delta +\frac{\epsilon}{t}\left\Vert x-P_B(x)\right\Vert. \label{biggerm0}
		\end{align}
		Since $\delta$ is arbitrary, combining \eqref{smallerm0} and \eqref{biggerm0} we conclude that $\B$ is $\n$-$t$-partially greedy, with constant as in the statement, and the proof is complete. 
	\end{proof}
	
	We turn our attention now to sequences with bounded quotient gaps. For such sequences, we do have a characterization of $\n$-strong partially greedy  bases that is an extension of that given in \cite[Theorem 3.4]{DKKT}. In order to prove it,  we need two auxiliary lemmas. The first of them is an extension of \cite[Lemma 2.1]{DKO2015} to complex spaces that slightly improves the bound obtained in \cite[Lemma 6.3]{DKO2015}.

	\begin{lemma} \label{lemmaQG}Let $\B$ be a quasi-greedy basis. If $0<t\leq 1$ and $A$ is a $t$-greedy set for $x\in \X$, then 
		$$
		\|P_A(x)\|\le (\C_q+4 t^{-1}\C_q^2)\|x\|.
		$$
	\end{lemma}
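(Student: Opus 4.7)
The plan is to prove the lemma by splitting the $t$-greedy set into a strict greedy part (handled directly by quasi-greediness) and a ``borderline'' part (handled by an auxiliary-element construction that converts the $t$-greedy condition into a true greedy condition).

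By density I may assume $x$ has finite support. Let $\alpha := \max_{i \notin A}|\be_i^*(x)|$. If $\alpha = 0$ then $x = P_A(x)$ and the conclusion is immediate (since necessarily $\C_q \ge 1$), so I assume $\alpha > 0$ and, by homogeneity, normalize so that $\alpha = 1$. Thus $|\be_i^*(x)| \ge t$ for $i \in A$ and $|\be_i^*(x)| \le 1$ for $i \notin A$. I would then split $A = B \cupdot C$ with $B := \{i \in A : |\be_i^*(x)| > 1\}$ and $C := A \setminus B$. Since every index in $B$ carries a coefficient strictly larger than every coefficient outside $B$, the set $B$ is a (strict) greedy set for $x$, so quasi-greediness gives $\|P_B(x)\| \le \C_q\|x\|$, which accounts for the $\C_q$ summand in the final bound.

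For $\|P_C(x)\|$ I would construct the auxiliary element $y := (x - P_B(x)) + t^{-1}\bff_{\bfe C}$ with $\e_i = \sgn(\be_i^*(x))$ on $C$. Then $|\be_i^*(y)| = |\be_i^*(x)| + t^{-1} \ge t + t^{-1} \ge 2$ for $i \in C$, $|\be_i^*(y)| \le 1$ off $A$, and $\be_i^*(y) = 0$ on $B$. Hence $C$ is a strict greedy set for $y$, and quasi-greediness together with the triangle bound $\|x - P_B(x)\| \le (1 + \C_q)\|x\|$ yields, after applying the triangle inequality once more, an estimate of the form
\[
\|P_C(x)\| \;\le\; \C_q(1 + \C_q)\|x\| \;+\; (1 + \C_q)\,t^{-1}\,\|\bff_{\bfe C}\|.
\]

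The remaining task is to control $\|\bff_{\bfe C}\|$ by $\|x\|$ up to a constant involving $\C_q$ and $\kappa$. Since each $\e_i$ has modulus one, the standard decomposition of a unit-modulus complex scalar as at most $\kappa$ times a convex combination of $\{\pm 1,\pm i\}$ gives $\|\bff_{\bfe C}\| \le \kappa \max_{\bfu \in \{\pm 1,\pm i\}^C}\|\bff_{\bfu C}\|$. For a $\bfu$ realizing this maximum, I would exploit the lower bound $|\be_i^*(x)| \ge t$ on $C$ to write $\bff_{\bfu C} = t^{-1}\sum_{i \in C}\gamma_i\,\be_i^*(x)\be_i$ with $|t\gamma_i|\le 1$, apply the sign-decomposition trick a second time to transfer the bound to a combination of the form $\sum_i v_i \be_i^*(x)\be_i$ with $\bfv \in \{\pm 1,\pm i\}^C$, and control this last quantity by combining quasi-greediness on a suitable auxiliary element with the already-obtained estimate for $\|P_B(x)\|$. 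Two applications of the sign-decomposition (one to handle the phases of the $\e_i$, one for the multipliers $\gamma_i$) produce the factor $\kappa^2 \le 4$, while two invocations of quasi-greediness and the factor $t^{-1}$ from the amplification of the $C$-coefficients yield the $\C_q^2 t^{-1}$ dependence. Assembling these estimates with the bound on $P_B(x)$ gives the lemma.

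The main obstacle is the closure of the estimate for $\|\bff_{\bfu C}\|$: the natural single-auxiliary-element application of quasi-greediness (e.g.\ to $(x - P_A(x)) + \bff_{\bfu C}$) produces a circular inequality of the form $(1 - \C_q)\|\bff_{\bfu C}\| \le \C_q\|x - P_A(x)\|$, which is vacuous when $\C_q \ge 1$ (the typical regime). Resolving this requires combining two estimates (rather than one) and performing the sign decomposition at two stages, which is precisely what is responsible for the quadratic dependence $\C_q^2$ and the factor $\kappa^2 = 4$ in the stated bound.
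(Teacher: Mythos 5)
Your overall strategy (split $A=B\cupdot C$ into the strictly dominant part $B$ and the borderline part $C$, handle $B$ directly by quasi-greediness and $C$ via an auxiliary element) is sound as far as it goes, and the estimates you actually write down are correct: $B$ is indeed a greedy set for $x$, and your auxiliary element $y$ correctly yields $\|P_C(x)\|\le \C_q(1+\C_q)\|x\|+(1+\C_q)t^{-1}\|\bff_{\bfe C}\|$. But there is a genuine gap exactly where you place the ``main obstacle'': you never actually establish an inequality of the form $\|\bff_{\bfe C}\|\le K\|x\|$. You correctly note that the single-auxiliary-element attempt is circular, and then only assert in outline that ``combining two estimates'' and ``two sign decompositions'' will close it --- no concrete auxiliary element or non-circular inequality is exhibited, and this step is the entire analytic content of the lemma beyond the trivial part. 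The constant bookkeeping also does not cohere: you attribute the factor $4$ in $4t^{-1}\C_q^2$ to $\kappa^2$, but the stated bound holds with the same factor $4$ when $\F=\R$, where $\kappa^2=1$; and the terms you have already accumulated, $\bigl(\C_q+\C_q(1+\C_q)\bigr)\|x\|+(1+\C_q)t^{-1}\|\bff_{\bfe C}\|$, combined with any realistic bound on $\|\bff_{\bfe C}\|$ (the standard ones carry an extra factor of $t^{-1}$ or of $\C_q^2$), would overshoot the target $\C_q+4t^{-1}\C_q^2$.

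For comparison, the paper does not reprove the real case at all: it quotes \cite[Lemma 2.1]{DKO2015} for $\F=\R$ (whose proof is essentially the decomposition you are attempting, carried to completion) and reduces the complex case to the real one by replacing each $\be_i$ with $\sgn(\be_i^*(x))\,\be_i$, so that all coordinates of $x$ become nonnegative reals, and then restricting to the real-linear subspace of vectors with real coordinates, on which the rotated system is again a quasi-greedy basis with the same constant $\C_q$. That rotation trick is precisely what allows the paper to obtain the real-case constant verbatim over $\mathbb{C}$ without paying any factor of $\kappa$. To repair your argument you would need either to reproduce the full proof of \cite[Lemma 2.1]{DKO2015} --- in particular the non-circular estimate for $\|\bff_{\bfe C}\|$ --- or to adopt the paper's reduction to the real case.
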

	\begin{proof}
		For the case $\F=\R$, this result is just \cite[Lemma 2.1]{DKO2015}, and then the case $\F=\mathbb{C}$ follows by the same argument used in Proposition~\ref{proposition: dem+nqg->ntqg}.
	\end{proof}

	Next, we prove a lemma that holds for any sequence $\n$, and for which we adapt part of the proof of \cite[Theorem 3.4]{DKKT} to our context.

	\begin{lemma}\label{lemmaQGnIIconservative}Let $\B$ be a basis. If $\B$ is quasi-greedy and $\n$-order-conservative, it is $\n$-$t$-partially greedy for all $0<t\le 1$. 
	\end{lemma}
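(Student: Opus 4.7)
The plan is to bound $\|x - \G_n^t(x)\| = \|x - P_A(x)\|$ for $A$ a $t$-greedy set of $x$ with $|A| = n \in \n$, by means of the standard decomposition: writing $N := \{1, \ldots, n\}$, $B := A \setminus N$ and $C := N \setminus A$, the identity $|A| = |N|$ forces $|B| = |C|$, and
$$x - P_A(x) = (x - S_n(x)) + P_C(x) - P_B(x).$$
It thus suffices to dominate $\|P_B(x)\|$ and $\|P_C(x)\|$ by constant multiples of $\|x - S_n(x)\|$. The case $B = \emptyset$ forces $A = N$ and the conclusion is trivial, so one may assume $B$ and $C$ are both nonempty; in that case $C \le n < B$ with $n \in \n$, which is precisely the configuration required by $\n$-order-conservativeness.

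For the first summand, the first step is to verify that $B$ is itself a $t$-greedy set for $x - S_n(x)$: in $x - S_n(x)$ all coordinates indexed by $N$ vanish, and for $i \in B \subseteq A$ and $j > n$ with $j \notin B = A \setminus N$ one has $j \notin A$, so the $t$-greedy inequality for $A$ with respect to $x$ transfers to $B$ with respect to $x - S_n(x)$. Then Lemma~\ref{lemmaQG}, which applies since $\B$ is quasi-greedy, yields
$$\|P_B(x)\| = \|P_B(x - S_n(x))\| \le (\C_q + 4t^{-1}\C_q^2)\|x - S_n(x)\|.$$

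For the second summand, set $\alpha := \min_{i \in A}|\be_i^*(x)|$; since $A$ is a $t$-greedy set and $C \cap A = \emptyset$, one has $\max_{i \in C}|\be_i^*(x)| \le t^{-1}\alpha$. Quasi-greediness entails the UL property with some constants $\C_1, \C_2$; applying its upper estimate to $P_C(x)$ and its lower estimate to $P_B(x)$ (noting that $\alpha \le \min_{i \in B}|\be_i^*(x)|$ since $B \subseteq A$), and invoking $\|\one_C\| \le \Delta_{oc}\|\one_B\|$ from $\n$-order-conservativeness, one obtains the chain
$$\|P_C(x)\| \le \C_2 t^{-1}\alpha \|\one_C\| \le \C_2 t^{-1}\Delta_{oc}\,\alpha\|\one_B\| \le \C_1 \C_2 t^{-1}\Delta_{oc}\|P_B(x)\|.$$
Combining the three estimates via the triangle inequality yields a value of $\C_{p,t}$ depending only on $\C_q$, $\Delta_{oc}$, $t$, and the UL constants.

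The main conceptual obstacle is that $\n$-order-conservativeness is a statement about unsigned sums $\one_C, \one_B$, whereas the quantities to be controlled, $P_C(x)$ and $P_B(x)$, involve arbitrary scalar coefficients; the UL property—available for free thanks to quasi-greediness—is the bridge, since it both turns $P_C(x)$ into something proportional to $\max_{i \in C}|\be_i^*(x)|\|\one_C\|$ and recovers $\alpha\|\one_B\|$ from $\|P_B(x)\|$.
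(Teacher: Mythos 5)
Your proposal is correct and follows essentially the same route as the paper: the same decomposition $x-P_A(x)=(x-S_n(x))+P_{N\setminus A}(x)-P_{A\setminus N}(x)$, the same use of Lemma~\ref{lemmaQG} on $P_{A\setminus N}(x)=P_{A\setminus N}(x-S_n(x))$, and the same UL-type chain (upper estimate on $P_{N\setminus A}$, order-conservativeness to pass from $\|\one_{N\setminus A}\|$ to $\|\one_{A\setminus N}\|$, lower estimate to recover $\|P_{A\setminus N}(x)\|$); the paper simply instantiates your generic UL constants $\C_1,\C_2$ with the explicit bounds $4\kappa\C_q^2$ and $2\kappa\C_q$ from the quasi-greedy convexity results it cites. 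Your explicit verification that $A\setminus N$ is a $t$-greedy set for $x-S_n(x)$ is a detail the paper leaves implicit, and is a welcome addition.
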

	\begin{proof}
		Fix $x\in  \X$, $n\in \n$, and let $B:=\{i\in \N: i\le n\}$ and $A$ a $t$-greedy set of order $n$ for $x$. If $A=B$,  trivially $\|x-P_A(x)\|\le \|x-S_n(x)\|$. If $A\not=B$, then $B\setminus A\le n<A\setminus B$ and $|B\setminus A|=|A\setminus B|$. Thus,
		
		\begin{eqnarray}
			\|P_{B\setminus A}(x)\|&\stackrel{\text{a)}}{\leq}& 2\kappa\C_q \max_{i\in  B\setminus A}|\be_i^*(x)| \|\bff_{B \setminus A}\|\le 2\kappa t^{-1}\C_q\Delta_{oc}\min_{i\in  A\setminus B}|\be_i^*(x)\|\bff_{A \setminus B}\|\nonumber\\
			&\stackrel{\text{b)}}{\leq}& 8\kappa^2 t^{-1}\C_q^3\Delta_{oc}\|P_{A\setminus B}(x)\|= 8\kappa^2 t^{-1}\C_q^3\Delta_{oc}\|P_{A\setminus B}(x-S_n(x))\|,\nonumber
		\end{eqnarray}
		where in Step a) we used \cite[Proposition 2.1.11]{Bt}, and in Step b) we used \cite[Corollary 2.1.15]{Bt}. Since
		\begin{eqnarray*}
			\|P_{A\setminus  B}(x)\|&=&\|P_{A\setminus B}(x-S_n(x))\|\le (\C_q+4 t^{-1}\C_q^2)\|x-S_n(x)\| \quad\text{ (by Lemma~\ref{lemmaQG})},
		\end{eqnarray*}
		and 
		$$
		\|x-P_A(x)\|\le \|x-S_n(x)\|+\|P_{A\setminus B}(x)\|+\|P_{B\setminus A}(x)\|,
		$$
		the proof is complete. 
	\end{proof}

	Now we can characterize $\n$-strong partially greedy bases for sequences with bounded quotient gaps.

	\begin{proposition}\label{propositionboundedpartially} Let $\B$ be a seminormalized basis. If $\n$ has bounded quotient gaps, the following are equivalent:
		\begin{enumerate}[\rm \color{red}i)\color{black}]
			\item \label{QGC}$\B$ is quasi-greedy and  $\n$-order-superconservative. 
			\item \label{QGCN}$\B$ is $\n$-quasi-greedy and  $\n$-order-conservative.
			\item \label{SnPG}$\B$ is $\n$-strong partially greedy. 
		\end{enumerate}
	\end{proposition}
	\begin{proof}
		The implication \ref{QGC} $\Longrightarrow$ \ref{QGCN} is immediate, whereas \ref{QGCN} $\Longrightarrow$ \ref{QGC} follows by Theorem~\ref{theoremnQGboundedgaps->QG!!!} and the fact that $\n$-order-conservativeness plus unconditionality for constant coefficients entail $\n$-order-superconservativeness.\\
		The implication \ref{SnPG} $\Longrightarrow$ \ref{QGCN} follows by Lemma~\ref{lemmanpartiallygreedynIIconservative}, 
		whereas 	\ref{QGC} $\Longrightarrow$ \ref{SnPG} follows by  Lemma~\ref{lemmaQGnIIconservative}. 
	\end{proof}
	
	It is known that for any sequence $\n$ with bounded quotient gaps and Schauder bases, the properties of being $\n$-democratic, $\n$-conservative, and several others studied in the context of greedy approximation are equivalent to their respective standard counterparts ( \cite[Propositions 2.2, 2.4, 3.6, Lemma 3.10]{BB2}  ). By Theorem~\ref{theoremnQGboundedgaps->QG!!!}, the corresponding equivalence holds for $\n$-quasi-greediness and quasi-greediness, even without a Schauder hypothesis. So, it is natural in this context to ask whether, for such sequences, $\n$-(strong) partially greedy (Schauder) bases are also partially greedy. The results below show that the answers are negative, and characterize the sequences for which equivalence does hold. For our results, we  need the following classification, which was introduced in \cite{BB2} to study extensions of some of the properties that appear naturally in the study of the TGA, such as democracy, unconditionality for constant coefficients and symmetry for largest coefficients.

	\begin{definition}\label{definitiondifferencegaps}
		Let $\n=(n_k)_{k\in \N}$ be a strictly increasing sequence of natural numbers. The additive gaps of $\n$ are the differences $n_{k+1}-n_k$ when $n_{k+1}>n_k+1$. We say that $\n$ has arbitrarily large additive gaps if
		$$\limsup_{k\rightarrow\infty}n_{k+1}-{n_k}=+\infty.$$
		
		\noindent Alternatively, for $l\in\mathbb N_{>1}$, we say that $\n$ has $l$-bounded additive gaps if it has gaps and $ n_{k+1}-n_{k}\le l$ for all $k\in\mathbb N$, and we say that it has bounded additive gaps if it has $l$-bounded additive gaps for some natural number $l\ge 2$.
	\end{definition}
	
	\begin{proposition}\label{propositionshorter} Suppose $\n$ has arbitrarily large additive gaps. There is a Banach space $\X$ with a $1$-unconditional basis $\B$ that is $\n$-partially greedy but not conservative, and thus not partially greedy. 
	\end{proposition}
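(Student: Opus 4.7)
The plan is to construct an explicit $1$-unconditional basis $\B$ witnessing the claim. Using the hypothesis that $\n$ has arbitrarily large additive gaps, I would first extract a subsequence $(k_j)$ along which the widths $m_j := n_{k_j+1} - n_{k_j}$ of the associated gap intervals $H_j := (n_{k_j}, n_{k_j+1}]$ grow as rapidly as the later estimates require. In each such $H_j$ I split off a left half $L_j$ and a right half $R_j$, each of size $\lfloor m_j/2\rfloor$. The observation motivating the whole construction is that because $\n\cap (n_{k_j}, n_{k_j+1}) = \emptyset$, there is no $n\in\n$ separating $L_j$ from $R_j$, so an unbounded ratio $\|\one_{L_j}\|/\|\one_{R_j}\|$ can violate conservativeness without interacting with $\n$-order-superconservativeness at all.

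The heart of the proof is the definition of a $1$-unconditional semi-normalized norm on $c_{00}$ for which $\|\one_{L_j}\|/\|\one_{R_j}\|\to\infty$ but, simultaneously, $\|\one_A\|\le C\|\one_B\|$ holds uniformly for every $A,B$ with $|A|=|B|$, $A\le n_{k^*}\in\n$ and $B>n_{k^*}$. Concretely, I would combine a family of weighted ``heavy'' functionals supported on the sets $L_j$---which produces the within-gap imbalance---with a baseline contribution calibrated so that no set $B$ of the prescribed size placed to the right of any $n_{k^*}$ can have too small a norm, regardless of whether $B$ concentrates inside a single later block $H_{j'}$ or is spread thinly across many blocks. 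The verification then reduces to a case analysis depending on how $A$ lies with respect to the blocks $H_j$ that intersect $\{1,\dots,n_{k^*}\}$ and on the distribution of $B$ in $(n_{k^*},\infty)$.

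Once the construction is in place, the conclusion assembles itself: $1$-unconditionality gives $\B$ quasi-greedy with constant $1$, so Lemma~\ref{lemmaQGnIIconservative}, applied together with the $\n$-order-superconservativeness established above, yields that $\B$ is $\n$-$t$-partially greedy for every $0<t\le 1$, and in particular $\n$-partially greedy; meanwhile conservativeness fails by comparing $A_j=L_j$ with $B_j=R_j$ and letting $j\to\infty$, so that by Lemma~\ref{lemmanpartiallygreedynIIconservative} applied with $\n=\N$ the basis cannot be partially greedy. The main obstacle is the balancing act in step two: the baseline norm must be large enough that every possible $B$ past $n_{k^*}$ dominates any admissible $A\le n_{k^*}$ of the same cardinality, while remaining small enough to allow the within-gap ratios $\|\one_{L_j}\|/\|\one_{R_j}\|$ to diverge, and this compatibility is exactly what forces the fast growth of the subsequence $(m_{k_j})$ one extracts at the very beginning.
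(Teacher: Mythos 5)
Your overall architecture coincides with the paper's: extract a subsequence of $\n$ along which the additive gaps $n_{k_j+1}-n_{k_j}$ grow fast, place two adjacent blocks of equal size inside each gap interval $(n_{k_j},n_{k_j+1})$, make the left block much heavier than the right one to destroy conservativeness, observe that no element of $\n$ separates the two blocks so that $\n$-order-conservativeness is untouched, and then assemble the conclusion exactly as you do --- Lemma~\ref{lemmaQGnIIconservative} for the positive direction (unconditional $\Rightarrow$ quasi-greedy, plus $\n$-order-conservative $\Rightarrow$ $\n$-partially greedy) and Lemma~\ref{lemmanpartiallygreedynIIconservative} with $\n=\N$ (together with Remark~\ref{remarkIIsupercon}) for the negative one. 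This is precisely the plan the paper carries out, with your $L_j$ and $R_j$ playing the roles of its blocks $T_j$ and $D_j$.

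The gap is that the norm is never produced, and the ``balancing act'' you yourself flag as the main obstacle is the entire technical content of the proposition; asserting that a baseline ``can be calibrated'' so that every $B$ past $n$ dominates every admissible $A\le n$ while the within-gap ratios still diverge is a statement of the desiderata, not a proof that they are compatible. The paper's mechanism is worth recording because it is not the one you propose: it fixes a \emph{decreasing sequence of exponents} $p_k\downarrow 1$ with $\limsup_j j\bigl(\tfrac{1}{p_{k_j+1}}-\tfrac{1}{p_{k_j}}\bigr)=+\infty$, takes as baseline the functionals $a\mapsto\bigl(\sum_{i\in S}|a_i|^{p_k}\bigr)^{1/p_k}$ over all $S$ with $|S|=10^k$ and $n_k<S$, and as heavy functionals the same expression with exponent $p_{k_j+1}$ over the block $T_j$ of the first $10^j$ coordinates after $n_{k_j}$. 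Because the heavy functional on $T_j$ and the baseline functionals indexed by $k_j+1$ share the exponent $p_{k_j+1}$, the comparison for $A\le n<B$ with $|A|=|B|$ is term-by-term and gives $\n$-order-conservativeness with constant $1$; meanwhile the next block $D_j$ of size $10^j$ in the same gap is only seen by baseline functionals with exponents $p_k\ge p_{k_j}$, so $\|\one_{T_j}\|/\|\one_{D_j}\|\ge 10^{j(1/p_{k_j+1}-1/p_{k_j})}$, which is unbounded by the choice of $(p_k)_k$. In particular, your closing claim that the compatibility is ``exactly what forces the fast growth of the subsequence'' is not quite right: the fast growth only supplies room for the blocks ($|T_j|\to\infty$ inside a single gap); the divergence of the ratio comes from the slowly varying exponents, and without some second mechanism of this kind the two requirements you list do genuinely pull against each other.
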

	\begin{proof}
		Choose a subsequence $(n_{k_j})_j$ so that for all $j$, 
		\begin{equation}
			n_{k_{j}+1}>n_{k_j}+3 \cdot 10^j,  \label{fastenough2}
		\end{equation}
		and a decreasing sequence of positive numbers $(p_k)_k$ so that
		\begin{equation}
			\lim_{k\to +\infty}p_k=1\qquad \text{ and }\qquad \limsup_{j\to +\infty} j\left(\frac{1}{p_{k_j+1}}-\frac{1}{p_{k_j}}\right)=+\infty. \label{slowenough}
		\end{equation}
		For instance, a possible choice is to define 
		$$
		p_{k_{2^j}}:=1+\frac{1}{2j}, \qquad p_{k_{2^j}+1}:=1+\frac{1}{2j+1}
		$$
		for all $j$, and then complete the sequence so that $(p_k)_k$ is decreasing.\\For each $k, j\in \N$, let 
		$$
		\mathcal{S}_k:=\{S\subset \N: |S|=10^{k} \text {and } n_k<S\}, \qquad T_j:=\{n_{k_j}+1,\dots, n_{k_j}+10^j\},
		$$
		and define $\X$ as the completion of $\mathtt{c}_{00}$ with the following norm: 
		\begin{equation}
			\|(a_i)_i\|:=\max\left\lbrace\|(a_i)_i\|_{\infty}, \sup_{k \in \N} \sum\limits_{S\in \mathcal{S}_k}\left(\sum\limits_{i\in S}|a_i|^{p_k}\right)^{\frac{1}{p_k}}, \sup_{j\in\N}\left(\sum\limits_{i\in T_j}|a_i|^{p_{k_j+1}}\right)^{\frac{1}{p_{k_j+1}}}\right\rbrace.\nonumber
		\end{equation}
		It is clear that the unit vector basis $(\be_i)_i$ is a $1$-unconditional Schauder basis. To prove that it is $\n$-partially greedy, choose $A, B\subset \N$ and $n \in \n$ as in Definition~\ref{definitionnIIconservative}. Fix $j\in \N$, and suppose $T_j\cap A\not=\emptyset$. From \eqref{fastenough2} and the fact that $A\le n$ it follows that $T_j<n_{k_j+1}\le n$. Since $n<B$ and $|A|=|B|$, there is $S\in S_{n_{k_{j}+1}}$ such that $|S\cap B|\ge |T_j\cap A|$. Then, 
		\begin{equation}
			\left(\sum\limits_{i\in T_j}|\be_i^*(\bff_{A})|^{p_{k_j+1}}\right)^{\frac{1}{p_{k_{j}+1}}}\le \left(\sum\limits_{i\in S}|\be_i^*(\bff_{B})|^{p_{k_j+1}}\right)^{\frac{1}{p_{k_{j}+1}}}\le \|\bff_{ B}\|. \label{part12}
		\end{equation}
		Now fix $k\in \N$ and $S\in S_k$. If $A\cap S\not=\emptyset$, then as $A<B$ and $|A|=|B|$ there is $S'\in S_k$ such that $|B\cap S'|\ge |A\cap S|$. Hence, 
		\begin{equation*}
			\left(\sum\limits_{i\in S}|\be_i^*(\bff_{A})|^{p_{k}}\right)^{\frac{1}{p_{k}}}\le \left(\sum\limits_{i\in S'}|\be_i^*(\bff_{B})|^{p_{k}}\right)^{\frac{1}{p_{k}}}\le \|\bff_{ B}\|. 
		\end{equation*}
		From this and \eqref{part12}, taking supremum it follows that 
		$$
		\|\bff_{ A}\|\le \|\bff_{ B}\|.
		$$
		Hence, $\B$ is $1$-$\n$-order-conservative. Since it is also unconditional, by Lemma~\ref{lemmaQGnIIconservative} it is $\n$-partially greedy. To finish the proof, for each $j$ define
		$$
		D_j:=\{n_{k_j}+10^j+1, \dots, n_{k_j}+10^j+10^j \}.
		$$
		For each $j$ and each $k\ge {k_{j}+1}$, by  \eqref{fastenough2} we have $D_j<n_k$, so $D_j\cap S=\emptyset$ for all $S\in S_k$. Thus, 
		$$
		\sup_{k \in \N} \sum\limits_{S\in \mathcal{S}_k}\left(\sum\limits_{i\in S}|\be_i^*(\bff_{D_j})|^{p_k}\right)^{\frac{1}{p_k}}=\sup_{1\le k\le k_j}\sum\limits_{S\in \mathcal{S}_k}\left(\sum\limits_{i\in S}|\be_i^*(\bff_{D_j})|^{p_k}\right)^{\frac{1}{p_k}}\le  |D_j|^{\frac{1}{p_{k_j}}}=  10^{\frac{j}{p_{k_j}}}.
		$$
		Also, by \eqref{fastenough2}, we have $T_i\cap D_j=\emptyset$ for all $i,j$. Hence, 
		$$
		\|\bff_{D_j}\|\le  10^{\frac{j}{p_{k_j}}}
		$$
		for all $j$. On the other hand, 
		$$
		\|\bff_{T_j}\|\ge \left(\sum\limits_{i\in T_j}|\be_i^*(\bff_{T_j})|^{p_{k_j+1}}\right)^{\frac{1}{p_{k_j+1}}}=10^{\frac{j}{p_{k_{j+1}}}}.
		$$
		Therefore, by \eqref{slowenough},
		$$
		\limsup_{j\to +\infty}\frac{\|\bff_{T_j}\|}{\|\bff_{D_j}\|}\ge \limsup_{j\to +\infty} 10^{\frac{j}{p_{k_j+1}}-\frac{j}{p_{k_j}}}=+\infty. 
		$$
		As $T_j<D_j$ and $|T_j|=|D_j|$ for all $j$, we conclude that $\B$ is not conservative, and thus not partially greedy. 
	\end{proof}
	\begin{remark}\label{remarknonconservative}\rm Proposition~\ref{propositionshorter} and \cite[Lemma 3.10]{BB2} together imply that for sequences with bounded quotient gaps but arbitrarily large additive gaps, $\n$-strong partially greedy Schauder bases can fail to be $\n$-conservative, which suggests $\n$-order-conservativeness is in a sense a better extension of conservativeness to our context than the more seemingly natural $\n$-conservativeness. 
	\end{remark}
	For sequences with bounded additive gaps, we have the following result. 
	\begin{lemma}\label{lemmaboundedadditive} Let $\B$ be an $\n$-strong partially greedy Markushevich basis. If $\n$ has bounded additive gaps, it is strong partially greedy. 
	\end{lemma}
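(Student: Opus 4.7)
My plan is to follow the modification strategy used in the proof of \cite[Proposition 4.1]{O2015}: embed the problem for $x$ at a level $n \notin \n$ into an instance of the hypothesis applied at a nearby level $n' \in \n$ by passing from $x$ to a suitable perturbation $v$. Fix $x \in \X$, $n \in \N \setminus \n$ (the case $n \in \n$ being immediate), and a greedy set $A$ for $x$ of order $n$. Let $n' \in \n$ be the smallest element with $n' \ge n$; the $m$-bounded additive gaps condition gives $n' - n \le M_0$ for a constant $M_0$ depending only on $\n$ (one can take $M_0 = \max\{n_1,m\}$).

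Choose $B \subset \N$ with $|B| = n'-n$, $B$ disjoint from $\supp(x) \cup A$, and $\min B > n'$, set $M := \max_{i \notin A}|\be_i^*(x)|$, and let $v := x + M \bff_B$. Greediness of $A$ for $x$ gives $\min_{j \in A}|\be_j^*(x)| \ge M$, so the coefficients of $v$ on $A \cupdot B$ are all at least $M$ while those off $A \cupdot B$ are at most $M$; hence $A \cupdot B$ is a greedy set for $v$ of order $n'$. The support conditions moreover yield $v - P_{A \cupdot B}(v) = x - P_A(x)$. Applying the $\n$-strong partially greedy hypothesis to $v$ at level $n'$ gives
\begin{equation*}
\|x - P_A(x)\| \le \C_{sp}\inf_{0 \le k \le n'}\|v - S_k(v)\| \le \C_{sp}\inf_{0 \le k \le n}\|v - S_k(v)\|.
\end{equation*}

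Since $\min B > n' \ge k$ for every $0 \le k \le n$, we have $S_k(v) = S_k(x)$, and thus $\|v - S_k(v)\| \le \|x - S_k(x)\| + M(n'-n)\alpha_1$. It remains to show $M \le \alpha_2\|x-S_k(x)\|$ for each such $k$. For $0 \le k < n$, since $|A| = n > k$ there is $j_0 \in A$ with $j_0 > k$; combining greediness of $A$ with $\be_{j_0}^*(S_k(x)) = 0$ yields $M \le |\be_{j_0}^*(x)| = |\be_{j_0}^*(x-S_k(x))| \le \alpha_2\|x-S_k(x)\|$. The main obstacle is the remaining case $k = n$ with $A = \{1,\dots,n\}$, the only situation in which $A$ has no element exceeding $n$ and the greediness-based bound for $M$ fails; this is resolved by the direct computation $M = \max_{i > n}|\be_i^*(x-S_n(x))| \le \alpha_2\|x-S_n(x)\|$. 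Combining these estimates and taking the infimum over $0 \le k \le n$ produces the strong partially greedy inequality with constant at most $\C_{sp}(1+M_0\alpha_1\alpha_2)$.
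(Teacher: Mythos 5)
Your argument takes a genuinely different route from the paper's. The paper proceeds through the characterization machinery: it notes that an $\n$-strong partially greedy basis is $\n$-partially greedy and $\n$-quasi-greedy, deduces $\n$-order-conservativeness from Lemma~\ref{lemmanpartiallygreedynIIconservative}, upgrades $\n$-quasi-greediness to quasi-greediness via \cite[Proposition 4.1]{O2015}, shows by a direct computation that $\n$-order-conservativeness plus bounded additive gaps gives conservativeness, and finally invokes \cite[Proposition 5.1]{BBL} to conclude. You instead perturb $x$ by $M\bff_B$ so that the given greedy set of order $n\notin\n$ extends to a greedy set of order $n'\in\n$, apply the hypothesis directly, and control the error term $M(n'-n)\alpha_1$ via the elementary bound $M\le \alpha_2\|x-S_k(x)\|$ (your case analysis at $k=n$ with $A=\{1,\dots,n\}$ is the right way to handle the one configuration where the greediness-based bound fails). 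This is self-contained --- it avoids both external citations --- and it produces the explicit constant $\C_{sp}(1+M_0\alpha_1\alpha_2)$, which the paper's chain of implications does not readily yield.

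There is, however, one step that can fail as written: the choice of $B$ disjoint from $\supp(x)\cup A$ with $\min B>n'$. For a Markushevich basis, $\supp(x)$ may be all of $\N$ (or cofinite), in which case no such $B$ exists; and disjointness from $\supp(x)$ is precisely what you use to get $\be_i^*(v)=M$ on $B$, to see that $A\cupdot B$ is greedy for $v$, and to obtain the identity $v-P_{A\cupdot B}(v)=x-P_A(x)$. Two standard repairs are available. One is to reduce to finitely supported $x$ by a density/perturbation argument in the spirit of \cite[Lemma 3.2]{BDKOW}, which the paper invokes repeatedly for analogous purposes. The other is to drop the disjointness from $\supp(x)$: take $B$ with $B\cap A=\emptyset$ and $\min B>n'$, set $v:=x-P_B(x)+M\bff_{B}$ (so that again $\be_i^*(v)=M$ on $B$), and absorb the extra error $\|P_B(x)\|\le M_0\alpha_1 M\le M_0\alpha_1\alpha_2\|x-S_k(x)\|$ into the final constant. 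With either fix your proof is correct.
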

	\begin{proof}
		By Lemma~\ref{lemmanpartiallygreedynIIconservative}, $\B$ is $\n$-order-conservative. As $\B$ is $\n$-quasi-greedy, by Theorem~\ref{theoremnQGboundedgaps->QG!!!} (or \cite[Proposition 4.1]{O2015}) it is quasi-greedy. To see that it is conservative, fix $A,B\subset \N$ with $A<B$ and $0<|A|\le |B|$. If $B>n_1$, let 
		$$
		k_1:=\max_{k \in \N}\{n_k<B\}\qquad \text{and} \qquad A_0:=\{i\in  A:i\le n_{k_1}\}. 
		$$
		Note that $|A\setminus A_0|\le m$ because either the set is empty or $$n_{k_1}<A\setminus A_0<\min B\le n_{{k_1}+1}\le n_{k_1}+m.$$ As $A_0\le n_{k_1}<B$, we have
		$$
		\|\bff_{A}\|\le \|\bff_{A_0}\|+\|\bff_{A\setminus A_0}\|\le \Delta_{oc}\|\bff_{B}\|+ \alpha_1\alpha_2m \|\bff_{B}\|.
		$$
		On the other hand, if $B\not >n_1$, then $A<n_1$. Hence, 
		$$
		\|\bff_{A}\|\le \alpha_1\alpha_2(n_1-1)\|\bff_{B}\|. 
		$$
		Thus, $\B$ is conservative. By \cite[Proposition 5.1]{BBL}, it is strong partially greedy. 
	\end{proof}

	\begin{corollary}\label{corollaryequivforboundedadditive} Let $\n$ be a sequence with gaps. The following are equivalent: 
		\begin{itemize}
			\item $\n$ has bounded additive gaps. 
			\item Every $\n$-strong partially greedy Markushevich basis is strong partially greedy. 
		\end{itemize}
	\end{corollary}

	\section{Examples}\label{sectionexamples}
	\color{black}
	In this section, we consider a family of examples from \cite[Proposition 3.1]{O2015}, which we use throughout the paper. We will use some of the results proved for a modified version of this example was studied in \cite{BB2}, as the proofs hold also in this case with only straightforward modifications. 
	\begin{example}\label{examplendemocratic}
		Suppose $\n$ has arbitrarily large quotient gaps, and write $\n=(n_k)_{k=1}^\infty$ and find $k_1<k_2<...$ such that the sequence $(n_{k_i+1}/n_{k_i})_{i=1}^\infty$ increases without a bound. For $i\in\mathbb N$, write
		$$c_i=\left(\frac{n_{k_i+1}}{n_{k_i}}\right)^{1/4},\;\; m_i=\floor{\sqrt{n_{k_i+1}n_{k_i}}}.$$
		Let $\tilde{m}_i = \sum_{j<i}m_i$ (so that $\tilde{m}_1=0$ and $\tilde{m}_{i+1}=\tilde{m}_i+m_i$ for $i\geq 1$), and let $\X$ be completion of $\mathtt{c}_{00}$ with the norm:  
		
		$$\left\Vert\sum_{j}a_j\be_j\right\Vert = \max\left\lbrace \Vert (a_j)_j\Vert_2, \sup_{i\in\mathbb N}\frac{c_i}{\sqrt{m_i}}\max_{i\leq l\leq m_i}\left\vert \sum_{j=\tilde{m}_i+1}^{\tilde{m}_i+l}a_j\right\vert\right\rbrace.$$
		The canonical unit vector basis $\mathcal B=(\be_i)_{i\in\mathbb N}$ is a monotone Schauder basis with the following properties. 
		\begin{enumerate}[\rm \color{red} a)\color{black}]
			\item \label{aex1}$\B$ is $\n$-quasi-greedy and not quasi-greedy.
			\item  \label{cex1} $\B$ is $\Delta_b$-$\n$-bidemocratic with $\Delta_b\le \sqrt{2}$. 
			
			\item  \label{gex1} $\B$ is not $\n$-order-conservative and hence not $\n$-partially greedy.
			
		\end{enumerate}
	\end{example}

	\begin{proof}
		It is clear from the definition that $\B$ is a monotone Schauder basis. \\
		
		\textbf{Step \ref{aex1} $\n$-quasi-greediness:} See \cite[Proposition 3.1]{O2015}.\\

		\noindent\textbf{Step \ref{cex1} $\n$-bidemocracy:} 
		
		Fix $A, B\subset \N$ with $|A|=|B|=n\in \mathbf{n}$,  $\bfe\in \Psi_A, \bfe' \in \Psi_B$. The argument of the proof \cite[Example 5.2]{BB2} gives 
		\begin{equation*}
			\|\mathbf{1}_{\bfe' B}\|\le 2\sqrt{|B|}.
		\end{equation*}
		On the other hand, since $\|x\|\ge\|x\|_2$ for all $x\in \X$, we have $\|x^*\|\le \|x^*\|_2$ for all $x^*\in \X^*$ with finite support. Hence,
		$$
		\|\bff_{\bfe A}^*\|\le \sqrt{|A|}
		$$
		It follows that 
		$$
		\|\bff_{\bfe A}^*\|\|\bff_{\bfe' B}\|\le \sqrt{2} \sqrt{|A|} \sqrt{|B|}=\sqrt{2}n,
		$$
		so $\B$ is $\Delta_b$-$\n$-bidemocratic with $\Delta_b\le \sqrt{2}$. \\

		\noindent \textbf{Step \ref{gex1} $\n$-partial greediness:}

		For each $i\in \N$, let 
		
		\begin{equation*}
			B_i:=\{\widetilde{m}_i+1,\dots, \widetilde{m}_i+m_i\}. 
		\end{equation*}
		We have 
		\begin{equation}
			\|\mathbf{1}_{B_i}\|\ge \left\vert \frac{c_i}{\sqrt{m_i}}\sum\limits_{j=\widetilde{m}_i+1}^{\widetilde{m}_i+m_i}\be_j^*(\mathbf{1}_{B_i})\right\vert= \sqrt{m_i}c_i=c_i\sqrt{|B_i|} \label{Bi}
		\end{equation}
		Note that 
		$$
		\frac{c_i}{\sqrt{m_i}}\le 1 \qquad\forall i\ge 2. 
		$$
		Thus, $\left(\be_{\widetilde{m}_i+1}\right)_{i\ge 2}$ is isometrically equivalent to the unit vector basis of $\ell_2$. Therefore, given $i\in \N$ there is $n\in \n$ and  $D_i>n>B_i$ such that $|D_i|=|B_i|$ and 
		$$
		\|\mathbf{1}_{D_i}\|=\sqrt{|D_i|}.
		$$
		Since $(c_i)_{i\in \N}$ is unbounded, from this and \eqref{Bi} it follows that $\B$ is not $\n$-order-conservative.

	\end{proof}
	
	\begin{remark}\label{remarkunconditional}\rm If we replace the norm in Example~\ref{examplendemocratic} by the norm 
		$$\left\Vert\sum_{j}a_j\be_j\right\Vert = \max\left\lbrace \Vert (a_j)_j\Vert_2, \sup_{i\in\mathbb N}\frac{c_i}{\sqrt{m_i}} \sum_{j=\tilde{m}_i+1}^{\tilde{m}_i+m_i}\left|a_j\right|\right\rbrace,$$
		the resulting basis is $1$-unconditional, and the proofs of all the steps \ref{cex1} and \ref{gex1} hold with only minor, strightforward modifications. In particular, this shows that even for unconditional bases, $\n$-bidemocracy does not entail $\n$-order-conservativeness. 
	\end{remark}

	\section{Open questions}\label{sectionquestions}
	
	\begin{question}\rm For $\n$ with bounded quotient gaps, we characterize $\n$-strong partially greedy  bases as those that are $\n$-quasi-greedy and $\n$-order-conservative (Proposition~\ref{propositionboundedpartially}). Does this characterization hold for $\n$ with arbitrarily large quotient gaps? If not, is there a characterization of $\n$-strong-partially greedy  bases as those $\n$-quasi-greedy that have another, relatively simple property?
	\end{question}

	\begin{question}
		\rm In Section~\ref{sectionsemi}, we proved that $\n$-semi-greedy Markushevich bases are semi-greedy. Does this implication still hold if we remove the totality  hypothesis? 
	\end{question}
	\newpage

	\section*{Annex: Summary of some important constants}
	\begin{table}[ht]
		\begin{center}
			\begin{tabular}{c c c}\hline
				& & \\
				{\bf Symbol}& {\bf Name of constant} &  {\bf Ref. equation}\\
				& & \\
				\hline
				& & \\
				$\C_{q,t}$ & Quasi-greedy constant & \eqref{q} \\ 
				& & \\
				$\C_{sq,t}$ & Suppression-quasi-greedy constant & \eqref{sq} \\ 
				& & \\
				$\C_{p,t}$ & Partially greedy constant &   \eqref{npartiallygreedy} \\ 
				& & \\
				$\C_{sp,t}$ & Strong partially greedy constant &   \eqref{nstrongpartiallygreedy} \\
				& & \\
				$\mathbf{K}_{s}$ & Suppression unconditionality constant &   \eqref{unc} \\ 
				& & \\
				$\Delta$ & Symmetry for largest coeff. constant &  \eqref{sy} \\ 
				& & \\
				$\Delta_b$ & Bidemocracy constant &   \eqref{bi} \\ 
				& & \\
				$\Delta_{osc}$ & Order-superconservativity constant &   \eqref{consII} \\
				& & \\
				$\Delta_{oc}$ & Order-conservativity constant &   \eqref{consII} \\
			\end{tabular}
		\end{center}
	\end{table}

\end{document}